\numberwithin{equation}{section} %% Comment out for sequentially-numbered
\numberwithin{figure}{section} %% Comment out for sequentially-numbered
\theoremstyle{plain}
 \theoremstyle{definition}
 \newtheorem*{defn*}{Definition}
\newtheorem{thm}{Theorem}[section]
  \theoremstyle{definition}
  \newtheorem{defn}[thm]{Definition}
  \theoremstyle{remark}
  \newtheorem{rem}[thm]{Remark}
  \theoremstyle{plain}
  \newtheorem{lem}[thm]{Lemma}
  \theoremstyle{plain}
  \newtheorem{prop}[thm]{Proposition}
  \theoremstyle{plain}
  \newtheorem{assumption}[thm]{Assumption}
  \theoremstyle{remark}
  \newtheorem{notation}[thm]{Notation}
  \theoremstyle{plain}
  \newtheorem{cor}[thm]{Corollary}
 \theoremstyle{definition}
  \newtheorem{example}[thm]{Example}
\begin{document}

\title{Quantum revivals in two degrees of freedom integrable systems : the
torus case}

\author{Olivier Lablée}

\date{1 September 2010}
\begin{abstract}
The paper deals with the semi-classical behaviour of quantum dynamics
for a semi-classical completely integrable system with two degrees
of freedom near Liouville regular torus. The phenomomenon of wave
packet revivals is demonstrated in this article. The framework of
this paper is semi-classical analysis (limit : $h\rightarrow0$).
For the proofs we use standard tools of real analysis, Fourier analysis
and basic analytic number theory. 
\end{abstract}
\maketitle

\section{Introduction}

\subsection{Motivation}

In quantum physics, on a Riemannian manifold $(M,g)$ the evolution
of an initial state $\psi_{0}\in L^{2}(M)$ is given by the famous
Schrödinger equation :

\begin{eqnarray*}
ih\frac{\partial\psi(t)}{\partial t}=P_{h}\psi(t);\;\,\psi(0)=\psi_{0}.\end{eqnarray*}
Here $h>0$ is the semi-classical parameter and the operator $P_{h}\,:\, D\left(P_{h}\right)\subset L^{2}\left(M\right)\rightarrow L^{2}\left(M\right)$
is $h$-pseudo-differential operator (for example $P_{h}=-\frac{h^{2}}{2}\Delta_{g}+V$).
In the case of dimension 1 or for completely integrable systems, we
can describe the semi-classical eigenvalues of the Hamiltonian $P_{h}$
and by linearity we can write the solutions of the Schrödinger equation.
Nevertheless, the behaviour of the solutions when the times $t$ evolves
in larges times scales remains quite mysterious. 

In dimension 1, the dynamics in the regular case and for elliptic
non-degenerate singularity have been the subject of many research
in physics \textbf{{[}Av-Pe{]}, {[}LAS{]}, {[}Robi1{]}}, \textbf{{[}Robi2{]},
{[}BKP{]}}, \textbf{{[}Bl-Ko{]} }and, more recently in mathematics
\textbf{{[}Co-Ro{]}}, \textbf{{[}Rob{]}, {[}Pau1{]}}, \textbf{{[}Pau2{]},
{[}Lab2{]}}. The strategy to understand the long times behaviour of
dynamics is to use the spectrum of the operator $P_{h}$. In the regular
case, the spectrum of $P_{h}$ is given by the famous Bohr-Sommerfeld
rules (see for example \textbf{{[}He-Ro{]},} \textbf{{[}Ch-VuN{]}},
\textbf{{[}Col{]}}) : in first approximation, the spectrum of $P_{h}$
in a compact set is a sequence of real numbers with a gap of size
$h.$ The classical trajectories are periodic and supported on elliptic
curves. Always in dimension 1, in the case of hyperbolic singularity
we have a non-periodic trajectory. The spectrum near this singularity
is more complicated than in the regular case. In \textbf{{[}Lab3{]}
}we have an explicit description of the spectrum for an one-dimesional
pesudo-differential operator near a hyperbolic non-degenerate singularity.
The article \textbf{{[}Lab4{]}} deals with the quantum dynamics for
the hyperbolic case. So, in dimension 1, we get the full and fractionnals
revivals phenomenon (see \textbf{{[}Av-Pe{]}, {[}LAS{]}, {[}Robi1{]}},
\textbf{{[}Robi2{]}, {[}BKP{]}}, \textbf{{[}Bl-Ko{]}, {[}Co-Ro{]}},
\textbf{{[}Rob{]}, {[}Pau1{]} }for the elliptic case and see \textbf{{[}Lab4{]}
}or\textbf{ {[}Pau2{]}} for the the hyperbolic case). For an initial
wave packets localized in energy, the dynamics follows the classical
motion during short time, and, for large time, a new period $T_{rev}$
for the quantum dynamics appears : the initial wave packets form again
at $t=T_{rev}$. 

Physicists R. Blhum, A. Kostelecky and B. Tudose are interested in
the case of the dimension 2 (see \textbf{{[}BKT{]}}). Our paper presents
some accurate results on the time evolution for a generical semi-classical
completely integrable system of dimension 2 with mathematical proofs.

\subsection{Results and paper organization}

Here the quantum Hamiltonian is of the type $P_{h}=F\left(P_{1},P_{2}\right)$
where $F$ is a real polynomial of two variables and $P_{1},P_{2}$
are semi-classical one dimensional harmonic oscillators (see section
2 for details). By a diffeomorphism this Hamiltonian is less particular
than it seems to be, since it gives the spectrum of any completely
integrable system with two degrees of freedom near regular torus or
around elliptic singularity \textbf{{[}VuN{]}}. Therefore, the Hamiltonian
study leads to a study more or less general but which is not obvious
in dimension 2. In this paper, we consider an initial state $\psi_{0}$
localized near some regular Liouville torus of energies $\left(E_{1},E_{2}\right)$
and we study the associated quantum dynamics. To understand the behaviour
of dynamics, we interested in the evolution of the autocorrelation
: \[
\mathbf{a}(t)=\left|\left\langle \psi(t),\psi_{0}\right\rangle _{L^{2}(\mathbb{R}^{2})}\right|.\]
Due to the simple nature of the Hamiltonian operator the autocorrelation
function can be write as a serie :\[
\mathbf{a}(t)=\left|\sum_{n=0}^{+\infty}\sum_{m=0}^{+\infty}\left|a_{n,m}\right|^{2}e^{-i\frac{t}{h}F\left(\tau_{n},\mu_{m}\right)}\right|,\]
where $\tau_{n}=\omega_{1}h\left(n+\frac{1}{2}\right),\;\mu_{m}=\omega_{2}h\left(m+\frac{1}{2}\right)$
are eigenvalues of the one-dimensionnal harmonic oscillators $P_{1},P_{2}$.
The sequence $\left(a_{n,m}\right)_{n,m}$ is just the decomposition
of the initial vector $\psi_{0}$ on the Hermitte's eigenbasis of
$L^{2}(\mathbb{R}^{2})$. 

Most of the paper (section 3 and 4) consists in estimating and analyzing
the function $\mathbf{a}(t)$ for large times scales ($t\leq1/h^{s}$
with various $s>0$). We use Taylor's formula to expand the phase
term $tF\left(\tau_{n},\mu_{m}\right)/h$ in the variables $(n,m);$
first in linear order (section 3), then to quadratic order (section
4).

In the section 3, we study the linear approximation $\mathbf{a_{1}}(t)$
(see definition 3.6) of the autocorrelation function, valid up on
a time scale $\left[0,1/h^{\alpha}\right]$ where $0<\alpha<1$. The
dynamics depends strongly on the diophantin properties of the classical
periods $T_{cl_{1}},T_{cl_{2}}$. If the fraction $T_{cl_{1}}/T_{cl_{2}}$
is commensurate (in this case the classical Hamiltonian flow is \foreignlanguage{english}{$T_{cl}$}-periodic)
we can describe accurately the behaviour of the dynamics on a classical
period $\left[0,T_{cl}\right]$ (see theorem 3.12). In opposite, if
the fraction $T_{cl_{1}}/T_{cl_{2}}$ is a bad approximation by rationals
(we suppose $T_{cl_{1}}/T_{cl_{2}}$ is Roth number) the autocorrelation
function collapse in the set $\left]0,T_{s}\right]$ where $T_{s}$
is order of $1/h^{s}$ (see theorem 3.24). For large time we use the
continuous fraction expansion of $T_{cl_{1}}/T_{cl_{2}}$ to analyze
some possible periods for linear approximation $\mathbf{a_{1}}(t)$
(see theorem 3.35).

In the last section, we use the quadradic approximation $\mathbf{a_{2}}(t)$
(see definition 4.6) of the autocorrelation function, valid up on
a time scale $\left[0,1/h^{\beta}\right]$ where $\beta>1$. In this
quadradic approximation appear three revivals periods $T_{rev_{1}},T_{rev_{2}}$
and $T_{rev_{12}}$ of order $1/h$ depending on the Hessian matrix
of the function $F$ at the point $\left(E_{1},E_{2}\right)$. If
we suppose $T_{rev_{1}},T_{rev_{2}}$ and $T_{rev_{12}}$ are commensurate,
we can proove and analyze the revivals phenomenon (see theorem 4.16
and corollary 4.17). In the last subsection we compute the modulus
of the revival coefficients (see theorem 4.19).

\section{General points}

\subsection{Some basic facts on semi-classical analysis }

To explain quickly the philosophy of semi-classical analysis, starts
by an example : for a real number $E>0$; the equation\[
-\frac{h^{2}}{2}\Delta_{g}\varphi=E\varphi\]
(where $\Delta_{g}$ denotes the Laplace-Beltrami operator on a Riemaniann
manifold $(M,g)$) admits the eigenvectors $\varphi_{k}$ as solution
if\[
-\frac{h^{2}}{2}\lambda_{k}=E.\]
Hence if $h\rightarrow0^{+}$ then $\lambda_{k}\rightarrow+\infty$.
So there exists a correspondence between the semi-classical limit
($h\rightarrow0^{+}$) and large eigenvalues. 

The asympotic of large eigenvalues for the Laplace-Beltrami operator
$\Delta_{g}$ on a Riemaniann manifold $(M,g)$, or more generally
for a pseudo-differential operator $P_{h}$, is linked to a symplectic
geometry : the phase space geometry. This is the same phenomenon between
quantum mechanics (spectrum, operators algebra) and classical mechanics
(length of periodic geodesics, symplectic geometry). For more details
see for example the survey\textbf{ {[}Lab1{]}}.

\subsection{Quantum dynamics and autocorrelation function}

For a quantum Hamiltonian $P_{h}\,:\, D\left(P_{h}\right)\subset\mathcal{H}\rightarrow\mathcal{H}$,
where $\mathcal{H}$ is a Hilbert space, the Schrödinger dynamics
is governed by the Schrödinger equation : \begin{eqnarray*}
ih\frac{\partial\psi(t)}{\partial t}=P_{h}\psi(t).\end{eqnarray*}
With the functional calculus, we can reformulate this equation with
the unitary group $U(t)=\left\{ e^{-i\frac{t}{h}P_{h}}\right\} _{t\in\mathbb{R}}.$
Indeed, for a initial state $\psi_{0}\in\mathcal{H}$, the evolution
is given by : \begin{eqnarray*}
\psi(t)=U(t)\psi_{0}\in\mathcal{H}.\end{eqnarray*}
We now introduce a simple tool to understand the behaviour of the
vector \foreignlanguage{english}{$\psi(t)$} : this tool is a quantum
analog of the Poincaré return function :
\begin{defn*}
The quantum return functions of the operator $P_{h}$ and for an initial
state $\psi_{0}$ is defined by : \[
\mathbf{r}(t):=\left\langle \psi(t),\psi_{0}\right\rangle _{\mathcal{H}};\]
and the autocorrelation function is defined by :\[
\mathbf{a}(t):=\left|\mathbf{r}(t)\right|=\left|\left\langle \psi(t),\psi_{0}\right\rangle _{\mathcal{H}}\right|.\]

\end{defn*}
The previous function measures the return on the initial state $\psi_{0}$.
This function is the overlap of the time dependent quantum state $\psi(t)$
with the initial state $\psi_{0}.$ Since the initial state $\psi_{0}$
is normalized, the autocorrelation function takes values in the compact
set $[0,1].$

\subsection{The Hamiltonian of our model}

For our study, the quantum Hamiltonian is the operator :\[
P_{h}:=F\left(P_{1},P_{2}\right)\]
where $F$ is a polynomial of $\mathbb{R}\left[X,Y\right]$ which
does not depend on the paramater $h$; $P_{1}$ and $P_{2}$ are the
Weyl-quantization of the classical one dimensional harmonic oscillator
: \[
p_{j}\left(x_{1},\xi_{1},x_{2},\xi_{2}\right)=\omega_{j}\left(x_{j}^{2}+\xi_{j}^{2}\right)/2\]
 with $\omega_{1},\omega_{2}>0$. It is well know that the Hermitte
functions $\left(e_{n,m}\right)_{n,m}:=\left(e_{n}\otimes e_{m}\right)_{n,m\in\mathbb{N}^{2}}$
is a Hilbert basis of the space $L^{2}(\mathbb{R}^{2})$. Let us consider
for all integers ($n,m)$ the eigenvalues of $P_{1}$ and $P_{2}$
: \[
\tau_{n}:=\omega_{1}h\left(n+\frac{1}{2}\right),\;\mu_{m}:=\omega_{2}h\left(m+\frac{1}{2}\right);\]
so, we get immediatly that for all integers ($n,m)$ \foreignlanguage{english}{\[
F\left(P_{1},P_{2}\right)\left(e_{n}\otimes e_{m}\right)=F\left(\tau_{n},\mu_{m}\right)\left(e_{n}\otimes e_{m}\right).\]
}

\subsection{The autocorrelation function rewritten in a eigenbasis}

Now, for a initial vector $\psi_{0}={\displaystyle \sum_{n,m\in\mathbb{N}^{2}}a_{n,m}e_{n,m}}$
we have for all $t\geq0$

\begin{eqnarray*}
\psi(t)=\left(e^{-i\frac{t}{h}F\left(P_{1},P_{2}\right)}\right)\left({\displaystyle \sum_{n,m\in\mathbb{N}^{2}}a_{n,m}e_{n,m}}\right)={\displaystyle \sum_{n,m\in\mathbb{N}^{2}}a_{n,m}e^{-i\frac{t}{h}F\left(\tau_{n},\mu_{m}\right)}e_{n,m}}\end{eqnarray*}
so, for all $t\geq0$ we obtain

\[
\mathbf{r}(t)=\sum_{n=0}^{+\infty}\sum_{m=0}^{+\infty}\left|a_{n,m}\right|^{2}e^{-i\frac{t}{h}F\left(\tau_{n},\mu_{m}\right)}\]
and\[
\mathbf{a}(t)=\left|\sum_{n=0}^{+\infty}\sum_{m=0}^{+\infty}\left|a_{n,m}\right|^{2}e^{-i\frac{t}{h}F\left(\tau_{n},\mu_{m}\right)}\right|.\]
The aim of this paper is to study this sum, but unfortunately this
function is too difficult to be understood immediatly.

\subsection{Strategy to study the autocorrelation function }

The strategy for simplify the sum function $t\mapsto\mathbf{a}(t)$,
performed by the physicists (\textbf{{[}Av-Pe{]}, {[}LAS{]}, {[}Robi1{]}},
\textbf{{[}Robi2{]}, {[}BKP{]}}, \textbf{{[}Bl-Ko{]}}) is the following
:
\begin{enumerate}
\item we define a initial vector $\psi_{0}={\displaystyle {\displaystyle \sum_{n,m\in\mathbb{N}^{2}}a_{n,m}e_{n,m}}}$
localized near some regular Liouville torus of energies $\left(E_{1},E_{2}\right)$
: consequently the sequence \foreignlanguage{english}{$(a_{n,m})_{n,m\in\mathbb{N}^{2}}$}
is localized close to a pair of quantum numbers $n_{0},m_{0}$ (depends
on $h$ and on the Liouville torus $(E_{1},E_{2})$.
\item Next, the idea is to expand by a Taylor formula's the eigenvalues
\foreignlanguage{english}{$F\left(\tau_{n},\mu_{m}\right)$} around
the Liouville torus $(E_{1},E_{2})$ :\[
F\left(\tau_{n},\mu_{m}\right)=\]
\[
F\left(\tau_{n_{0}},\mu_{m_{0}}\right)+h\omega_{1}\left(n-n_{0}\right)\frac{\partial F}{\partial X}\left(\tau_{n_{0}},\mu_{m_{0}}\right)+h\omega_{2}\left(m-m_{0}\right)\frac{\partial F}{\partial Y}\left(\tau_{n_{0}},\mu_{m_{0}}\right)\]
\[
+\frac{1}{2}\omega_{1}^{2}h^{2}\left(n-n_{0}\right)^{2}\frac{\partial^{2}F}{\partial X^{2}}\left(\tau_{n_{0}},\mu_{m_{0}}\right)+\frac{1}{2}\omega_{2}^{2}h^{2}\left(m-m_{0}\right)^{2}\frac{\partial^{2}F}{\partial Y^{2}}\left(\tau_{n_{0}},\mu_{m_{0}}\right)\]
\[
+\omega_{1}\omega_{2}h^{2}\left(n-n_{0}\right)\left(m-m_{0}\right)\frac{\partial^{2}F}{\partial X\partial Y}\left(\tau_{n_{0}},\mu_{m_{0}}\right)+\cdots\]
(here $\tau_{n_{0}},\mu_{m_{0}}$ is the closest pair of eigenvalue
to the pair $E_{1},E_{2}$). As a consequence we get for all $t\geq0$\[
\mathbf{a}(t)=\left|\sum_{n,m\in\mathbb{N}^{2}}\left|a_{n,m}\right|^{2}e^{-it\left[\omega_{1}\left(n-n_{0}\right)\frac{\partial F}{\partial X}\left(\tau_{n_{0}},\mu_{m_{0}}\right)+\cdots+\omega_{1}\omega_{2}h\left(n-n_{0}\right)\left(m-m_{0}\right)\frac{\partial^{2}F}{\partial X\partial Y}\left(\tau_{n_{0}},\mu_{m_{0}}\right)+\cdots\right]}\right|.\]

\item And, for small values of $t$, the first approximation of the autocorrelation
function $\mathbf{a}(t)$ is the function \[
\mathbf{a}_{1}(t):=\left|\sum_{n,m\in\mathbb{N}^{2}}\left|a_{n,m}\right|^{2}e^{-it\left[\omega_{1}\left(n-n_{0}\right)\frac{\partial F}{\partial X}\left(\tau_{n_{0}},\mu_{m_{0}}\right)+\omega_{2}\left(m-m_{0}\right)\frac{\partial F}{\partial Y}\left(\tau_{n_{0}},\mu_{m_{0}}\right)\right]}\right|;\]
and for larger values of $t$, the order 2-approximation is given
by\[
\mathbf{a}_{2}(t):=\left|\sum_{n,m\in\mathbb{N}^{2}}\left|a_{n,m}\right|^{2}e^{-it\left[\omega_{1}\left(n-n_{0}\right)\frac{\partial F}{\partial X}\left(\tau_{n_{0}},\mu_{m_{0}}\right)+\cdots+\omega_{1}\omega_{2}h\left(n-n_{0}\right)\left(m-m_{0}\right)\frac{\partial^{2}F}{\partial X\partial Y}\left(\tau_{n_{0}},\mu_{m_{0}}\right)\right]}\right|.\]

\end{enumerate}
In section 3, we study in details the function $t\mapsto\mathbf{a}_{1}(t)$
and $t\mapsto\mathbf{a}_{2}(t)$ in section 4.

\subsection{Choice of an initial state }

Let us define an initial vector $\psi_{0}={\displaystyle \sum_{n,m\in\mathbb{N}^{2}}a_{n,m}e_{n,m}}$
localized near a regular Liouville torus of energies $E:=\left(E_{1},E_{2}\right)$
where $E_{1}\in\left[0,1\right]$ and $E_{2}\in\left[0,1\right]$.
\begin{defn}
Let us consider the quantum integers $n_{0}=n_{0}(h,E_{1})$ and $m_{0}=m_{0}(h,E_{2})$
defined by\[
n_{0}:=\textrm{arg}\min_{n}\left|\tau_{n}-E_{1}\right|;\; m_{0}:=\textrm{arg}\min_{m}\left|\mu_{m}-E_{2}\right|.\]
\end{defn}
\begin{rem}
Without loss of generality, we may suppose that the integers $n_{0}$
and $m_{0}$ are unique.
\end{rem}
The integer $n_{0}$ (resp. $m_{0}$) is the eigenvalues index of
the operator from the family $P_{1}$ (resp. $P_{2}$) the closest
to the real number $E_{1}$ (resp. $E_{2}$). Since the spectral gap
of $P_{1}$ (resp. $P_{2}$) is equal to $\omega_{1}h$ (resp. $\omega_{2}h$
) we have, for $h\rightarrow0$ : $n_{0}\sim\frac{E_{1}}{\omega_{1}h};\; m_{0}\sim\frac{E_{2}}{\omega_{2}h}.$ 

Now, we can give definition of our initial state :
\begin{defn}
Let us consider the sequence $\left(a_{n,m}\right)_{n,m\in\mathbb{Z}^{2}}=\left(a_{n,m}(h)\right)_{n,m\in\mathbb{Z}^{2}}$
defined by :\[
a_{n,m}:=K_{h}\chi\left(\frac{\tau_{n}-\tau_{n_{0}}}{h^{\delta_{1}^{\prime}}},\frac{\mu_{m}-\mu_{m_{0}}}{h^{\delta_{2}^{\prime}}}\right)=K_{h}\chi\left(\omega_{1}\frac{n-n_{0}}{h^{\delta_{1}^{\prime}-1}},\omega_{2}\frac{m-m_{0}}{h^{\delta_{2}^{\prime}-1}}\right);\]
where the function $\chi$ is non null, non-negative and belong ot
the space $\mathcal{S}(\mathbb{R}^{2})$. The parameters $\left(\delta_{1}^{\prime},\delta_{2}^{\prime}\right)\in$$]0,1[^{2}.$
We also denote\[
K_{h}:=\left\Vert \chi\left(\frac{\tau_{n}-\tau_{n_{0}}}{h^{\delta_{1}^{\prime}}},\frac{\mu_{m}-\mu_{m_{0}}}{h^{\delta_{2}^{\prime}}}\right)\right\Vert _{\ell^{2}(\mathbb{N}^{2})}.\]

\end{defn}
Let us detail this choice :
\begin{enumerate}
\item the term \foreignlanguage{english}{$\chi\left(\frac{\tau_{n}-\tau_{n_{0}}}{h^{\delta_{1}^{\prime}}},\frac{\mu_{m}-\mu_{m_{0}}}{h^{\delta_{2}^{\prime}}}\right)$
localize around the torus $\left(E_{1},E_{2}\right)$ (for technical
reason we localize around the }closest eigenvalues\foreignlanguage{english}{
to $\left(E_{1},E_{2}\right)$.}
\selectlanguage{english}%
\item Constants\foreignlanguage{english}{ $\delta_{1}^{\prime}$ and $\delta_{2}^{\prime}$
are coefficients for dilate the function $\chi$ (the reason to take
$0<\delta_{j}^{\prime}<1$ is the following : it is the unique way
to have a non-trivial localization (not tend to $\left\{ 0\right\} $)
and a localization larger the spectral $h^{\delta_{j}^{\prime}}\gg h$). }
\end{enumerate}
\selectlanguage{english}%
So, clearly the sequence $\left(a_{n,m}\right)_{n,m}\in\ell^{2}(\mathbb{Z}^{2})$.
Now, let us evaluate the constant of normalization $K_{h}$; start
by the :
\begin{lem}
For a function $\varphi\in\mathcal{S}(\mathbb{R}^{2})$ and $(\varepsilon_{1},\varepsilon_{2})\in\left]0,1\right]^{2}$
then we have uniformly for $(u_{1},u_{2})\in\mathbb{R}^{2}$ : \[
\sum_{\ell,s\in\mathbb{Z}^{2},\,\left|\ell+u_{1}\right|\geq\frac{1}{2},\,\left|s+u_{2}\right|\geq\frac{1}{2}}\left|\varphi\left(\frac{\ell+u_{1}}{\varepsilon_{1}},\frac{s+u_{2}}{\varepsilon_{2}}\right)\right|=O(\varepsilon_{1}^{\infty}+\varepsilon_{2}^{\infty}).\]
\end{lem}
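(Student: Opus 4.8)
The plan is to reduce the two–dimensional sum to a product of one–dimensional sums and then exploit rapid decay of Schwartz functions, the point being that each summation index is bounded away from its natural center by at least $1/2$.

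\textbf{Step 1: Separate the variables by a Schwartz tensor bound.} For $\varphi\in\mathcal{S}(\mathbb{R}^{2})$ and any $N\geq 0$ there is a constant $C_{N}$ with
\[
\left|\varphi(x,y)\right|\leq C_{N}\,(1+|x|)^{-N}(1+|y|)^{-N}\qquad\text{for all }(x,y)\in\mathbb{R}^{2}.
\]
(This is immediate from the definition of the Schwartz seminorms: $(1+|x|)^{N}(1+|y|)^{N}|\varphi(x,y)|$ is bounded on $\mathbb{R}^{2}$.) Applying this with $x=(\ell+u_{1})/\varepsilon_{1}$ and $y=(s+u_{2})/\varepsilon_{2}$, the sum in question is bounded by
\[
C_{N}\left(\sum_{\ell\in\mathbb{Z},\,|\ell+u_{1}|\geq 1/2}\Bigl(1+\tfrac{|\ell+u_{1}|}{\varepsilon_{1}}\Bigr)^{-N}\right)\left(\sum_{s\in\mathbb{Z},\,|s+u_{2}|\geq 1/2}\Bigl(1+\tfrac{|s+u_{2}|}{\varepsilon_{2}}\Bigr)^{-N}\right),
\]
so it suffices to estimate a single one–dimensional sum uniformly in $u\in\mathbb{R}$, say
\[
S(\varepsilon,u):=\sum_{\ell\in\mathbb{Z},\,|\ell+u|\geq 1/2}\Bigl(1+\tfrac{|\ell+u|}{\varepsilon}\Bigr)^{-N},
\]
and show $S(\varepsilon,u)=O(\varepsilon^{N-1})$ uniformly in $u$; choosing $N$ large then gives the claimed $O(\varepsilon_{1}^{\infty}+\varepsilon_{2}^{\infty})$ (a product of two small factors is certainly $O$ of the sum of arbitrary powers).

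\textbf{Step 2: Estimate the one–dimensional sum uniformly in $u$.} Fix the unique integer $\ell_{0}=\ell_{0}(u)$ with $|\ell_{0}+u|\leq 1/2$ (if $u$ is a half-integer pick either of the two; it only costs a bounded extra term). Write $r:=\ell_{0}+u\in[-1/2,1/2]$. For $\ell\neq\ell_{0}$ in the sum we have $\ell+u=(\ell-\ell_{0})+r$ with $k:=\ell-\ell_{0}\in\mathbb{Z}\setminus\{0\}$, hence $|\ell+u|\geq |k|-1/2\geq |k|/2$. Therefore, using $1+t\geq t$,
\[
S(\varepsilon,u)\leq \sum_{k\in\mathbb{Z}\setminus\{0\}}\Bigl(\tfrac{|k|}{2\varepsilon}\Bigr)^{-N}=2^{N}\varepsilon^{N}\cdot 2\sum_{k=1}^{\infty}k^{-N}=2^{N+1}\varepsilon^{N}\zeta(N),
\]
valid for $N\geq 2$, with a bound independent of $u$. (One may also just bound the tail sum by an integral $\int_{1/2}^{\infty}(t/\varepsilon)^{-N}\,dt=O(\varepsilon^{N})$, avoiding $\zeta$ entirely.) This is even better than the $O(\varepsilon^{N-1})$ we needed.

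\textbf{Step 3: Conclude.} Combining Steps 1 and 2, for every $N\geq 2$ the original double sum is $O(\varepsilon_{1}^{N}\varepsilon_{2}^{N})$ uniformly in $(u_{1},u_{2})\in\mathbb{R}^{2}$, and in particular, since $\varepsilon_{1}^{N}\varepsilon_{2}^{N}\leq \varepsilon_{1}^{N}+\varepsilon_{2}^{N}$ on $]0,1]^{2}$, it is $O(\varepsilon_{1}^{\infty}+\varepsilon_{2}^{\infty})$. I do not anticipate a genuine obstacle here; the only mild subtlety is bookkeeping the shift by the nearest integer so that the exclusion condition $|\ell+u|\geq 1/2$ translates cleanly into $|\ell-\ell_{0}|\geq 1$, which is exactly what makes the geometric-type decay kick in uniformly in $u$. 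Everything else is the standard "Schwartz functions have summable rescaled lattice translates" estimate, done separately in each coordinate.
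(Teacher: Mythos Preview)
Your proof is correct. The approach differs from the paper's in one respect worth noting: you first separate variables via the product bound $|\varphi(x,y)|\le C_N(1+|x|)^{-N}(1+|y|)^{-N}$ and then estimate each one-dimensional sum directly, obtaining the stronger product estimate $O(\varepsilon_1^{N}\varepsilon_2^{N})$. The paper instead keeps the double sum intact and uses the multiply--divide trick
\[
\left|\varphi\!\left(\tfrac{\ell+u_1}{\varepsilon_1},\tfrac{s+u_2}{\varepsilon_2}\right)\right|
=\left(\tfrac{\ell+u_1}{\varepsilon_1}\right)^{2N}\left|\varphi(\cdot,\cdot)\right|\cdot\frac{\varepsilon_1^{2N}}{(\ell+u_1)^{2N}}
\le 4^{N}\varepsilon_1^{2N}\,\left|\psi\!\left(\tfrac{\ell+u_1}{\varepsilon_1},\tfrac{s+u_2}{\varepsilon_2}\right)\right|,
\]
with $\psi(x,y)=x^{2N}\varphi(x,y)\in\mathcal{S}(\mathbb{R}^2)$, and then invokes the preliminary uniform $O(1)$ bound on the lattice sum of $|\psi|$; this yields $O(\varepsilon_1^{2N})$ and, symmetrically, $O(\varepsilon_2^{2N})$. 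Both arguments hinge on the same point---that $|\ell+u_1|\ge 1/2$ lets one extract an arbitrary power of $\varepsilon_1$---but your route avoids the auxiliary $O(1)$ lemma at the cost of the tensor bound, while the paper's route avoids separating variables at the cost of that lemma. Either way the conclusion $O(\varepsilon_1^{\infty}+\varepsilon_2^{\infty})$ follows immediately.
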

\begin{proof}
We see easily that, uniformly for $(u_{1},u_{2})\in\mathbb{R}^{2}$
we have \[
\sum_{\ell,s\in\mathbb{Z}^{2},\,\left|\ell+u_{1}\right|\geq\frac{1}{2},\,\left|s+u_{2}\right|\geq\frac{1}{2}}\left|\varphi\left(\frac{\ell+u_{1}}{\varepsilon_{1}},\frac{s+u_{2}}{\varepsilon_{2}}\right)\right|=O(1).\]
Next\[
\sum_{\ell,s\in\mathbb{Z}^{2},\,\left|\ell+u_{1}\right|\geq\frac{1}{2},\,\left|s+u_{2}\right|\geq\frac{1}{2}}\left|\varphi\left(\frac{\ell+u_{1}}{\varepsilon_{1}},\frac{s+u_{2}}{\varepsilon_{2}}\right)\right|\]
\[
\leq\sum_{\ell,s\in\mathbb{Z}^{2},\,\left|\ell+u_{1}\right|\geq\frac{1}{2},\,\left|s+u_{2}\right|\geq\frac{1}{2}}\left(\frac{\ell+u_{1}}{\varepsilon_{1}}\right)^{2N}\left|\varphi\left(\frac{\ell+u_{1}}{\varepsilon_{1}},\frac{s+u_{2}}{\varepsilon_{2}}\right)\right|\frac{\varepsilon_{1}^{2N}}{(\ell+u_{1})^{2N}}\]
\[
\leq\varepsilon_{1}^{2N}4^{N}\sum_{\ell,s\in\mathbb{Z}^{2}}\left(\frac{\ell+u_{1}}{\varepsilon_{1}}\right)^{2N}\left|\varphi\left(\frac{\ell+u_{1}}{\varepsilon_{1}},\frac{s+u_{2}}{\varepsilon_{2}}\right)\right|.\]
And, similary we have\[
\sum_{\ell,s\in\mathbb{Z}^{2},\,\left|\ell+u_{1}\right|\geq\frac{1}{2},\,\left|s+u_{2}\right|\geq\frac{1}{2}}\left|\varphi\left(\frac{\ell+u_{1}}{\varepsilon_{1}},\frac{s+u_{2}}{\varepsilon_{2}}\right)\right|\]

\[
\leq\varepsilon_{2}^{2N}4^{N}\sum_{\ell,s\in\mathbb{Z}^{2}}\left(\frac{s+u_{2}}{\varepsilon_{2}}\right)^{2N}\left|\varphi\left(\frac{\ell+u_{1}}{\varepsilon_{1}},\frac{s+u_{2}}{\varepsilon_{2}}\right)\right|.\]
To conclude the proof, we apply that to the functions $\psi(x,y):=x^{2N}\varphi(x,y)$
and $\psi(x,y):=y^{2N}\varphi(x,y)$.
\end{proof}
An obvious consequence of this lemma is the following result :
\begin{prop}
We get \[
K_{h}=\frac{1}{\sqrt{\mathfrak{F}\left(\chi^{2}\right)(0,0)}h^{\frac{\delta_{1}^{\prime}+\delta_{2}^{\prime}-2}{2}}}+O\left(h^{\infty}\right);\]
hence $\left\Vert a_{n,m}\right\Vert _{\ell^{2}(\mathbb{N}^{2})}=1+O(h^{\infty}).$\end{prop}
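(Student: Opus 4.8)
The plan is to interpret the normalization sum as a Riemann-type sum and extract its leading asymptotics via the Poisson summation formula. Writing out the definition of $K_h$, we have
\[
K_h^{-2}=\sum_{n,m\in\mathbb{N}^2}\left|\chi\!\left(\omega_1\frac{n-n_0}{h^{\delta_1'-1}},\omega_2\frac{m-m_0}{h^{\delta_2'-1}}\right)\right|^2 .
\]
First I would replace the sum over $\mathbb{N}^2$ by a sum over $\mathbb{Z}^2$. The difference is exactly the tail sum over indices with $n<0$ or $m<0$; since $n_0\sim E_1/(\omega_1 h)$ and $m_0\sim E_2/(\omega_2 h)$, such indices satisfy $|n-n_0|\gtrsim n_0$, hence the rescaled argument has modulus $\gtrsim h^{-1}/h^{\delta_1'-1}=h^{\delta_1'-2}\to\infty$, and rapid decay of $\chi^2\in\mathcal{S}(\mathbb{R}^2)$ makes this tail $O(h^\infty)$ — this is precisely the content of Lemma~2.5 applied with $\varphi=\chi^2$ and $\varepsilon_j=h^{\delta_j'-1}\cdot(\text{const})$, up to the trivial reindexing $\ell=n-n_0$, $s=m-m_0$.

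Next I would apply Poisson summation to the full $\mathbb{Z}^2$ sum. Set $\varphi=\chi^2$ and let $g(x,y)=\varphi(\omega_1 h^{1-\delta_1'}x,\ \omega_2 h^{1-\delta_2'}y)$, so that $K_h^{-2}+O(h^\infty)=\sum_{(\ell,s)\in\mathbb{Z}^2} g(\ell,s)$. Poisson summation gives $\sum_{(\ell,s)\in\mathbb{Z}^2} g(\ell,s)=\sum_{(k,j)\in\mathbb{Z}^2}\widehat{g}(k,j)$, and by the scaling law for the Fourier transform,
\[
\widehat{g}(k,j)=\frac{1}{\omega_1\omega_2\,h^{2-\delta_1'-\delta_2'}}\,\widehat{\varphi}\!\left(\frac{k}{\omega_1 h^{1-\delta_1'}},\ \frac{j}{\omega_2 h^{1-\delta_2'}}\right).
\]
The $(k,j)=(0,0)$ term contributes $\widehat{\varphi}(0,0)\,\omega_1^{-1}\omega_2^{-1}h^{\delta_1'+\delta_2'-2}$, which (absorbing the harmless constant $\omega_1\omega_2$ into the notation $\mathfrak F$, or noting the paper's $\mathfrak F$ already includes it) is the claimed leading term $\mathfrak F(\chi^2)(0,0)\,h^{\delta_1'+\delta_2'-2}$. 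Every nonzero frequency $(k,j)$ has at least one coordinate of the argument of $\widehat\varphi$ of size $\gtrsim h^{-(1-\delta_j')}\to\infty$, so by Schwartz decay of $\widehat\varphi$ each such term, and their sum, is $O(h^\infty)$. Therefore $K_h^{-2}=\mathfrak F(\chi^2)(0,0)\,h^{\delta_1'+\delta_2'-2}\bigl(1+O(h^\infty)\bigr)$, and taking reciprocal square roots and expanding yields the stated formula for $K_h$ with an $O(h^\infty)$ error.

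Finally, the assertion $\|a_{n,m}\|_{\ell^2(\mathbb N^2)}=1+O(h^\infty)$ is immediate: by construction $a_{n,m}=K_h\,\chi(\cdots)$, so $\|a_{n,m}\|_{\ell^2}^2=K_h^2\sum_{n,m}|\chi(\cdots)|^2=K_h^2\cdot K_h^{-2}=1$ exactly when the sum is over $\mathbb N^2$ with the same truncation used to define $K_h$; the $O(h^\infty)$ in the displayed estimate is just the residue of the $\mathbb Z^2$-to-$\mathbb N^2$ replacement carried through both factors. The only genuinely delicate point is bookkeeping the constant: one must make sure the factor $\omega_1\omega_2$ coming from the rescaling is consistent with whatever normalization of the Fourier transform $\mathfrak F$ is fixed elsewhere; I expect the paper's $\mathfrak F$ is defined precisely so that this works out, so I would simply state that convention and proceed. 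There is no serious analytic obstacle — the estimates are routine consequences of Lemma~2.5 and Poisson summation.
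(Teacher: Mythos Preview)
Your argument is essentially the same as the paper's: apply Poisson summation to the full $\mathbb{Z}^2$ sum, isolate the zero-frequency term, kill the nonzero frequencies and the $\mathbb{Z}^2\setminus\mathbb{N}^2$ tail by Schwartz decay, and then invert. The only slip is a label: the decay lemma you invoke is Lemma~2.4 in the paper's numbering (Lemma~2.5 is the proposition you are proving); and as you yourself flag, the paper silently drops the Jacobian factor $\omega_1\omega_2$ in the Poisson step, so your care there is warranted but does not change the method.
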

\begin{proof}
By the Poisson formula and the lemma above we get the equality :\textit{\[
\sum_{n,m\in\mathbb{Z}^{2}}\chi^{2}\left(\omega_{1}\frac{n-n_{0}}{h^{\delta_{1}^{\prime}-1}},\omega_{2}\frac{m-m_{0}}{h^{\delta_{2}^{\prime}-1}}\right)=h^{\delta_{1}^{\prime}+\delta_{2}^{\prime}-2}\sum_{\ell,s\in\mathbb{Z}^{2}}\mathfrak{F}\left(\chi^{2}\right)\left(-\ell\frac{h^{\delta_{1}^{\prime}-1}}{\omega_{1}},-s\frac{h^{\delta_{2}^{\prime}-1}}{\omega_{2}}\right)\]
\[
=h^{\delta_{1}^{\prime}+\delta_{2}^{\prime}-2}\left[\mathfrak{F}\left(\chi^{2}\right)(0,0)+\sum_{\ell,s\in\mathbb{\mathbb{Z}}^{2},\,\left|\ell\right|+\left|s\right|\geq1}\mathfrak{F}\left(\chi^{2}\right)\left(-\ell\frac{h^{\delta_{1}^{\prime}-1}}{\omega_{1}},-s\frac{h^{\delta_{2}^{\prime}-1}}{\omega_{2}}\right)\right]\]
\[
=h^{\delta_{1}^{\prime}+\delta_{2}^{\prime}-2}\mathfrak{F}\left(\chi^{2}\right)(0,0)+O\left(h^{\infty}\right).\]
}Now, with the basic equality\textit{\[
\sum_{n,m\in\mathbb{N}^{2}}\chi^{2}\left(\frac{n-n_{0}}{h^{\delta_{1}^{\prime}-1}},\frac{m-m_{0}}{h^{\delta_{2}^{\prime}-1}}\right)=h^{\delta_{1}^{\prime}+\delta_{2}^{\prime}-2}\mathfrak{F}\left(\chi^{2}\right)(0,0)+O\left(h^{\infty}\right)\]
\[
-\sum_{n=-\infty}^{-1}\sum_{m=-\infty}^{+\infty}\chi^{2}\left(\frac{n-n_{0}}{h^{\delta_{1}^{\prime}-1}},\frac{m-m_{0}}{h^{\delta_{2}^{\prime}-1}}\right)-\sum_{n=0}^{+\infty}\sum_{m=-\infty}^{-1}\chi^{2}\left(\frac{n-n_{0}}{h^{\delta_{1}^{\prime}-1}},\frac{m-m_{0}}{h^{\delta_{2}^{\prime}-1}}\right).\]
}and with the lemma above we see easily that \[
\sum_{n=-\infty}^{-1}\sum_{m=-\infty}^{+\infty}\chi^{2}\left(\frac{n-n_{0}}{h^{\delta_{1}^{\prime}-1}},\frac{m-m_{0}}{h^{\delta_{2}^{\prime}-1}}\right)=O(h^{\infty}),\]
\foreignlanguage{english}{\[
\sum_{n=0}^{+\infty}\sum_{m=-\infty}^{-1}\chi^{2}\left(\frac{n-n_{0}}{h^{\delta_{1}^{\prime}-1}},\frac{m-m_{0}}{h^{\delta_{2}^{\prime}-1}}\right)=O(h^{\infty}).\]
Finally we get : }\textit{\[
\left\Vert \chi\left(\frac{n-n_{0}}{h^{\delta_{1}^{\prime}-1}},\frac{m-m_{0}}{h^{\delta_{2}^{\prime}-1}}\right)\right\Vert _{\ell^{2}(\mathbb{N}^{2})}^{2}=\mathfrak{F}\left(\chi^{2}\right)(0,0)h^{\delta_{1}^{\prime}+\delta_{2}^{\prime}-2}+O\left(h^{\infty}\right);\]
}hence \[
K_{h}=\frac{1}{\sqrt{\mathfrak{F}\left(\chi^{2}\right)(0,0)}h^{\frac{\delta_{1}^{\prime}+\delta_{2}^{\prime}-2}{2}}}+O\left(h^{\infty}\right).\]
For finish, we write\textit{\[
\left\Vert a_{n,m}\right\Vert _{\ell^{2}(\mathbb{N}^{2})}^{2}=K_{h}^{2}\sum_{n,m\in\mathbb{Z}^{2}}\left|\chi\left(\omega_{1}\frac{n-n_{0}}{h^{\delta_{1}^{\prime}-1}},\omega_{2}\frac{m-m_{0}}{h^{\delta_{2}^{\prime}-1}}\right)\right|^{2}\]
\[
=K_{h}^{2}h^{\delta_{1}^{\prime}+\delta_{2}^{\prime}-2}\left[\mathfrak{F}\left(\chi^{2}\right)(0,0)+O(h^{\infty})\right]=1+O(h^{\infty}).\]
}
\end{proof}

\subsection{Technical interlude : the set $\Delta$}

In this subsection, we introduce the set $\Delta\subset\mathbb{N}^{2}$,
this set is useful for making approximation for autocorrelation function.
Start by the definition : 
\begin{defn}
Let us define the set of integers $\Delta=\Delta(h,E_{1},E_{2})$
by :

\[
\Delta:=\left\{ \left(n,m\right)\in\mathbb{N}^{2};\,\left|\tau_{n}-\tau_{n_{0}}\right|\leq\omega_{1}h^{\delta_{1}}\textrm{ and }\left|\mu_{m}-\mu_{m_{0}}\right|\leq\omega_{2}h^{\delta_{2}}\right\} \]
\[
=\left\{ \left(n,m\right)\in\mathbb{N}^{2};\,\left|n-n_{0}\right|\leq h^{\delta_{1}-1}\textrm{ and }\left|m-m_{0}\right|\leq h^{\delta_{2}-1}\right\} \]
where $0<\delta_{i}<1$; and we define the set $\Gamma=\Gamma(h,E_{1},E_{2})$
by :\[
\Gamma:=\mathbb{N}^{2}-\Delta.\]

\end{defn}
We have the following usefull lemma :
\begin{lem}
If we suppose for all $i\in\{1,2\},$ $\delta_{i}^{\prime}>\delta_{i}$
then we have\[
{\displaystyle \sum_{n,m\in\Gamma}\left|a_{n,m}\right|^{2}}=O(h^{\infty}).\]
\end{lem}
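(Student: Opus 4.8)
The plan is to reduce the estimate to an application of Lemma 2.10 (the rapid-decay tail bound), after rewriting the sum over $\Gamma$ in terms of the Schwartz function $\chi$. First I would recall that $|a_{n,m}|^{2}=K_{h}^{2}\,\chi^{2}\bigl(\omega_{1}\frac{n-n_{0}}{h^{\delta_{1}'-1}},\omega_{2}\frac{m-m_{0}}{h^{\delta_{2}'-1}}\bigr)$ and that by Proposition 2.7 one has $K_{h}^{2}=O\bigl(h^{2-\delta_{1}'-\delta_{2}'}\bigr)$. Hence it suffices to show that
\[
\sum_{(n,m)\in\Gamma}\chi^{2}\!\left(\omega_{1}\frac{n-n_{0}}{h^{\delta_{1}'-1}},\omega_{2}\frac{m-m_{0}}{h^{\delta_{2}'-1}}\right)=O\bigl(h^{\infty}\bigr),
\]
since multiplying an $O(h^{\infty})$ quantity by $K_{h}^{2}=O(h^{-N_{0}})$ for the fixed $N_0=\delta_1'+\delta_2'-2<0$ still leaves $O(h^{\infty})$.

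Next I would unpack the definition of $\Gamma=\mathbb{N}^{2}\setminus\Delta$: a pair $(n,m)\in\mathbb{N}^2$ lies in $\Gamma$ iff $|n-n_{0}|>h^{\delta_{1}-1}$ or $|m-m_{0}|>h^{\delta_{2}-1}$. So I would split $\Gamma$ into the two (overlapping) pieces $\Gamma_{1}=\{|n-n_{0}|>h^{\delta_{1}-1}\}$ and $\Gamma_{2}=\{|m-m_{0}|>h^{\delta_{2}-1}\}$ and bound the sum over each separately, since $\sum_{\Gamma}\le\sum_{\Gamma_1}+\sum_{\Gamma_2}$. For the $\Gamma_{1}$ piece, I would use the same Schwartz-decay trick as in Lemma 2.10: for any integer $N$, on the region $|n-n_0|>h^{\delta_1-1}$ we can insert a factor $\bigl(\omega_1(n-n_0)/h^{\delta_1'-1}\bigr)^{2N}$ times its reciprocal, bound the reciprocal by $\bigl(h^{\delta_1'-1}/(\omega_1 h^{\delta_1-1})\bigr)^{2N}=\omega_1^{-2N}h^{2N(\delta_1'-\delta_1)}$, and recognize the remaining sum $\sum_{n,m}\bigl(\omega_1(n-n_0)/h^{\delta_1'-1}\bigr)^{2N}\chi^2(\cdots)$ as a Riemann-type sum for the Schwartz function $\psi(x,y)=(\omega_1 x)^{2N}\chi^2(x,y)$ (after the substitution hidden in Lemma 2.10), which is $O\bigl(h^{-(\delta_1'-1)-(\delta_2'-1)}\bigr)=O(h^{2-\delta_1'-\delta_2'})$. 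Here is exactly where the hypothesis $\delta_i'>\delta_i$ enters: the gain $h^{2N(\delta_1'-\delta_1)}$ has a strictly positive exponent, and $N$ is arbitrary, so the product $h^{2N(\delta_1'-\delta_1)+2-\delta_1'-\delta_2'}$ beats any fixed power of $h$. The $\Gamma_{2}$ piece is symmetric, using the variable $m$ instead.

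The main obstacle, and the step requiring the most care, is making the ``Riemann sum'' comparison rigorous and uniform: the sum is over $(n,m)\in\mathbb{Z}^2$ with the lattice spacing $h^{\delta_i'-1}$ in each variable, the shift $n_0$ (and $m_0$) depends on $h$, and one needs the $O$-constants to be independent of that shift. This is precisely the content of Lemma 2.10 (and the $O(1)$ estimate inside its proof), so the cleanest route is to apply that lemma directly to the Schwartz functions $\psi_1(x,y)=(\omega_1 x)^{2N}\chi^2(x,y)$ and $\psi_2(x,y)=(\omega_2 y)^{2N}\chi^2(x,y)$ with $\varepsilon_i=h^{\delta_i'-1}$ — but note $\varepsilon_i\to\infty$ here rather than lying in $]0,1]$, so I would instead factor out the scale explicitly and reduce to the elementary fact that $\sum_{k\in\mathbb{Z}}\varphi(\varepsilon k+u)=\varepsilon^{-1}\hat\varphi(0)+O(\varepsilon^{-1})$ uniformly in $u$ for $\varepsilon$ small, applied with $\varepsilon=h^{1-\delta_i'}\to 0$. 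Once the scaling bookkeeping is set up, the decay argument is routine; the whole proof is three or four lines modulo citing the earlier lemma.
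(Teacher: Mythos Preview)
Your approach is essentially the paper's: split $\Gamma$ according to which coordinate is large, insert the factor $\bigl((n-n_0)/h^{\delta_1'-1}\bigr)^{2N}$, bound its reciprocal on the region $|n-n_0|>h^{\delta_1-1}$, and control the remaining weighted sum by a Riemann-sum estimate for a Schwartz function. The paper streamlines this by working with $|a_{n,m}|^2$ throughout and simply asserting $\sum_{n,m}\bigl((n-n_0)/h^{\delta_1'-1}\bigr)^{2N}|a_{n,m}|^2=O(1)$ directly, which absorbs the $K_h^2$ normalization and the Riemann-sum factor in one step; your detour through Lemma~2.4 is unnecessary, since that lemma addresses a different regime and the bound you need is just the elementary two-dimensional Riemann-sum comparison.

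One slip to fix: the Riemann sum $\sum_{n,m}\psi\bigl(\omega_1(n-n_0)h^{1-\delta_1'},\omega_2(m-m_0)h^{1-\delta_2'}\bigr)$ is of order $h^{\delta_1'+\delta_2'-2}$ (a \emph{negative} power, since the lattice spacing is $h^{1-\delta_i'}\to 0$), not $h^{2-\delta_1'-\delta_2'}$ as you wrote. With the correct sign the final exponent becomes $2N(\delta_1'-\delta_1)+\delta_1'+\delta_2'-2$, which after multiplying back by $K_h^2=O(h^{2-\delta_1'-\delta_2'})$ gives exactly the paper's $O\bigl(h^{2N(\delta_1'-\delta_1)}\bigr)$; your stated exponent would make the bound hold even for $N=0$ and without the hypothesis $\delta_i'>\delta_i$, which should have been a red flag.
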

\begin{proof}
The starting point is the following inequality :\[
{\displaystyle \sum_{n,m\in\Gamma}\left|a_{n,m}\right|^{2}}\leq{\displaystyle \sum_{n,m\in\mathbb{Z}^{2},\,\left|n-n_{0}\right|>h^{\delta_{1}-1}}\left|a_{n,m}\right|^{2}}+\sum_{n,m\in\mathbb{Z}^{2},\,\left|m-m_{0}\right|>h^{\delta_{2}-1}}\left|a_{n,m}\right|^{2}.\]
Since the function $\chi^{2}$ is in the space $\mathcal{S}(\mathbb{R}^{2}),$
for all integer $N\geq1$ we have\foreignlanguage{english}{\[
{\displaystyle \sum_{n,m\in\mathbb{Z}^{2}}\left(\frac{n-n_{0}}{h^{\delta_{1}^{\prime}-1}}\right)^{2N}}\left|a_{n,m}\right|^{2}+{\displaystyle \sum_{n,m\in\mathbb{Z}^{2}}\left(\frac{m-m_{0}}{h^{\delta_{2}^{\prime}-1}}\right)^{2N}}\left|a_{n,m}\right|^{2}=O(1).\]
}Without loss generality, we may suppose that $n_{0}=m_{0}=0$. Next
we write\foreignlanguage{english}{\[
{\displaystyle \sum_{n,m\in\mathbb{Z}^{2},\,\left|n\right|>h^{\delta_{1}-1}}\left|a_{n,m}\right|^{2}}=h^{2N(\delta_{1}^{\prime}-1)}{\displaystyle \sum_{n,m\in\mathbb{Z}^{2},\,\left|n\right|>h^{\delta_{1}-1}}\left|a_{n,m}\right|^{2}}\left(\frac{n}{h^{\delta_{1}^{\prime}-1}}\right)^{2N}\frac{1}{n^{2N}}\]
}

\selectlanguage{english}%
\[
=O\left(h^{2N(\delta_{1}^{\prime}-\delta_{1})}\right).\]
\foreignlanguage{english}{In a similar way, we get }\[
{\displaystyle \sum_{n,m\in\mathbb{Z}^{2},\,\left|m\right|>h^{\delta_{2}-1}}\left|a_{n,m}\right|^{2}}=O\left(h^{2N(\delta_{2}^{\prime}-\delta_{2})}\right);\]
\foreignlanguage{english}{because $\delta_{i}^{\prime}>\delta_{i}$,
this implies ${\displaystyle \sum_{n,m\in\Gamma}\left|a_{n,m}\right|^{2}}=O(h^{\infty}),$
so we prove the lemma.}
\end{proof}
\selectlanguage{english}%

\section{Order 1 approximation : classical periods}

\subsection{Introduction}

In this section, we use a Taylor's formula to expand the phase term
$tF\left(\tau_{n},\mu_{m}\right)/h$ in the variables $(n,m)$ in
linear order. In this approximation appear two periods $T_{cl_{1}}$
and $T_{cl_{2}}$ of order $O(1)$ depending on the gradient of the
function $F$ at the point $\left(E_{1},E_{2}\right)$.

\subsection{Linear approximation and classical periods}
\begin{assumption}
Here, we suppose that $\frac{\partial F}{\partial X}\left(E_{1},E_{2}\right)\neq0,\,\frac{\partial F}{\partial Y}\left(E_{1},E_{2}\right)\neq0.$
\end{assumption}
\selectlanguage{english}%

\subsubsection{Semi-classical \foreignlanguage{english}{and classical periods }}
\selectlanguage{english}%
\begin{defn}
We define semi-classical periods $T_{scl_{1}}$and \foreignlanguage{english}{$T_{scl_{2}}$
by :\[
T_{scl_{1}}:=\frac{2\pi}{\frac{\partial F}{\partial X}\left(\tau_{n_{0}},\mu_{m_{0}}\right)\omega_{1}}\textrm{ and }T_{scl_{2}}:=\frac{2\pi}{\frac{\partial F}{\partial Y}\left(\tau_{n_{0}},\mu_{m_{0}}\right)\omega_{2}}.\]
}
\end{defn}
So, in linear order approximation, we have : 
\begin{prop}
Let $\alpha$ a real number such that $\alpha>1-2\min\delta_{i}$.
Then, uniformly for all $t\in\left[0,h^{\alpha}\right]$:\[
\mathbf{r}(t)=e^{-itF\left(\tau_{n_{0}},\mu_{m_{0}}\right)/h}{\displaystyle \sum_{n,m\in\mathbb{N}^{2}}\left|a_{n,m}\right|^{2}e^{-2i\pi t\left(\frac{n-n_{0}}{T_{scl_{1}}}+\frac{m-m_{0}}{T_{scl_{2}}}\right)}}+O\left(h^{\alpha+2\min\delta_{i}-1}\right).\]
\end{prop}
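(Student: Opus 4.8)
The plan is to expand $F(\tau_n,\mu_m)$ by Taylor's formula to first order about the central point $(\tau_{n_0},\mu_{m_0})$, to isolate the contribution of the localization region $\Delta$ (on which the quadratic Taylor remainder is genuinely small), and to discard the rest of the sum with the help of the lemma controlling $\sum_{(n,m)\in\Gamma}|a_{n,m}|^2$.

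First I would fix parameters $\delta_1,\delta_2\in\,]0,1[$ defining $\Delta$ with $\delta_i<\delta_i'$ for $i=1,2$, so that the last lemma of the preceding subsection applies and gives $\sum_{(n,m)\in\Gamma}|a_{n,m}|^2=O(h^\infty)$. Since every exponential that occurs --- both $e^{-itF(\tau_n,\mu_m)/h}$ in $\mathbf r(t)$ and $e^{-2i\pi t(\frac{n-n_0}{T_{scl_1}}+\frac{m-m_0}{T_{scl_2}})}$ in the claimed main term --- has modulus $1$, replacing the sum over $\mathbb N^2$ by the sum over $\Delta$ in both expressions introduces only an error $O(h^\infty)$, uniformly in $t\ge 0$. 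It therefore suffices to compare the two sums restricted to $\Delta$.

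On $\Delta$ one has $|\tau_n-\tau_{n_0}|\le\omega_1 h^{\delta_1}$ and $|\mu_m-\mu_{m_0}|\le\omega_2 h^{\delta_2}$; since $\tau_{n_0}\to E_1$ and $\mu_{m_0}\to E_2$ as $h\to 0$, the points $(\tau_n,\mu_m)$ all stay in a fixed compact neighbourhood of $[0,1]^2$. As $F$ is a polynomial its second derivatives are bounded there, and Taylor--Lagrange yields, uniformly on $\Delta$,
\[
F(\tau_n,\mu_m)=F(\tau_{n_0},\mu_{m_0})+(\tau_n-\tau_{n_0})\frac{\partial F}{\partial X}(\tau_{n_0},\mu_{m_0})+(\mu_m-\mu_{m_0})\frac{\partial F}{\partial Y}(\tau_{n_0},\mu_{m_0})+R_{n,m},
\]
with $|R_{n,m}|=O(h^{2\delta_1}+h^{2\delta_2})=O(h^{2\min\delta_i})$. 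Multiplying by $-it/h$ with $t\le h^\alpha$ produces a phase error $\frac th|R_{n,m}|=O(h^{\alpha+2\min\delta_i-1})$, which tends to $0$ precisely because $\alpha>1-2\min\delta_i$; hence $e^{-itR_{n,m}/h}=1+O(h^{\alpha+2\min\delta_i-1})$ by $|e^{i\theta}-1|\le|\theta|$. Substituting this, factoring out $e^{-itF(\tau_{n_0},\mu_{m_0})/h}$, and using $\frac1h(\tau_n-\tau_{n_0})\frac{\partial F}{\partial X}(\tau_{n_0},\mu_{m_0})=\omega_1(n-n_0)\frac{\partial F}{\partial X}(\tau_{n_0},\mu_{m_0})=\frac{2\pi(n-n_0)}{T_{scl_1}}$ together with its $m$-analogue, the $\Delta$-sum becomes
\[
e^{-itF(\tau_{n_0},\mu_{m_0})/h}\sum_{(n,m)\in\Delta}|a_{n,m}|^2\, e^{-2i\pi t\left(\frac{n-n_0}{T_{scl_1}}+\frac{m-m_0}{T_{scl_2}}\right)}\bigl(1+O(h^{\alpha+2\min\delta_i-1})\bigr).
\]
Since $\sum_{(n,m)\in\Delta}|a_{n,m}|^2\le\|a_{n,m}\|_{\ell^2(\mathbb N^2)}^2=1+O(h^\infty)$ by the proposition on $K_h$, the error is $O(h^{\alpha+2\min\delta_i-1})$ uniformly in $t$; re-extending the sum from $\Delta$ back to $\mathbb N^2$ adds a further $O(h^\infty)$, which is absorbed, and the stated formula follows.

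The only delicate point is the competition of the two scales: the quadratic remainder $R_{n,m}$ is of size $h^{2\min\delta_i}$ while the prefactor $t/h$ may be as large as $h^{\alpha-1}$, and the hypothesis $\alpha>1-2\min\delta_i$ is exactly what forces the product $\frac th R_{n,m}$ --- and hence the linearization $e^{-itR_{n,m}/h}=1+O(h^{\alpha+2\min\delta_i-1})$ --- to be small, uniformly in $(n,m)\in\Delta$ and in $t\in[0,h^\alpha]$. Everything else, namely the two truncations to $\Delta$ and the elementary identities relating the first derivatives of $F$ to $T_{scl_1},T_{scl_2}$, is routine.
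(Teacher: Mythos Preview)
Your proof is correct and follows essentially the same approach as the paper: Taylor--Lagrange at order two about $(\tau_{n_0},\mu_{m_0})$, splitting into $\Delta$ and $\Gamma$, killing the $\Gamma$-part via $\sum_{\Gamma}|a_{n,m}|^2=O(h^\infty)$, and bounding the quadratic remainder on $\Delta$ by $O(h^{2\min\delta_i})$ so that $|e^{-itR_{n,m}/h}-1|=O(h^{\alpha+2\min\delta_i-1})$. The only cosmetic difference is that the paper directly estimates the difference $\varepsilon(t)$ as a single sum over $\mathbb N^2$ containing the factor $e^{-i2\pi tR_{n,m}(h)}-1$, whereas you truncate both sides to $\Delta$ first and re-extend at the end; the content is identical.
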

\begin{proof}
Let us introduce the difference $\varepsilon(t):=\varepsilon(t,h)$
defined by \[
\varepsilon(t):=\left|{\displaystyle \sum_{n,m\in\mathbb{N}^{2}}\left|a_{n,m}\right|^{2}e^{-i\frac{t}{h}F\left(\tau_{n},\mu_{m}\right)}}-e^{-itF\left(\tau_{n_{0}},\mu_{m_{0}}\right)/h}{\displaystyle \sum_{n,m\in\mathbb{N}^{2}}\left|a_{n,m}\right|^{2}e^{-2i\pi t\left(\frac{n-n_{0}}{T_{scl_{1}}}+\frac{m-m_{0}}{T_{scl_{2}}}\right)}}\right|.\]
For all integers $(n,m)\in\mathbb{N}^{2}$ the Taylor-Lagrange's formula
(at order 2) around $\left(\tau_{n_{0}},\mu_{m_{0}}\right)$ on the
function $F$ gives the existence of a real number $\theta=\theta\left(n,m,n_{0},m_{0}\right)\in\left]0,1\right[$
such that \[
F\left(\tau_{n},\mu_{m}\right)=F\left(\tau_{n_{0}},\mu_{m_{0}}\right)+\frac{2\pi h\left(n-n_{0}\right)}{T_{scl_{1}}}+\frac{2\pi h\left(m-m_{0}\right)}{T_{scl_{2}}}\]
\[
+\frac{1}{2}\frac{\partial^{2}F\left(\rho_{n,m}\right)}{\partial X^{2}}\omega_{1}^{2}h^{2}\left(n-n_{0}\right)^{2}+\frac{1}{2}\frac{\partial^{2}F\left(\rho_{n,m}\right)}{\partial Y^{2}}\omega_{2}^{2}h^{2}\left(m-m_{0}\right)^{2}\]
\[
+\frac{\partial^{2}F}{\partial X\partial Y}\left(\rho_{n,m}\right)\omega_{1}\omega_{2}h^{2}\left(n-n_{0}\right)\left(m-m_{0}\right),\]
with $\rho_{n,m}=\rho(n,m,n_{0},m_{0},h):=\left(\tau_{n_{0}}+\theta(\tau_{n}-\tau_{n_{0}}),\mu_{m_{0}}+\theta(\mu_{m}-\mu_{m_{0}})\right).$

So, we get\[
\varepsilon(t)=\left|\sum_{n,m\in\mathbb{N}^{2}}\left|a_{n,m}\right|^{2}e^{-2i\pi t\left(\frac{n-n_{0}}{T_{scl_{1}}}+\frac{m-m_{0}}{T_{scl_{2}}}\right)}\left[e^{-i2\pi tR_{n,m}(h)}-1\right]\right|\]
where we have used the notation\[
R_{n,m}(h):=\frac{h\mbox{\ensuremath{\omega}}_{1}^{2}(n-n_{0})^{2}}{4\mbox{\ensuremath{\pi}}}\frac{\partial^{2}F\left(\rho_{n,m}\right)}{\partial X^{2}}+\frac{h\mbox{\ensuremath{\omega}}_{2}^{2}(m-m_{0})^{2}}{4\mbox{\ensuremath{\pi}}}\frac{\partial^{2}F\left(\rho_{n,m}\right)}{\partial Y^{2}}\]
\[
+\frac{h\mbox{\ensuremath{\omega}}_{1}\omega_{2}(n-n_{0})(m-m_{0})}{2\pi}\frac{\partial^{2}F\left(\rho_{n,m}\right)}{\partial X\partial Y}.\]
With the sets $\Gamma,$$\Delta$ and by triangular inequality, we
obtain for all $t\geq0$\[
\varepsilon(t)\leq\left|\sum_{n,m\in\Delta}\left|a_{n,m}\right|^{2}e^{-2i\pi t\left(\frac{n-n_{0}}{T_{scl_{1}}}+\frac{m-m_{0}}{T_{scl_{2}}}\right)}\left[e^{-i2\pi tR_{n,m}(h)}-1\right]\right|+2\sum_{n,m\in\Gamma}\left|a_{n,m}\right|^{2}.\]
For all $t\geq0$, for $h$ small enough and for all integers $(n,m)\in\Delta,$
we observe that \[
\frac{th\mbox{\ensuremath{\omega}}_{1}^{2}(n-n_{0})^{2}}{4\mbox{\ensuremath{\pi}}}\frac{\partial^{2}F\left(\rho_{n,m}\right)}{\partial X^{2}}\leq tK_{1}h^{2\delta_{1}-1};\]
\[
\frac{th\mbox{\ensuremath{\omega}}_{2}^{2}(m-m_{0})^{2}}{4\mbox{\ensuremath{\pi}}}\frac{\partial^{2}F\left(\rho_{n,m}\right)}{\partial Y^{2}}\leq tK_{2}h^{2\delta_{2}-1};\]
\[
\frac{th\mbox{\ensuremath{\omega}}_{1}\omega_{2}(n-n_{0})(m-m_{0})}{2\pi}\frac{\partial^{2}F\left(\rho_{n,m}\right)}{\partial X\partial Y}\leq tK_{12}h^{\delta_{1}+\delta_{2}-1};\]
where $K_{1},K_{2},K_{12}>0$ are constants which does not depend
on $h$. Indeed : let us denotes by $B\left((E_{1},E_{2}),r\right)$
the Euclidian ball of dimension 2 with center $(E_{1},E_{2})$ and
radius $r$; since $\lim_{h\rightarrow0}\left(\tau_{n_{0}},\mu_{m_{0}}\right)=\left(E_{1},E_{2}\right)$
we obtain that $\forall\varepsilon>0,\,\exists h_{0}>0$, such that
for all $h\leq h_{0}$, $\left(\tau_{n_{0}},\mu_{m_{0}}\right)\in B\left((E_{1},E_{2}),\varepsilon\right)$;
next for all integers $(n,m)\in\Delta,$ we have $\left|\theta(\tau_{n}-\tau_{n_{0}})\right|=h\omega_{1}\theta\left|n-n_{0}\right|\leq\omega_{1}h^{\delta_{1}}$
and $\left|\theta(\mu_{m}-\mu_{n_{0}})\right|=h\omega_{2}\theta\left|m-m_{0}\right|\leq\omega_{2}h^{\delta_{2}}$,
this means that for $h$ small enough ($h\leq h_{0}$) we have \[
\rho_{n,m}\in B\left((E_{1},E_{2}),\varepsilon\right);\]
therefore we obtain for all $h\leq h_{0}$,\[
\left|\frac{\partial^{2}F}{\partial X^{2}}\left(\rho_{n,m}\right)\right|\leq\sup_{(x,y)\in B\left((E_{1},E_{2}),\varepsilon\right)}\left|\frac{\partial^{2}F}{\partial X^{2}}(x,y)\right|\]
and this quantity is $>0$ and does not depend on $h$. Next \foreignlanguage{english}{we
have for all }$t\in\left[0,h^{\alpha}\right]$ \[
t\left|R_{n,m}(h)\right|\leq K_{1}h^{\alpha+2\delta_{1}-1}+K_{2}h^{\alpha+2\delta_{2}-1}+K_{1,2}h^{\alpha+\delta_{1}+\delta_{2}-1}\]
\[
\leq Mh^{\alpha-1}\left(h^{2\delta_{1}}+h^{2\delta_{2}}+h^{\delta_{1}+\delta_{2}}\right)=3Mh^{2\min\delta_{i}+\alpha-1};\]
where $M:=\max\left(K_{1},K_{2},K_{12}\right);$ with (by hypothesis)
$2\min\delta_{i}+\alpha-1>0.$ This implies that for all $t\in\left[0,h^{\alpha}\right]$
and for all integers $(n,m)\in\Delta$ we get \[
e^{-i2\pi tR_{n,m}(h)}-1=O\left(h^{2\min\delta_{i}+\alpha-1}\right);\]
and consequently we have for \foreignlanguage{english}{all }$t\in\left[0,h^{\alpha}\right]$
\[
\left|\sum_{n,m\in\Delta}\left|a_{n,m}\right|^{2}e^{-2i\pi t\left(\frac{n-n_{0}}{T_{scl_{1}}}+\frac{m-m_{0}}{T_{scl_{2}}}\right)}\left[e^{-i2\pi tR_{n,m}(h)}-1\right]\right|\]
\[
=O\left(h^{2\min\delta_{i}+\alpha-1}\right)\sum_{n,m\in\mathbb{N}^{2}}\left|a_{n,m}\right|^{2}=O\left(h^{2\min\delta_{i}+\alpha-1}\right).\]
Finally, for all $t\in\left[0,h^{\alpha}\right]$ we have  $\varepsilon(t)=O\left(h^{2\min\delta_{i}+\alpha-1}\right).$ 
\end{proof}
The semi-classical periods $T_{scl_{i}}$ depend on the parameter
$h$. Later we consider two cases : $T_{scl_{1}}/T_{scl_{2}}\in\mathbb{Q}$
or not. Consequently we don't make commensurability hypothesis on
the number $T_{scl_{1}}/T_{scl_{2}}$ valid up for all $h>0$, so
we prefer introduce two quantities which does not depend on $h$ to
make latter commensurability hypothesis. So we replace semi-classical
periods $T_{scl_{i}}$ by semi-classical periods $T_{cl_{i}}$ .
\begin{defn}
We define classical periods $T_{cl_{1}}$and \foreignlanguage{english}{$T_{cl_{2}}$
by :\[
T_{cl_{1}}:=\frac{2\pi}{\frac{\partial F}{\partial X}\left(E_{1},E_{2}\right)\omega_{1}}\textrm{ and }T_{cl_{2}}:=\frac{2\pi}{\frac{\partial F}{\partial Y}\left(E_{1},E_{2}\right)\omega_{2}}.\]
}
\end{defn}
An obvious remark is that for all $j\in\left\{ 1,2\right\} $ we have
\foreignlanguage{english}{$\lim_{h\rightarrow0}T_{scl_{j}}=T_{cl_{j}}.$}
\begin{prop}
Let $\tau$ be a real number such that $\tau>-\min\delta_{i}$. Then,
uniformly for all $t\in\left[0,h^{\tau}\right]$: \[
{\displaystyle \sum_{n,m\in\mathbb{N}^{2}}\left|a_{n,m}\right|^{2}e^{-2i\pi t\left(\frac{n-n_{0}}{T_{scl_{1}}}+\frac{m-m_{0}}{T_{scl_{2}}}\right)}}=\sum_{n,m\in\mathbb{N}^{2}}\left|a_{n,m}\right|^{2}e^{-2i\pi t\left(\frac{n-n_{0}}{T_{cl_{1}}}+\frac{m-m_{0}}{T_{cl_{2}}}\right)}+O\left(h^{\tau+\min\delta_{i}}\right).\]
\end{prop}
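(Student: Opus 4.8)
The plan is to factor the semiclassical phase through the classical one and then control the residual exponential by splitting the sum over the localization set $\Delta$ and its complement $\Gamma$.

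First I would write, for every $(n,m)\in\mathbb{N}^{2}$,
\[
e^{-2i\pi t\left(\frac{n-n_{0}}{T_{scl_{1}}}+\frac{m-m_{0}}{T_{scl_{2}}}\right)}=e^{-2i\pi t\left(\frac{n-n_{0}}{T_{cl_{1}}}+\frac{m-m_{0}}{T_{cl_{2}}}\right)}e^{-2i\pi t\Phi_{n,m}},\qquad\Phi_{n,m}:=(n-n_{0})\left(\frac{1}{T_{scl_{1}}}-\frac{1}{T_{cl_{1}}}\right)+(m-m_{0})\left(\frac{1}{T_{scl_{2}}}-\frac{1}{T_{cl_{2}}}\right),
\]
so that the modulus of the difference of the two sums in the statement is at most $\sum_{n,m\in\mathbb{N}^{2}}|a_{n,m}|^{2}\bigl|e^{-2i\pi t\Phi_{n,m}}-1\bigr|$. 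Then I would estimate the frequency mismatch: since $F$ is a polynomial, $\partial F/\partial X$ and $\partial F/\partial Y$ are locally Lipschitz, and since $|\tau_{n_{0}}-E_{1}|\leq\omega_{1}h/2$ and $|\mu_{m_{0}}-E_{2}|\leq\omega_{2}h/2$ (by the very choice of $n_{0},m_{0}$) the point $(\tau_{n_{0}},\mu_{m_{0}})$ lies within $O(h)$ of $(E_{1},E_{2})$. In particular, by continuity together with $\partial F/\partial X(E_{1},E_{2})\neq0$ and $\partial F/\partial Y(E_{1},E_{2})\neq0$, the periods $T_{scl_{1}},T_{scl_{2}}$ are well defined for $h$ small, and working with their reciprocals,
\[
\frac{1}{T_{scl_{1}}}-\frac{1}{T_{cl_{1}}}=\frac{\omega_{1}}{2\pi}\left(\frac{\partial F}{\partial X}(\tau_{n_{0}},\mu_{m_{0}})-\frac{\partial F}{\partial X}(E_{1},E_{2})\right)=O(h),
\]
and likewise $1/T_{scl_{2}}-1/T_{cl_{2}}=O(h)$; fix $C>0$ with $\bigl|1/T_{scl_{i}}-1/T_{cl_{i}}\bigr|\leq Ch$ for $i=1,2$ and $h$ small.

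Next I would split the sum into $\sum_{(n,m)\in\Delta}$ and $\sum_{(n,m)\in\Gamma}$. On $\Gamma$ one uses $\bigl|e^{-2i\pi t\Phi_{n,m}}-1\bigr|\leq2$ together with the lemma of Section 2 (valid because $\delta_{i}^{\prime}>\delta_{i}$), which gives $\sum_{n,m\in\Gamma}|a_{n,m}|^{2}=O(h^{\infty})$, so that part contributes $O(h^{\infty})$. On $\Delta$ one has $|n-n_{0}|\leq h^{\delta_{1}-1}$ and $|m-m_{0}|\leq h^{\delta_{2}-1}$, hence for $t\in[0,h^{\tau}]$
\[
2\pi t\,|\Phi_{n,m}|\leq2\pi Ch^{\tau}\bigl(h^{\delta_{1}-1}h+h^{\delta_{2}-1}h\bigr)=2\pi C\bigl(h^{\tau+\delta_{1}}+h^{\tau+\delta_{2}}\bigr)\leq4\pi C\,h^{\tau+\min\delta_{i}}.
\]
Since $\tau+\min\delta_{i}>0$ by hypothesis, this bound tends to $0$ uniformly in $t\in[0,h^{\tau}]$ and in $(n,m)\in\Delta$, so there $\bigl|e^{-2i\pi t\Phi_{n,m}}-1\bigr|\leq2\pi t\,|\Phi_{n,m}|=O(h^{\tau+\min\delta_{i}})$; summing against $\sum_{n,m\in\mathbb{N}^{2}}|a_{n,m}|^{2}=1+O(h^{\infty})$ yields an $O(h^{\tau+\min\delta_{i}})$ contribution from $\Delta$. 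Adding the two parts gives the claimed bound, uniformly in $t$.

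The computation is essentially routine; the one point to watch is the interplay between the time scale and the localization width. One must check that the hypothesis $\tau>-\min\delta_{i}$ is exactly the condition that forces $h^{\tau+\min\delta_{i}}\to0$, which is what legitimizes the linearization $|e^{i\theta}-1|\leq|\theta|$ uniformly over $\Delta\times[0,h^{\tau}]$, whereas on $\Gamma$ no smallness of the phase is needed because the $\ell^{2}$-mass carried there is already $O(h^{\infty})$.
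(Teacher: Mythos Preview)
Your proof is correct and follows essentially the same route as the paper: split $\mathbb{N}^{2}=\Delta\amalg\Gamma$, use the $O(h^{\infty})$ mass on $\Gamma$, and on $\Delta$ bound the phase difference $t\,\Phi_{n,m}$ via $|1/T_{scl_{i}}-1/T_{cl_{i}}|=O(h)$ and $|n-n_{0}|\leq h^{\delta_{1}-1}$, $|m-m_{0}|\leq h^{\delta_{2}-1}$. If anything, your computation of the frequency mismatch is slightly more direct than the paper's: you write $1/T_{scl_{1}}-1/T_{cl_{1}}=(\omega_{1}/2\pi)\bigl(\partial_{X}F(\tau_{n_{0}},\mu_{m_{0}})-\partial_{X}F(E_{1},E_{2})\bigr)=O(h)$ immediately, whereas the paper first estimates $T_{cl_{1}}-T_{scl_{1}}$ and then separately bounds $1/(T_{scl_{1}}T_{cl_{1}})$; one minor remark is that the inequality $|e^{i\theta}-1|\leq|\theta|$ holds for all real $\theta$, so the hypothesis $\tau>-\min\delta_{i}$ is not needed to ``legitimize'' it but only to ensure the resulting bound $h^{\tau+\min\delta_{i}}$ actually goes to zero.
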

\begin{proof}
We observe that\[
\left|\sum_{n,m\in\mathbb{N}^{2}}\left|a_{n,m}\right|^{2}\left[e^{-2i\pi t\left(\frac{n-n_{0}}{T_{scl_{1}}}+\frac{m-m_{0}}{T_{scl_{2}}}\right)}-e^{-2i\pi t\left(\frac{n-n_{0}}{T_{cl_{1}}}+\frac{m-m_{0}}{T_{cl_{2}}}\right)}\right]\right|\leq\sum_{n,m\in\Gamma}2\left|a_{n,m}\right|^{2}\]
\[
+2\sum_{n,m\in\Delta}\left|a_{n,m}\right|^{2}\left[\left|2\pi t(n-n_{0})\left(\frac{1}{T_{scl_{1}}}-\frac{1}{T_{cl_{1}}}\right)\right|+\left|2\pi t(m-m_{0})\left(\frac{1}{T_{scl_{2}}}-\frac{1}{T_{cl_{2}}}\right)\right|\right],\]
because $\left|e^{iX_{1}}e^{iX_{2}}-e^{iY_{1}}e^{iY_{2}}\right|\leq2\left|X_{1}-Y_{1}\right|+2\left|X_{2}-Y_{2}\right|.$ 

Next for all $t\geq0$ we have \[
\left|2\pi t(n-n_{0})\left(\frac{1}{T_{scl_{1}}}-\frac{1}{T_{cl_{1}}}\right)\right|=\left|2\pi t(n-n_{0})\left(\frac{T_{cl_{1}}-T_{scl_{1}}}{T_{scl_{1}}T_{cl_{1}}}\right)\right|,\]
and we know that \[
T_{cl_{1}}-T_{scl_{1}}=\frac{2\pi}{\omega_{1}}\frac{\frac{\partial F}{\partial X}\left(\tau_{n_{0}},\mu_{m_{0}}\right)-\frac{\partial F}{\partial X}\left(E_{1},E_{2}\right)}{\frac{\partial F}{\partial X}\left(E_{1},E_{2}\right)\frac{\partial F}{\partial X}\left(\tau_{n_{0}},\mu_{m_{0}}\right)}:\]
first, applying the inequality of Lagrange we obtain :\[
\left|\frac{\partial F}{\partial X}\left(\tau_{n_{0}},\mu_{m_{0}}\right)-\frac{\partial F}{\partial X}\left(E_{1},E_{2}\right)\right|\]
\[
\leq\sup_{x,y\in B\left((E_{1},E_{2}),1\right)}\left\Vert \nabla\left(\frac{\partial F}{\partial X}\right)(x,y)\right\Vert _{\mathbb{R}^{2}}\left\Vert \left(\tau_{n_{0}},\mu_{m_{0}}\right)-\left(E_{1},E_{2}\right)\right\Vert _{\mathbb{R}^{2}}\]
\[
\leq M\sqrt{\left(\tau_{n_{0}}-E_{1}\right)^{2}+\left(\mu_{m_{0}}-E_{2}\right)^{2}}\leq Mh\frac{\sqrt{2}}{2}.\]
where $M>0$ and does not depend on $h$. 

On the other hand, since we suppose $\frac{\partial F}{\partial X}\left(E_{1},E_{2}\right)\neq0$,
there exists $\varepsilon_{1}>0$ and $r_{1}>0$ such that for all
$(x,y)\in B\left((E_{1},E_{2}),r_{1}\right)$ we get \foreignlanguage{english}{\[
\left|\frac{\partial F}{\partial X}\left(x,y\right)\right|\geq\varepsilon_{1}.\]
We have seen that hence that there exists $h_{1}>0$ such that for
all $h\in\left]0,h_{1}\right[$ }\[
(\tau_{n_{0}},\mu_{m_{0}})\in B\left((E_{1},E_{2}),r_{1}\right);\]
as a consequence the application $h\mapsto\frac{1}{\frac{\partial F}{\partial X}\left(E_{1},E_{2}\right)\frac{\partial F}{\partial X}\left(\tau_{n_{0}},\mu_{m_{0}}\right)}$
is bounded on the open set \foreignlanguage{english}{$\left]0,h_{1}\right[$;
indeed for all $h\in\left]0,h_{1}\right[$\[
\left|\frac{1}{\frac{\partial F}{\partial X}\left(E_{1},E_{2}\right)\frac{\partial F}{\partial X}\left(\tau_{n_{0}},\mu_{m_{0}}\right)}\right|\leq\frac{1}{\varepsilon_{1}^{2}}<+\infty\]
hence, with $M^{\prime}:=\frac{2\pi}{\omega_{1}}Mh\frac{\sqrt{2}}{2}\frac{1}{\varepsilon_{1}^{2}}$
, for all $h\in\left]0,h_{1}\right[$ we have $\left|T_{cl_{1}}-T_{scl_{1}}\right|\leq hM^{\prime}.$
Next, since\[
\left|\frac{1}{T_{scl_{1}}T_{cl_{1}}}\right|\leq\frac{\omega_{1}\omega_{2}}{4\pi^{2}}\left|\frac{\partial F}{\partial X}\left(E_{1},E_{2}\right)\frac{\partial F}{\partial X}\left(\tau_{n_{0}},\mu_{m_{0}}\right)\right|\]
\[
\leq\frac{\omega_{1}\omega_{2}}{4\pi^{2}}\left(\sup_{x,y\in B\left((E_{1},E_{2}),1\right)}\left|\frac{\partial F}{\partial X}(x,y)\right|\right)^{2}<\infty\]
there exists a constant $C_{1}>0$ which does not depend on $h$ such
that for all $h\in\left]0,h_{1}\right[$ we get $\left|1/T_{scl_{1}}-1/T_{cl_{1}}\right|\leq C_{1}h.$
In a similar way there exists $C_{2}>0$ and $h_{2}>0$ such that
for all $h\in\left]0,h_{2}\right[$ we get $\left|1/T_{scl_{2}}-1/T_{cl_{2}}\right|\leq C_{2}h.$
As a consequence, for all $h\in\left]0,h^{*}\right[$ where $h^{*}:=\min h_{i}$,
for all $t\in\left[0,h^{\tau}\right]$ with $\tau\in\mathbb{R}$,
and for all integers $(n,m)\in\triangle$ we have : }\[
\left|t(n-n_{0})\left(\frac{1}{T_{scl_{1}}}-\frac{1}{T_{cl_{1}}}\right)\right|\leq C_{1}h^{\upsilon+\delta_{1}},\:\left|t(m-m_{0})\left(\frac{1}{T_{scl_{2}}}-\frac{1}{T_{cl_{2}}}\right)\right|\leq C_{2}h^{\upsilon+\delta_{2}};\]
we thus obtain for all $t,(n,m)\in\left[0,h^{\upsilon}\right]\times\Delta$\[
\left|t(n-n_{0})\left(\frac{1}{T_{scl_{1}}}-\frac{1}{T_{cl_{1}}}\right)+t(m-m_{0})\left(\frac{1}{T_{scl_{2}}}-\frac{1}{T_{cl_{2}}}\right)\right|\leq Mh^{\tau+\min\delta_{i}}.\]
Therefore\[
\left|\sum_{n,m\in\mathbb{N}^{2}}\left|a_{n,m}\right|^{2}\left[e^{-2i\pi t\left(\frac{n-n_{0}}{T_{scl_{1}}}+\frac{m-m_{0}}{T_{scl_{2}}}\right)}-e^{-2i\pi t\left(\frac{n-n_{0}}{T_{cl_{1}}}+\frac{m-m_{0}}{T_{cl_{2}}}\right)}\right]\right|\]
\[
\leq4\pi Mh^{\tau+\min\delta_{i}}\sum_{n,m\in\Delta}\left|a_{n,m}\right|^{2}+O\left(h^{\infty}\right)=O\left(h^{\tau+\min\delta_{i}}\right).\]

\end{proof}

\subsubsection{Comparison between classical periods and the time scale $\left[0,h^{\alpha}\right]$}

In proposition 3.3 the hypothesis on $\alpha$ is that $\alpha>1-2\min\delta_{i}$;
therefore with $\delta_{i}\in]\frac{1}{2},1[$ we can make a {}``good
choice'' for $\alpha;$ i.e. to have $\alpha<0$. Hence for $h$
small enough we obtain :\[
\left[0,T_{cl_{i}}\right]\subset\left[0,h^{\alpha}\right].\]
Next, since $-\min\delta_{i}-\left(1-2\min\delta_{i}\right)=-1+\min\delta_{i}\leq0$
we get\[
h^{-\min\delta_{i}}\gg h^{1-2\min\delta_{i}};\]
this means that we can choose to take $\tau=\alpha.$

\subsubsection{The linear approximation $\mathbf{a}_{1}$}

In conclusion, the linear approximation of the autocorrelation function
on the time scale $\left[0,h^{\alpha}\right]$ is :
\begin{defn}
The linear approximation of the autocorrelation function is\[
\mathbf{\mathbf{a_{1}}\,:\,}t\mapsto{\displaystyle \sum_{n,m\in\mathbb{N}^{2}}\left|a_{n,m}\right|^{2}e^{-2i\pi t\left(\frac{n-n_{0}}{T_{cl_{1}}}+\frac{m-m_{0}}{T_{cl_{2}}}\right)}.}\]

\end{defn}

\subsection{Geometrical interpretation of classical periods}

The periods $T_{cl_{i}}$ have geometrical interpretation. For $E_{1},E_{2}>0$
consider the energy level set $M_{E_{1},E_{2}}:=p_{1}^{-1}\left(E_{1}\right)\cap p_{2}^{-1}\left(E_{2}\right)\subset\mathbb{R}^{4}$
, this manifold is isomorphic to the torus $\sqrt{\frac{2E_{1}}{\omega_{1}}}\mathbb{S}^{1}\times\sqrt{\frac{2E_{2}}{\omega_{2}}}\mathbb{S}^{1},$
here $\mathbb{S}^{1}$ is the one-dimension circle. Start with the
calculus of the Hamiltonian flow of $p=F(p_{1},p_{2})$ with an initial
point $m_{0}\in M_{E_{1},E_{2}}.$ So the Hamilton's equations are\[
\left(\begin{array}{c}
\dot{x_{1}}(t)\\
\dot{x_{2}}(t)\\
\dot{\xi_{1}}(t)\\
\dot{\xi_{2}}(t)\end{array}\right)=\left(\begin{array}{c}
a\xi_{1}(t)\\
b\xi_{2}(t)\\
-ax_{1}(t)\\
-bx_{2}(t)\end{array}\right)\]
where we have used the notation $a:=\frac{\partial F}{\partial X}\left(E_{1},E_{2}\right)\omega_{1},\; b:=\frac{\partial F}{\partial Y}\left(E_{1},E_{2}\right)\omega_{2}.$
For all $j\in\left\{ 1,2\right\} $, let us consider the complex number
$Z_{j}(t):=x_{j}(t)+i\xi_{j}(t)$; from the Hamilton equations we
obtain the equalities $\dot{Z_{1}(t)}=-iaZ_{1}(t),\;\dot{Z_{2}(t)}=-ibZ_{2}(t).$
Therefore we get \[
Z_{1}(t)=Z_{1}(0)e^{-iat},\; Z_{2}(t)=Z_{2}(0)e^{-ibt}\]
and \[
|Z_{1}(0)|^{2}=x_{1}^{2}(0)+\xi_{1}^{2}(0)=\frac{2E_{1}}{a},\,|Z_{2}(0)|^{2}=x_{2}^{2}(0)+\xi_{2}^{2}(0)=\frac{2E_{2}}{b};\]
this means that the Hamiltonian's flow in complex coordinate is given
by 

\[
\varphi_{t}:\,\left(\begin{array}{c}
Z_{1}(0)\\
Z_{2}(0)\end{array}\right)\mapsto\left(\begin{array}{c}
Z_{1}(t)\\
Z_{2}(t)\end{array}\right).\]
In angular coordinate the flow is given by\[
\varphi_{t}:\,\left(\begin{array}{c}
\theta_{1,0}\\
\theta_{2,0}\end{array}\right)\mapsto\left(\begin{array}{c}
\theta_{1,0}-t\frac{a}{2\pi}\\
\theta_{2,0}-t\frac{b}{2\pi}\end{array}\right)\]
with $\theta_{j,0}\equiv\frac{\arg Z_{j}(0)}{2\pi}\;\;\left[1\right].$
So we have exactly the classical periods of the Hamiltonian's flow
: \[
\frac{2\pi}{a}=\frac{2\pi}{\frac{\partial F}{\partial X}\left(E_{1},E_{2}\right)\omega_{1}}=T_{cl_{1}},\;\frac{2\pi}{b}=\frac{2\pi}{\frac{\partial F}{\partial Y}\left(E_{1},E_{2}\right)\omega_{2}}=T_{cl_{2}}.\]
It's well know that if the periods are commensurate the flow is periodic
on the torus. In opposite the flow is quasi-periodic on the torus.

\vspace{+0.25cm}

\subsection{The principal part of the function $\mathbf{a_{1}}$}

Now, let us study in details the function $\mathbf{a_{1}}(t)$ on
the time scale \textit{$\left[0,\max T_{cl_{i}}\right]$. }Start by
a technical proposition :
\begin{prop}
For all $t\geq0$ we have \[
\sum_{n,m\in\mathbb{\mathbb{Z}}^{2}}\left|a_{n,m}\right|^{2}e^{-2i\pi t\left(\frac{n-n_{0}}{T_{cl_{1}}}+\frac{m-m_{0}}{T_{cl_{2}}}\right)}\]
\[
=\frac{1}{\mathfrak{F}\left(\chi^{2}\right)(0,0)}\sum_{\ell,s\in\mathbb{Z}^{2}}\mathfrak{F}\left(\chi^{2}\right)\left(-\frac{h^{\delta_{1}^{\prime}-1}}{\omega_{1}}\left(\ell+\frac{t}{T_{cl_{1}}}\right),-\frac{h^{\delta_{2}^{\prime}-1}}{\omega_{2}}\left(s+\frac{t}{T_{cl_{2}}}\right)\right).\]
\end{prop}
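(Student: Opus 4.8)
The plan is to read the identity off from the Poisson summation formula. Because the sum on the left runs over the whole lattice $\mathbb{Z}^{2}$ and its summand, regarded as a function of two real variables, is of Schwartz class, Poisson summation applies with no remainder, which is why one can expect an exact equality.

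First I would use that $\chi$ is real and non-negative, so that $|a_{n,m}|^{2}=K_{h}^{2}\,\chi^{2}\!\left(\omega_{1}\tfrac{n-n_{0}}{h^{\delta_{1}^{\prime}-1}},\,\omega_{2}\tfrac{m-m_{0}}{h^{\delta_{2}^{\prime}-1}}\right)$ for all $(n,m)\in\mathbb{Z}^{2}$. Translating the summation indices by $(n_{0},m_{0})\in\mathbb{Z}^{2}$ rewrites the left-hand side as $K_{h}^{2}\sum_{(k,j)\in\mathbb{Z}^{2}}g_{h,t}(k,j)$, where
\[
g_{h,t}(x,y):=\chi^{2}\!\left(\omega_{1}\tfrac{x}{h^{\delta_{1}^{\prime}-1}},\,\omega_{2}\tfrac{y}{h^{\delta_{2}^{\prime}-1}}\right)e^{-2i\pi t\left(\frac{x}{T_{cl_{1}}}+\frac{y}{T_{cl_{2}}}\right)}.
\]
Since $\chi^{2}\in\mathcal{S}(\mathbb{R}^{2})$ and the unimodular factor is smooth with all its derivatives bounded (for $t$ fixed), we have $g_{h,t}\in\mathcal{S}(\mathbb{R}^{2})$, so the Poisson summation formula gives $\sum_{(k,j)\in\mathbb{Z}^{2}}g_{h,t}(k,j)=\sum_{(\ell,s)\in\mathbb{Z}^{2}}\mathfrak{F}(g_{h,t})(\ell,s)$, by the same argument as in the proof of Proposition 2.5.

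Next I would compute $\mathfrak{F}(g_{h,t})$ from $\mathfrak{F}(\chi^{2})$ using the two elementary rules of the Fourier transform: the dilations $x\mapsto\omega_{1}x/h^{\delta_{1}^{\prime}-1}$ and $y\mapsto\omega_{2}y/h^{\delta_{2}^{\prime}-1}$ rescale the frequency variables (and, together, contribute the factor $h^{\delta_{1}^{\prime}+\delta_{2}^{\prime}-2}$ already present in the proof of Proposition 2.5), while multiplication by $e^{-2i\pi t(x/T_{cl_{1}}+y/T_{cl_{2}})}$ translates the frequency variables by $t/T_{cl_{1}}$ and $t/T_{cl_{2}}$. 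This yields
\[
\mathfrak{F}(g_{h,t})(\ell,s)=h^{\delta_{1}^{\prime}+\delta_{2}^{\prime}-2}\,\mathfrak{F}(\chi^{2})\!\left(-\tfrac{h^{\delta_{1}^{\prime}-1}}{\omega_{1}}\Bigl(\ell+\tfrac{t}{T_{cl_{1}}}\Bigr),\,-\tfrac{h^{\delta_{2}^{\prime}-1}}{\omega_{2}}\Bigl(s+\tfrac{t}{T_{cl_{2}}}\Bigr)\right),
\]
the sign inside the argument being immaterial since $(\ell,s)$ ranges over all of $\mathbb{Z}^{2}$. Summing over $(\ell,s)$, multiplying by $K_{h}^{2}$, and invoking $K_{h}^{2}\,h^{\delta_{1}^{\prime}+\delta_{2}^{\prime}-2}=1/\mathfrak{F}(\chi^{2})(0,0)$ from Proposition 2.5 produces the stated formula.

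I do not expect a genuine obstacle here: the whole proof is a single application of Poisson summation. The care required is only in the bookkeeping --- checking that $g_{h,t}$ is genuinely Schwartz, so that Poisson summation holds with no tail contribution, and keeping track of the dilation and modulation constants together with the value of $K_{h}$, so that the global prefactor collapses to exactly $1/\mathfrak{F}(\chi^{2})(0,0)$ (here Proposition 2.5 is used, the $O(h^{\infty})$ correction appearing there being absorbed into the normalisation of $\psi_{0}$).
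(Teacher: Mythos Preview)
Your approach is essentially the same as the paper's: define a Schwartz function on $\mathbb{R}^{2}$ whose restriction to $\mathbb{Z}^{2}$ gives the summand, apply the Poisson summation formula, and read off the Fourier transform via the dilation and modulation rules. The only cosmetic difference is that the paper keeps the translation by $(n_{0},m_{0})$ inside the summand (picking up a phase $e^{-2i\pi(n_{0}\ell+m_{0}s)}$ that vanishes at lattice points) rather than shifting indices first, and it writes the prefactor $K_{h}^{2}h^{\delta_{1}'+\delta_{2}'-2}=1/\mathfrak{F}(\chi^{2})(0,0)$ as an exact equality without the $O(h^{\infty})$ caveat you mention.
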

\begin{proof}
The trick here is just to use the Poisson formula, so let us consider
the function $\Omega_{t}$ defined by \[
\Omega_{t}:\left\{ \begin{array}{cc}
\mathbb{R}^{2}\rightarrow\mathbb{\mathbb{C}}\\
\\(x_{1},x_{2})\mapsto\left|a_{x_{1},x_{2}}\right|^{2}e^{-2i\pi t\frac{(x_{1}-n_{0})}{T_{cl_{1}}}}e^{-2i\pi t\frac{(x_{2}-m_{0})}{T_{cl_{2}}}}\end{array}\right.\]
where $t\in\mathbb{R}$ is a parameter. For all integers $(n,m)\in\mathbb{\mathbb{Z}}^{2}$
we have \[
\left|a_{n,m}\right|^{2}e^{-2i\pi t\left(\frac{n-n_{0}}{T_{cl_{1}}}+\frac{m-m_{0}}{T_{cl_{2}}}\right)}=\Omega_{t}(n,m).\]
So clearly, the function $\Omega_{t}\in\mathcal{S}(\mathbb{R}^{2})$,
then the Fourier transform $\mathfrak{F}\left(\Omega_{t}\right)$
is equal, for all $\zeta_{1},\zeta_{2}\in\mathbb{R}^{2}$\[
\mathfrak{F}\left(\Omega_{t}\right)\left(\zeta_{1},\zeta_{2}\right)=\int_{-\infty}^{+\infty}\int_{-\infty}^{+\infty}\Omega_{t}\left(x_{1},x_{2}\right)e^{-2i\pi x_{1}\zeta_{1}}e^{-2i\pi x_{2}\zeta_{2}}\, dx_{1}dx_{2};\]
therefore for all $\zeta_{1},\zeta_{2}\in\mathbb{R}^{2}$ we get\[
\mathfrak{F}\left(\Omega_{t}\right)\left(\zeta_{1},\zeta_{2}\right)=\frac{e^{-2i\pi\left(n_{0}\zeta_{1}+m_{0}\zeta_{2}\right)}}{\mathfrak{F}\left(\chi^{2}\right)(0,0)}\mathfrak{F}\left(\chi^{2}\right)\left(-\frac{h^{\delta_{1}^{\prime}-1}}{\omega_{1}}\left(\zeta_{1}+\frac{t}{T_{cl_{1}}}\right),-\frac{h^{\delta_{2}^{\prime}-1}}{\omega_{2}}\left(\zeta_{2}+\frac{t}{T_{cl_{2}}}\right)\right).\]
It comes from the Poisson formula the equality \[
\sum_{n,m\in\mathbb{Z}^{2}}\Omega_{t}(n,m)=\sum_{\ell,s\in\mathbb{Z}^{2}}\mathfrak{F}\left(\Omega_{t}\right)(\ell,s)\]
\textit{\[
=\frac{1}{\mathfrak{F}\left(\chi^{2}\right)(0,0)}\sum_{\ell,s\in\mathbb{Z}^{2}}\mathfrak{F}\left(\chi^{2}\right)\left(-\frac{h^{\delta_{1}^{\prime}-1}}{\omega_{1}}\left(\ell+\frac{t}{T_{cl_{1}}}\right),-\frac{h^{\delta_{2}^{\prime}-1}}{\omega_{2}}\left(s+\frac{t}{T_{cl_{2}}}\right)\right)\]
}which gives the proposition.
\end{proof}
Since the function $\mathfrak{F}\left(\chi^{2}\right)\in\mathcal{S}(\mathbb{R}^{2})$,
we observe that only index $\ell,s\in\mathbb{Z}^{2}$ such that $\ell+\frac{t}{T_{cl_{1}}}$
or $s+\frac{t}{T_{cl_{2}}}$ are close to zero are important in the
sum. More precisely : 
\begin{defn}
For all $t\geq0$, let us define the integers $\ell_{i}(t)=\ell_{i}(t,h,E)$
as the closest integers to the real numbers $-t/T_{cl_{i}}$; i.e
:\[
\ell_{i}(t)+\frac{t}{T_{cl_{i}}}=d\left(t,T_{cl_{i}}\mathbb{Z}\right);\]
where $d(.,.)$ denote the Euclidiean distance on $\mathbb{R}$.\end{defn}
\begin{rem}
Without loss of generality, we may suppose the integers $\ell_{i}(t)$
are unique. On the other hand, for all integer $\ell\in\mathbb{Z}$
such that $\ell\neq\ell_{i}(t)$ we get :\[
\left|\ell+\frac{t}{T_{cl_{i}}}\right|\geq\frac{1}{2}.\]
\end{rem}
\begin{lem}
Uniformly for $t\geq0$ we have :\[
\mathbf{a_{1}}(t)=\frac{1}{\mathfrak{F}\left(\chi^{2}\right)(0,0)}\mathfrak{F}\left(\chi^{2}\right)\left(-\frac{h^{\delta_{1}^{\prime}-1}}{\omega_{1}}d\left(T_{cl_{1}}\mathbb{Z},t\right),-\frac{h^{\delta_{2}^{\prime}-1}}{\omega_{2}}d\left(T_{cl_{2}}\mathbb{Z},t\right)\right)+O\left(h^{\infty}\right).\]
\end{lem}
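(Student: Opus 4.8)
The plan is to strip $\mathbf{a_{1}}(t)$ down to its Poisson dual, in which a single lattice term dominates. I would proceed in three steps: first replace the summation over $\mathbb{N}^{2}$ defining $\mathbf{a_{1}}(t)$ by a summation over $\mathbb{Z}^{2}$; then apply Proposition 3.7 (the Poisson identity established just above) to rewrite the $\mathbb{Z}^{2}$-sum as a series of values of $\mathfrak{F}(\chi^{2})$ indexed by $(\ell,s)\in\mathbb{Z}^{2}$; finally isolate the term of index $(\ell_{1}(t),\ell_{2}(t))$ and show that the sum of all the other terms is $O(h^{\infty})$, uniformly in $t$.

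For the first step, write $\mathbb{N}^{2}=\mathbb{Z}^{2}\setminus(\{n<0\}\cup\{n\ge0,\,m<0\})$: since the exponentials have modulus $1$, the difference between $\mathbf{a_{1}}(t)$ and the corresponding sum over $\mathbb{Z}^{2}$ is bounded in absolute value by $\sum_{n<0,\,m\in\mathbb{Z}}|a_{n,m}|^{2}+\sum_{n\ge0,\,m<0}|a_{n,m}|^{2}$, and these are exactly the two tail sums shown to be $O(h^{\infty})$ in the proof of Proposition 2.5 (by Lemma 2.4 applied to $\chi^{2}$, together with the fact that the normalising constant $K_{h}$ is bounded). This bound does not depend on $t$; applying Proposition 3.7 afterwards gives, uniformly for $t\ge0$,
\[
\mathbf{a_{1}}(t)=\frac{1}{\mathfrak{F}(\chi^{2})(0,0)}\sum_{\ell,s\in\mathbb{Z}^{2}}\mathfrak{F}(\chi^{2})\!\left(-\frac{h^{\delta_{1}^{\prime}-1}}{\omega_{1}}\!\left(\ell+\frac{t}{T_{cl_{1}}}\right),\,-\frac{h^{\delta_{2}^{\prime}-1}}{\omega_{2}}\!\left(s+\frac{t}{T_{cl_{2}}}\right)\right)+O(h^{\infty}).
\]

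For the last step, the term of index $(\ell_{1}(t),\ell_{2}(t))$ equals, by the definition of those integers, precisely the announced principal part. For the remaining indices I would use that $\mathfrak{F}(\chi^{2})\in\mathcal{S}(\mathbb{R}^{2})$, so for every $N$ there is $C_{N}$ with $|\mathfrak{F}(\chi^{2})(\xi,\eta)|\le C_{N}(1+|\xi|)^{-N}(1+|\eta|)^{-N}$, and that, $\ell_{i}(t)$ being the integer closest to $-t/T_{cl_{i}}$, one has $|\ell+t/T_{cl_{i}}|\ge\tfrac12|\ell-\ell_{i}(t)|$ for $\ell\ne\ell_{i}(t)$ (the quantitative form of the remark above). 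Splitting the complement of $\{(\ell_{1}(t),\ell_{2}(t))\}$ as $\{\ell\ne\ell_{1}(t)\}\cup\{\ell=\ell_{1}(t),\,s\ne\ell_{2}(t)\}$, on the first set the first coordinate of the evaluation point has modulus $\ge\frac{h^{\delta_{1}^{\prime}-1}}{2\omega_{1}}|\ell-\ell_{1}(t)|$, which is $\gg1$ since $\delta_{1}^{\prime}<1$, so the $\ell$-sum is $O(h^{N(1-\delta_{1}^{\prime})})$ while the full $s$-sum is $O(1)$; symmetrically on the second set. Hence the whole remainder is $O(h^{N(1-\max_{i}\delta_{i}^{\prime})})$, which is $O(h^{\infty})$ because $\max_{i}\delta_{i}^{\prime}<1$ and $N$ is arbitrary, and combining with the display above proves the lemma.

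The computation is almost entirely mechanical; the one point that deserves care is the uniformity in $t$ in the last step, i.e. checking that neither the separation estimate nor the tail summations degrade as $t$ runs over $[0,\infty)$. This is exactly why the surviving term must be centred at the \emph{moving} index $\ell_{i}(t)$ rather than at a fixed integer, and why the hypothesis $\delta_{i}^{\prime}<1$ — which forces consecutive sample points $\ell+t/T_{cl_{i}}$ of $\mathfrak{F}(\chi^{2})$ to be spread by $h^{\delta_{i}^{\prime}-1}\gg1$ — is essential: after the translation $\ell\mapsto\ell-\ell_{1}(t)$, $s\mapsto s-\ell_{2}(t)$ all the constants above become independent of $t$.
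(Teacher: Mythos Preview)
Your proof is correct and follows essentially the same route as the paper's: pass to the $\mathbb{Z}^{2}$-sum at the cost of $O(h^{\infty})$ via Lemma~2.4, apply the Poisson identity of Proposition~3.7, and then show that every term except the one at $(\ell_{1}(t),\ell_{2}(t))$ is negligible using the Schwartz decay of $\mathfrak{F}(\chi^{2})$. The only cosmetic difference is the order in which you treat the $\mathbb{N}^{2}\!\to\!\mathbb{Z}^{2}$ correction, and that your explicit splitting $\{\ell\ne\ell_{1}(t)\}\cup\{\ell=\ell_{1}(t),\,s\ne\ell_{2}(t)\}$ is slightly more careful than the paper's bare citation of Lemma~2.4 (which as stated handles the ``and'' region rather than the ``or'' region one actually needs).
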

\begin{proof}
Since $\mathfrak{F}\left(\chi^{2}\right)\in\mathcal{S}(\mathbb{R}^{2})$
we have\[
\forall k,d\in\mathbb{N}^{*2},\,\exists B_{k,d}>0,\,\forall\zeta_{1},\zeta_{2}\in\mathbb{R}^{2},\,\left|\mathfrak{F}\left(\chi^{2}\right)(\zeta_{1},\zeta_{2})\right|\leq\frac{B_{k,d}}{\left(1+\left|\zeta_{1}\right|\right)^{k}\left(1+\left|\zeta_{2}\right|\right)^{d}}.\]
Next, it then follow from the proposition above and from the lemma
2.4 that for all $t\geq0$ \textit{\[
\mathbf{a_{1}}(t)=\frac{1}{\mathfrak{F}\left(\chi^{2}\right)(0)}\sum_{\ell,s\in\mathbb{Z}^{2}}\mathfrak{F}\left(\chi^{2}\right)\left(-\frac{h^{\delta_{1}^{\prime}-1}}{\omega_{1}}\left(\ell+\frac{t}{T_{cl_{1}}}\right),-\frac{h^{\delta_{2}^{\prime}-1}}{\omega_{2}}\left(s+\frac{t}{T_{cl_{2}}}\right)\right)\]
\[
=\frac{1}{\mathfrak{F}\left(\chi^{2}\right)(0,0)}\mathfrak{F}\left(\chi^{2}\right)\left(-\frac{h^{\delta_{1}^{\prime}-1}}{\omega_{1}}d\left(t,T_{cl_{1}}\mathbb{Z}\right),-\frac{h^{\delta_{2}^{\prime}-1}}{\omega_{2}}d\left(t,T_{cl_{2}}\mathbb{Z}\right)\right)+O\left(h^{\infty}\right).\]
}Next, for all $t\geq0$ \[
\mathbf{a_{1}}(t)=\sum_{n,m\in\mathbb{\mathbb{\mathbb{\mathbb{Z}}}}^{2}}\left|a_{n,m}\right|^{2}e^{-2i\pi t\left(\frac{n-n_{0}}{T_{cl_{1}}}+\frac{m-m_{0}}{T_{cl_{2}}}\right)}-\sum_{n,m\in\mathbb{\mathbb{\mathbb{\mathbb{Z}}}}^{2}-\mathbb{\mathbb{\mathbb{\mathbb{N}}}}^{2}}\left|a_{n,m}\right|^{2}e^{-2i\pi t\left(\frac{n-n_{0}}{T_{cl_{1}}}+\frac{m-m_{0}}{T_{cl_{2}}}\right)};\]
thus\[
\left|\mathbf{a_{1}}(t)-\frac{1}{\mathfrak{F}\left(\chi^{2}\right)(0,0)}\mathfrak{F}\left(\chi^{2}\right)\left(-\frac{h^{\delta_{1}^{\prime}-1}}{\omega_{1}}d\left(T_{cl_{1}}\mathbb{Z},t\right),-\frac{h^{\delta_{2}^{\prime}-1}}{\omega_{2}}d\left(T_{cl_{2}}\mathbb{Z},t\right)\right)\right|\]
\[
\leq\sum_{n,m\in\mathbb{\mathbb{\mathbb{\mathbb{Z}}}}^{2}-\mathbb{\mathbb{\mathbb{\mathbb{N}}}}^{2}}\left|a_{n,m}\right|^{2}+O\left(h^{\infty}\right).\]
For finish, we observe \[
\sum_{n,m\in\mathbb{\mathbb{\mathbb{\mathbb{Z}}}}^{2}-\mathbb{\mathbb{\mathbb{\mathbb{N}}}}^{2}}\left|a_{n,m}\right|^{2}\]
\foreignlanguage{english}{\[
=\sum_{n=0}^{+\infty}\sum_{m=-\infty}^{-1}\left|a_{n,m}\right|^{2}+\sum_{n=-\infty}^{-1}\sum_{m=-\infty}^{-1}\left|a_{n,m}\right|^{2}+\sum_{n=-\infty}^{-1}\sum_{m=0}^{+\infty}\left|a_{n,m}\right|^{2};\]
}and an obvious consequence of the lemma 2.4 is that $\sum_{n,m\in\mathbb{\mathbb{\mathbb{\mathbb{Z}}}}^{2}-\mathbb{\mathbb{\mathbb{\mathbb{N}}}}^{2}}\left|a_{n,m}\right|^{2}=O\left(h^{\infty}\right).$ 
\end{proof}

\subsection{Behaviour of the function $\mathbf{a_{1}}$ : case $\frac{T_{cl_{1}}}{T_{cl_{2}}}\in\mathbb{Q}$ }

In this subsection we suppose $\frac{T_{cl_{1}}}{T_{cl_{2}}}=\frac{b}{a}\in\mathbb{Q};$
hence $aT_{cl_{1}}=bT_{cl_{2}}.$
\begin{defn}
If the classical periods $T_{cl_{1}},T_{cl_{2}}$ are commensurate
the classical period of the global system is defined by $T_{cl}:=aT_{cl_{1}}=bT_{cl_{2}}.$

Now, we can formulate an important result of this section :\end{defn}
\begin{thm}
We have :

\textbf{(i)} for $t$ real such that $t\in T_{cl}\mathbb{Z}$ we get
(i.e. for all $i\in\{1,2\}$, $d\left(t,T_{cl_{i}}\mathbb{Z}\right)=0)$
\[
\mathbf{a_{1}}(t)=1.\]
\textbf{(ii)} If there exists $i\in\{1,2\}$ such that $d\left(T_{cl_{i}}\mathbb{Z},t\right)>h^{1-\delta_{i}^{\prime}}$
then :\[
\mathbf{a_{1}}(t)=O(h^{\infty}).\]
\end{thm}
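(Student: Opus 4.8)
The statement is read off directly from Lemma 3.11, which already reduces $\mathbf{a_1}(t)$, up to $O(h^\infty)$, to one evaluation of the Schwartz function $\mathfrak{F}(\chi^2)$ at the rescaled lattice distances $\bigl(-\tfrac{h^{\delta_1'-1}}{\omega_1}d(T_{cl_1}\mathbb{Z},t),-\tfrac{h^{\delta_2'-1}}{\omega_2}d(T_{cl_2}\mathbb{Z},t)\bigr)$. So the plan is simply to insert each of the two hypotheses into that formula and read off the answer.

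For \textbf{(i)}, if $t\in T_{cl}\mathbb{Z}$ write $t=kT_{cl}=k\,a\,T_{cl_1}=k\,b\,T_{cl_2}$ with $k\in\mathbb{Z}$; then $t/T_{cl_1}=ka$ and $t/T_{cl_2}=kb$ are integers, so $d(t,T_{cl_i}\mathbb{Z})=0$ for $i=1,2$. One can finish either by Lemma 3.11 — the evaluation point is $(0,0)$ and $\mathfrak{F}(\chi^2)(0,0)/\mathfrak{F}(\chi^2)(0,0)=1$ — or, more transparently, directly: every phase $e^{-2i\pi t\left(\frac{n-n_0}{T_{cl_1}}+\frac{m-m_0}{T_{cl_2}}\right)}=e^{-2i\pi(ka(n-n_0)+kb(m-m_0))}$ equals $1$, so $\mathbf{a_1}(t)=\sum_{n,m\in\mathbb{N}^2}|a_{n,m}|^2=1$ since $\psi_0$ is normalised (the choice of $K_h$ in Definition 2.3, cf. Proposition 2.6).

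For \textbf{(ii)}, assume without loss of generality that the distinguished index is $i=1$, i.e. $d(T_{cl_1}\mathbb{Z},t)>h^{1-\delta_1'}$. By Lemma 3.11 it suffices to bound $\mathfrak{F}(\chi^2)(\zeta_1,\zeta_2)$ with $\zeta_1=-\tfrac{h^{\delta_1'-1}}{\omega_1}d(T_{cl_1}\mathbb{Z},t)$. Because $\delta_1'<1$ the prefactor $h^{\delta_1'-1}$ blows up as $h\to0$, and the hypothesis $d(T_{cl_1}\mathbb{Z},t)>h^{1-\delta_1'}$ says precisely that the evaluation point leaves the $O(h^{1-\delta_1'})$-wide bump of $\mathfrak{F}(\chi^2)$ carved out after rescaling, so that $|\zeta_1|$ is pushed up to (at least) a negative power of $h$; concretely, for $t$ held fixed $d(T_{cl_1}\mathbb{Z},t)$ is a positive constant and $|\zeta_1|\asymp h^{\delta_1'-1}\to\infty$. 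Then the Schwartz decay already invoked in the proof of Lemma 3.11 — for each $N$ there is $B_N>0$ with $|\mathfrak{F}(\chi^2)(\zeta_1,\zeta_2)|\le B_N(1+|\zeta_1|)^{-N}$ uniformly in $\zeta_2$ — gives $\mathfrak{F}(\chi^2)(\zeta_1,\zeta_2)=O(h^\infty)$, whence $\mathbf{a_1}(t)=O(h^\infty)$.

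There is no genuinely hard step left: the Poisson-summation identity (Proposition 3.10) and the truncation to the nearest lattice point (Lemma 3.11) are the analytic core, and here one only has to track how the hypotheses move the evaluation point of $\mathfrak{F}(\chi^2)$. The one place asking for a little care is the quantitative matching of the threshold $h^{1-\delta_i'}$ in the hypothesis with the rescaling factor $h^{\delta_i'-1}$, and the reading of ``$O(h^\infty)$'' (pointwise in $t$, or uniformly over $\{t:\ d(T_{cl_i}\mathbb{Z},t)\ge h^{1-\delta_i'-\eta}\}$ for fixed $\eta>0$) in the borderline regime $d(T_{cl_i}\mathbb{Z},t)\asymp h^{1-\delta_i'}$.
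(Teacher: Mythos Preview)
Your proof is correct and follows the same approach as the paper: both invoke the lemma reducing $\mathbf{a_1}(t)$ to a single evaluation of $\mathfrak{F}(\chi^2)$ at the rescaled lattice distances, then read off (i) at the origin and (ii) via Schwartz decay. For (ii) the paper makes explicit the extraction of an $\varepsilon>0$ with $d(T_{cl_i}\mathbb{Z},t)\ge h^{1-\delta_i'-\varepsilon}$ from the strict inequality, yielding the $O(h^{\varepsilon q})$ bound for every $q$ --- precisely the borderline issue you flag in your closing remark.
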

\begin{proof}
The first point (i) is clear. For the second : it follows from the
lemma 3.10 that \textit{\[
\mathbf{a_{1}}(t)=\frac{1}{\mathfrak{F}\left(\chi^{2}\right)(0,0)}\mathfrak{F}\left(\chi^{2}\right)\left(-\frac{h^{\delta_{1}^{\prime}-1}}{\omega_{1}}d\left(T_{cl_{1}}\mathbb{Z},t\right),-\frac{h^{\delta_{2}^{\prime}-1}}{\omega_{2}}d\left(T_{cl_{2}}\mathbb{Z},t\right)\right)+O\left(h^{\infty}\right);\]
}since the function $\mathfrak{F}\left(\chi^{2}\right)\in\mathcal{S}(\mathbb{R}^{2})$
we have \[
\forall q\in\mathbb{N},\,\exists D_{q}>0,\,\forall\zeta_{1},\zeta_{2}\in\mathbb{R}^{2},\,\left|\mathfrak{F}\left(\chi^{2}\right)\left(\zeta_{1},\zeta_{2}\right)\right|\leq\frac{D_{q}}{\left(1+\left|\zeta_{1}\right|+\left|\zeta_{2}\right|\right)^{q}}\]
and therefore\textit{\[
\left|\mathfrak{F}\left(\chi^{2}\right)\left(-\frac{h^{\delta_{1}^{\prime}-1}}{\omega_{1}}d\left(T_{cl_{1}}\mathbb{Z},t\right),-\frac{h^{\delta_{2}^{\prime}-1}}{\omega_{2}}d\left(T_{cl_{2}}\mathbb{Z},t\right)\right)\right|\]
}\[
\leq\frac{D_{q}}{\left(1+\frac{h^{\delta_{1}^{\prime}-1}}{\omega_{1}}d\left(T_{cl_{1}}\mathbb{Z},t\right)+\frac{h^{\delta_{2}^{\prime}-1}}{\omega_{2}}d\left(T_{cl_{2}}\mathbb{Z},t\right)\right)^{q}}.\]
Thus, if there exists $i\in\{1,2\}$ such that $d\left(T_{cl_{i}}\mathbb{Z},t\right)>\omega_{i}h^{1-\delta_{i}^{\prime}}$
then there exists $\varepsilon>0$ such that $d\left(T_{cl_{i}}\mathbb{Z},t\right)\geq\omega_{i}h^{1-\delta_{i}^{\prime}-\varepsilon}$
and thus for all $q\in\mathbb{N}$ we obtain\textit{\[
\left|\mathfrak{F}\left(\chi^{2}\right)\left(-\frac{h^{\delta_{1}^{\prime}-1}}{\omega_{1}}d\left(T_{cl_{1}}\mathbb{Z},t\right),-\frac{h^{\delta_{2}^{\prime}-1}}{\omega_{2}}d\left(T_{cl_{2}}\mathbb{Z},t\right)\right)\right|\leq D_{q}h^{\varepsilon q};\]
}hence we get\textit{\[
\mathfrak{F}\left(\chi^{2}\right)\left(-\frac{h^{\delta_{1}^{\prime}-1}}{\omega_{1}}d\left(T_{cl_{1}}\mathbb{Z},t\right),-\frac{h^{\delta_{2}^{\prime}-1}}{\omega_{2}}d\left(T_{cl_{2}}\mathbb{Z},t\right)\right)=O(h^{\infty}).\]
}
\end{proof}

\subsection{Behaviour of the function $\mathbf{a_{1}}$ : case  $\frac{T_{cl_{1}}}{T_{cl_{2}}}\notin\mathbb{Q}$ }

Let us now tackle an important case : the case $\frac{b}{a}=\frac{T_{cl_{1}}}{T_{cl_{2}}}$
is not a fraction of $\mathbb{Q}$. Here, there not exists classical
common period, the Hamiltonian flow is not periodic on the torus. 

First, we note that, in view of lemma 3.10, the behaviour of the function
$\mathbf{a}_{\mathbf{1}}$ is given by the function : \[
t\mapsto\frac{1}{\mathfrak{F}\left(\chi^{2}\right)(0,0)}\mathfrak{F}\left(\chi^{2}\right)\left(-\frac{h^{\delta_{1}^{\prime}-1}}{\omega_{1}}d\left(T_{cl_{1}}\mathbb{Z},t\right),-\frac{h^{\delta_{2}^{\prime}-1}}{\omega_{2}}d\left(T_{cl_{2}}\mathbb{Z},t\right)\right).\]
Therefore, since the function $\mathfrak{F}\left(\chi^{2}\right)$
belongs to the space $\mathcal{S}(\mathbb{R}^{2}),$ we need to explain
simultaneously the evolutions of the distances $d\left(T_{cl_{j}}\mathbb{Z},t\right)$
depending on time. In another formulation, we want to analyze the
behaviour of the Euclidian distance between the segment line $\left(OM_{t}\right)$
where $O:=(0,0)$, $M_{t}:=(t,t)$ and the lattice \foreignlanguage{english}{$T_{cl_{1}}\mathbb{Z}\times T_{cl_{2}}\mathbb{Z}$}
depending on the time $t$ and the number $\frac{T_{cl_{1}}}{T_{cl_{2}}}=\frac{b}{a}$.
Precisely, we want to compare the distance $d\left(\left(OM_{t}\right),T_{cl_{1}}\mathbb{Z}\times T_{cl_{2}}\mathbb{Z}\right)$
with the real number $h^{\delta_{j}^{\prime}-1}$. For example, if
for a time $t^{*}$ the distance is larger than $h^{1-\delta_{j}^{\prime}}$
we get $\mathbf{a}_{\mathbf{1}}\left(t^{*}\right)=O(h^{\infty})$.

\vspace{+0.25cm}

Start this new subsection by some geometrical results and latter we
explain the study of the autocorrelation function $\mathbf{a_{1}}(t)$.

\subsubsection{Some general points}

In angular coordinates the Hamiltonian flow is :

\[
\varphi_{t}:\,\left\{ \begin{array}{cc}
\left[0,1\right]^{2}\rightarrow\left[0,1\right]^{2}\\
\\\left(\begin{array}{c}
\theta_{1,0}\\
\theta_{2,0}\end{array}\right)\mapsto\left(\begin{array}{c}
-\frac{at}{2\pi}+\theta_{1,0}\\
-\frac{bt}{2\pi}+\theta_{2,0}\end{array}\right) & .\end{array}\right.\]
Without loss of generality we may suppose that the initial data is
$\left(\begin{array}{c}
\theta_{1,0}\\
\theta_{2,0}\end{array}\right)=\left(\begin{array}{c}
0\\
0\end{array}\right)$ and that $a,b<0$. Therefore the Hamiltonian flow is given by $\varphi_{t}=\left(\begin{array}{c}
\mathbf{a}t\\
\mathbf{b}t\end{array}\right),$ where we have used the notation $\mathbf{a}:=-\frac{a}{2\pi}>0$
and $\mathbf{b}:=-\frac{b}{2\pi}>0.$ Recall that $T_{cl_{1}}=\left|\frac{2\pi}{a}\right|=\left|\frac{1}{\mathbf{a}}\right|$
and $T_{cl_{2}}=\left|\frac{2\pi}{b}\right|=\left|\frac{1}{\mathbf{b}}\right|.$
So, to understand the behaviour of the function 

\[
t\mapsto\frac{1}{\mathfrak{F}\left(\chi^{2}\right)(0,0)}\mathfrak{F}\left(\chi^{2}\right)\left(-\frac{h^{\delta_{1}^{\prime}-1}}{\omega_{1}}d\left(T_{cl_{1}}\mathbb{Z},t\right),-\frac{h^{\delta_{2}^{\prime}-1}}{\omega_{2}}d\left(T_{cl_{2}}\mathbb{Z},t\right)\right)\]
we need to explain the evolution of the Euclidian distance $d\left(\varphi_{t},\mathbb{Z}_{*}^{2}\right)$
depending on time $t$ and on the real number $\frac{\mathbf{b}}{\mathbf{a}}$.

\subsubsection{Suppose $\frac{b}{a}$ verify diophantin condition }

J. Liouville proved in 1884 the following theorem :
\begin{thm}
\textbf{(Liouville). }For all algebraic irrational number $\theta$
with degree $d\geq2$ there exists a constant $C=C(\theta)>0$ such
that the inequality\[
\left|\theta-\frac{p}{q}\right|\geq\frac{C}{q^{d}}\]
holds for all rationals $\frac{p}{q}$.
\end{thm}
In other words, algebraic numbers are bad approximation by rationals.
Finally, in 1955 K. F. Roth has considerably improved this result
(he was awarded the Field medal in 1958).
\begin{defn}
We say an irrational number $\theta$ satisfy a $\varepsilon$-diophantin
condition $(\varepsilon\geq0)$ if and only if there exists a constant
$C_{\varepsilon}>0$ such that\[
\left|\theta-\frac{p}{q}\right|\geq\frac{C_{\varepsilon}}{q^{2+\varepsilon}}.\]
holds for all $(p,q)\in\mathbb{Z}\times\mathbb{N}^{*}$. We denote
by $\mathcal{C}_{\varepsilon}$ the set of irrationals $\theta$ that
holds $\varepsilon$-diophantin condition. We say that an $\theta$
irrational $\theta$ is a Roth number if and only if $\theta\in{\displaystyle \bigcap_{\varepsilon>0}}\mathcal{C}_{\varepsilon}$;
i.e \[
\forall{\displaystyle \varepsilon>0},\,\exists C_{\varepsilon}>0;\,\forall(p,q)\in\mathbb{Z}\times\mathbb{N}^{*};\,\left|\theta-\frac{p}{q}\right|\geq\frac{C_{\varepsilon}}{q^{2+\varepsilon}}.\]

\end{defn}
There is a lot of Roth numbers examples : 
\begin{thm}
\textbf{(Thue-Siegel-Roth). }Every real algebraic irrational number
of degree $d\geq2$ is a Roth number.
\end{thm}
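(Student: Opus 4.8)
The statement is precisely Roth's theorem, so the plan is to reproduce the classical Thue--Siegel--Roth argument by contradiction. Suppose $\alpha$ were a real algebraic irrational of degree $d\ge 2$ that is \emph{not} a Roth number: there would be an $\varepsilon>0$ and infinitely many reduced fractions $p/q$ with $|\alpha-p/q|<q^{-2-\varepsilon}$ (necessarily with $q\to\infty$ and $|p/q|\le|\alpha|+1$). Fix a small $\delta=\delta(\varepsilon)>0$ and a large integer $m=m(\varepsilon,d)$; from the infinite family choose $m$ fractions $p_1/q_1,\dots,p_m/q_m$ whose denominators grow so fast that $\log q_{j+1}/\log q_j$ exceeds a bound depending on $m$ and $\delta$, with $q_1$ itself enormous. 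Finally choose positive integers $r_1>r_2>\dots>r_m$ with $r_1$ huge and $r_j$ essentially equal to $r_1(\log q_1)/(\log q_j)$, so that $q_1^{r_1},\dots,q_m^{r_m}$ are all roughly equal to some large $Q$. Throughout, the \emph{index} of a polynomial $Q(x_1,\dots,x_m)$ at a point $\mathbf{y}$ with respect to the weights $(r_1,\dots,r_m)$ is $\operatorname{ind}(Q;\mathbf{y}):=\min\{\sum_i \ell_i/r_i:\ (\partial^{\ell}Q)(\mathbf{y})\ne 0\}$.

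\emph{Step 1 (auxiliary polynomial).} By Siegel's lemma --- the pigeonhole principle applied to a homogeneous integer linear system with strictly more unknowns than equations --- there is a nonzero $P\in\mathbb{Z}[x_1,\dots,x_m]$ with $\deg_{x_i}P\le r_i$, with all coefficients bounded by $C^{r_1+\dots+r_m}$ for some $C=C(\alpha)$, and with $\operatorname{ind}\bigl(P;(\alpha,\dots,\alpha)\bigr)\ge(\tfrac12-\delta)m$. Algebraicity of $\alpha$ enters twice: the vanishing of a given normalized derivative of $P$ at $(\alpha,\dots,\alpha)$ becomes, after reducing powers of $\alpha$ modulo its minimal polynomial, exactly $d$ linear equations over $\mathbb{Q}$ on the coefficients (with controlled denominators, which feeds the constant $C$); and because $\sum_i\ell_i/r_i$ concentrates around $m/2$ over the box $\prod_i\{0,\dots,r_i\}$, the number of multi-indices with $\sum_i\ell_i/r_i<(\tfrac12-\delta)m$ is a proportion of the monomials that tends to $0$ as $m\to\infty$. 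Hence for $m$ large the number of equations stays below the number of unknowns and Siegel's lemma applies with the stated height bound, which is $\ll q_1^{r_1}$ since $\sum_i r_i\sim r_1$ when $q_1$ is large.

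\emph{Step 2 (forcing a large index at the rationals).} Let $\ell$ be a multi-index with $\lambda:=\sum_i\ell_i/r_i$ small. Taylor-expand the normalized derivative $P_\ell:=\partial^{\ell}P/\prod_i\ell_i!\in\mathbb{Z}[x_1,\dots,x_m]$ about $(\alpha,\dots,\alpha)$ and evaluate at $(p_1/q_1,\dots,p_m/q_m)$: only the multi-indices $\mathbf{j}$ with $\sum_i j_i/r_i\ge(\tfrac12-\delta)m-\lambda$ survive, and for those $\prod_i|p_i/q_i-\alpha|^{j_i}\le\prod_i q_i^{-(2+\varepsilon)j_i}$, which by the normalization $q_i^{r_i}\approx Q$ is at most $Q^{-(2+\varepsilon)((1/2-\delta)m-\lambda)}$. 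Since $\prod_i q_i^{r_i}\cdot P_\ell(p_1/q_1,\dots,p_m/q_m)$ is an integer, comparing it against $\prod_i q_i^{r_i}\approx Q^{m}$ times the coefficient bound $C^{\sum_i r_i}$ (negligible against $Q$ because $q_1$ is huge) shows this integer has absolute value $<1$, hence is $0$, whenever $\lambda<(\tfrac12-\delta)m-\tfrac{m}{2+\varepsilon}=m\bigl(\tfrac{\varepsilon}{2(2+\varepsilon)}-\delta\bigr)=:\theta_0 m$, with $\theta_0>0$ fixed once $\delta<\tfrac{\varepsilon}{4(2+\varepsilon)}$. Therefore $\operatorname{ind}\bigl(P;(p_1/q_1,\dots,p_m/q_m)\bigr)\ge\theta_0 m$.

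\emph{Step 3 (Roth's lemma --- the crux).} Roth's lemma asserts that a nonzero $P$ of the above type (degrees $\le r_i$ in $x_i$, height small compared with $q_1^{r_1}$, with $q_i^{r_i}$ roughly constant and the $q_i$ growing rapidly) satisfies $\operatorname{ind}\bigl(P;(p_1/q_1,\dots,p_m/q_m)\bigr)<\kappa m$, where $\kappa$ can be made smaller than any prescribed positive number by taking the growth of the $q_j$ fast enough and $q_1$ large. Choosing the data so that $\kappa<\theta_0$ contradicts Step 2, and this contradiction proves the theorem. The proof of Roth's lemma itself is the only genuinely hard part: it proceeds by induction on $m$, writing $P=\sum_k\phi_k(x_1)\psi_k(x_2,\dots,x_m)$ with the fewest terms and forming the product of a generalized Wronskian in $x_1$ of the $\phi_k$ with one in $(x_2,\dots,x_m)$ of the $\psi_k$ --- a nonzero polynomial that factors through fewer variables, whose degree and height are controlled and whose index at the rational point is simultaneously bounded above by the inductive hypothesis and below in terms of $\operatorname{ind}(P)$. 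This Wronskian and height bookkeeping, together with the need for arbitrarily many variables $m$, is exactly what separates Roth's theorem from the weaker exponents of Thue, Siegel, Gel'fond and Dyson; everything else above --- Siegel's lemma, the Taylor estimate, and the observation that an integer of absolute value $<1$ vanishes --- is routine.
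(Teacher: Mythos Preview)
Your outline is a faithful sketch of the classical Thue--Siegel--Roth argument: the auxiliary-polynomial construction via Siegel's lemma, the transfer of high index from $(\alpha,\dots,\alpha)$ to the rational point using the approximation hypothesis and the integrality trick, and the contradiction via Roth's lemma. The steps and quantitative dependencies are essentially correct as stated.

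However, the paper does not prove this theorem at all. It is simply stated as a known classical result (alongside Liouville's theorem and the Khintchine--L\'evy theorem) to justify that the set of Roth numbers is rich; no proof or even a sketch is given. So there is nothing to compare your argument against: you have supplied a genuine proof outline where the paper only cites the result. If your aim was to match the paper, you could replace your proposal with a one-line reference to the literature (e.g.\ Cassels, \emph{An Introduction to Diophantine Approximation}); if your aim was to supply what the paper omits, your sketch is appropriate, though of course Roth's lemma itself (your Step~3) is where the real work lies and would need a full treatment to count as a complete proof.
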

We have also the (see for example\textbf{ {[}Cas{]}}) :
\begin{thm}
The Lebesgue measure of Roth's numbers is infinite.\end{thm}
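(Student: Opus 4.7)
The plan is to prove the stronger statement that the complement $\mathbb{R} \setminus \mathcal{R}$ of the Roth set has Lebesgue measure zero. Since $\mathbb{R}$ itself has infinite Lebesgue measure, this immediately forces $\lambda(\mathcal{R}) = +\infty$.

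I would first re-express the complement as a countable union, so as to reduce to controlling one exponent at a time. The rationals form a null set, so one may restrict attention to irrational $\theta$. Negating the Roth definition, $\theta \notin \mathcal{R}$ means there exists $\varepsilon > 0$ such that for every $C > 0$ one can find $(p,q) \in \mathbb{Z} \times \mathbb{N}^*$ with $|\theta - p/q| < C/q^{2+\varepsilon}$. Fix an integer $n$ with $1/n < \varepsilon$ and apply this with $C = 1/m$ for each $m \geq 1$: one obtains pairs $(p_m, q_m)$ satisfying $|\theta - p_m/q_m| < 1/q_m^{2+1/n}$, and irrationality forces $q_m \to +\infty$ (otherwise the $p_m/q_m$ would eventually equal $\theta$), hence infinitely many distinct rationals satisfy the inequality. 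Setting
$$N_n := \Big\{ \theta \in \mathbb{R} \;:\; |\theta - p/q| < 1/q^{2+1/n} \text{ for infinitely many } (p,q) \in \mathbb{Z} \times \mathbb{N}^* \Big\},$$
this gives $\mathbb{R} \setminus \mathcal{R} \subset \mathbb{Q} \cup \bigcup_{n \geq 1} N_n$, and it remains to show each $N_n$ is null.

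The second step is a direct covering estimate, in the spirit of Borel--Cantelli. By translation invariance it suffices to bound $\lambda(N_n \cap [0,1])$. For $\theta \in [0,1]$ only the indices $p \in \{-1, 0, \dots, q+1\}$ contribute, and $N_n \cap [0,1] \subset A_{n,Q}$ for every $Q \geq 1$, where $A_{n,Q}$ is the union over $q \geq Q$ and admissible $p$ of the intervals of radius $1/q^{2+1/n}$ centred at $p/q$. Hence
$$\lambda(N_n \cap [0,1]) \leq \sum_{q \geq Q} (q+3) \cdot \frac{2}{q^{2+1/n}} \leq 8 \sum_{q \geq Q} \frac{1}{q^{1+1/n}},$$
and the right-hand side tends to zero as $Q \to +\infty$ because $1 + 1/n > 1$. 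Combined with the previous step this yields $\lambda(\mathbb{R} \setminus \mathcal{R}) = 0$, hence $\lambda(\mathcal{R}) = +\infty$.

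The only mildly delicate point is the reduction to a countable union: one needs the pigeonhole observation that, for an irrational $\theta$, arbitrarily good approximation at exponent $2+\varepsilon$ can be repackaged as infinitely many distinct approximating pairs at any slightly smaller exponent $2+1/n$. Once that is in place the measure estimate is entirely routine, and no further obstacle arises.
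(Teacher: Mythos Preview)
Your proof is correct. Note, however, that the paper does not actually supply its own proof of this statement: it is quoted as a classical fact with a reference to Cassels~\textbf{[Cas]}. What you have written is precisely the standard Borel--Cantelli covering argument that one finds in Cassels or in any text on metric Diophantine approximation: reduce to showing that the set of irrationals admitting infinitely many rational approximations at exponent $2+1/n$ is null, and bound its measure on $[0,1]$ by the tail of the convergent series $\sum_q q^{-1-1/n}$.

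One small remark on exposition: your parenthetical ``otherwise the $p_m/q_m$ would eventually equal $\theta$'' is slightly loose, since $q_m$ need not tend to infinity along the full sequence. What you actually need (and what your argument gives) is that the rationals $p_m/q_m$ take infinitely many distinct values, which follows because $|\theta-p_m/q_m|<1/m\to 0$ while $\theta$ is irrational; this is enough to place $\theta$ in $N_n$. The measure estimate that follows is then entirely clean.
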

\begin{rem}
Let $\theta$ a $\varepsilon$-diophantin number. Since for all $p$
we have$\left|\theta-\frac{p}{1}\right|\geq C_{\varepsilon}$ and
$\left|\theta-p\right|\leq\frac{1}{2}\leq\frac{\sqrt{2}}{2}$; thus
we obtain that $0<C_{\varepsilon}\leq\frac{1}{2}$ holds for all $\varepsilon>0$
\end{rem}
Now, we estimate the Euclidian distance between the set $\left\{ \varphi_{t}\right\} _{t\in[0,T]}$
and $\mathbb{Z}_{*}^{2}:=\mathbb{Z}^{2}-\{(0,0)\}$ .
\begin{notation}
Let us denote by $\Delta$ and $\Gamma$ the following orthogonal
lines \[
\Delta:=\textrm{Vect}(\mathbf{a}e_{1}+\mathbf{\mathbf{b}}e_{2}),\;\Gamma:=\textrm{Vect}(-\mathbf{\mathbf{b}}e_{1}+\mathbf{a}e_{2})\]
where $(e_{1},e_{2})$ is the canonical basis of the vector space
$\mathbb{R}^{2}.$ Let us also considers $\pi_{\Delta}$ the orthogonal
projector on the line $\Delta$ and $\pi_{\Gamma}$ the orthogonal
projector on the line \foreignlanguage{english}{$\Gamma$.}
\end{notation}
\begin{center}
\textbf{\textit{Here we suppose $\theta=\frac{b}{a}$ is a Roth number.}}
\par\end{center}
\begin{lem}
For all $\varepsilon>0$ there exists $0<K_{\varepsilon}\leq C_{\varepsilon}$,
here $C_{\varepsilon}$ denotes the Roth constant of $\theta=\frac{b}{a}$,
such that \[
\left\Vert \pi_{\Delta}\left(ne_{1}+me_{2}\right)\right\Vert _{\mathbb{R}^{2}}\geq\frac{K_{\varepsilon}}{\left\Vert ne_{1}+me_{2}\right\Vert _{\mathbb{R}^{2}}^{1+\varepsilon}};\;\left\Vert \pi_{\Gamma}\left(ne_{1}+me_{2}\right)\right\Vert _{\mathbb{R}^{2}}\geq\frac{K_{\varepsilon}}{\left\Vert ne_{1}+me_{2}\right\Vert _{\mathbb{R}^{2}}^{1+\varepsilon}}\]
holds for all $(n,m)\in\mathbb{Z}_{*}^{2}$.\end{lem}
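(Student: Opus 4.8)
The plan is to reduce the two inequalities to a single statement about $\theta = b/a$ being a Roth number. First I would observe that the line $\Delta = \mathrm{Vect}(\mathbf{a}e_1 + \mathbf{b}e_2)$ and the line $\Gamma = \mathrm{Vect}(-\mathbf{b}e_1 + \mathbf{a}e_2)$ are orthogonal, so for any vector $v = ne_1 + me_2$ we have the Pythagorean decomposition $\|v\|^2 = \|\pi_\Delta(v)\|^2 + \|\pi_\Gamma(v)\|^2$. The orthogonal projection onto $\Gamma$ has the explicit form $\pi_\Gamma(ne_1+me_2) = \frac{-\mathbf{b}n + \mathbf{a}m}{\mathbf{a}^2+\mathbf{b}^2}(-\mathbf{b}e_1+\mathbf{a}e_2)$, so that $\|\pi_\Gamma(ne_1+me_2)\|_{\mathbb{R}^2} = \frac{|\mathbf{a}m - \mathbf{b}n|}{\sqrt{\mathbf{a}^2+\mathbf{b}^2}}$, and likewise $\|\pi_\Delta(ne_1+me_2)\|_{\mathbb{R}^2} = \frac{|\mathbf{a}n + \mathbf{b}m|}{\sqrt{\mathbf{a}^2+\mathbf{b}^2}}$. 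Thus both quantities to be bounded below are, up to the fixed constant $1/\sqrt{\mathbf{a}^2+\mathbf{b}^2}$, of the form $|\text{integer linear combination of } \mathbf{a}, \mathbf{b}|$.

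Next I would translate this into a statement about $\theta = \mathbf{b}/\mathbf{a} = b/a$. For $\pi_\Gamma$: if $m = 0$ then $\|\pi_\Gamma(ne_1)\| = |\mathbf{b}n|/\sqrt{\mathbf{a}^2+\mathbf{b}^2} \geq \mathbf{b}/\sqrt{\mathbf{a}^2+\mathbf{b}^2} \geq K_\varepsilon / |n|^{1+\varepsilon}$ trivially for a small enough constant, using $|n| \geq 1$; the case $m \neq 0$ is the genuine one. There we write $|\mathbf{a}m - \mathbf{b}n| = \mathbf{a}|m| \cdot \left|\theta - \frac{n}{m}\right|$ (note we may assume $m \neq 0$ and $n$ arbitrary — if $n = 0$ it is again trivial) and invoke the Roth condition: for every $\varepsilon > 0$ there is $C_\varepsilon > 0$ with $|\theta - n/m| \geq C_\varepsilon / |m|^{2+\varepsilon}$ for all $(n,m) \in \mathbb{Z}\times\mathbb{N}^*$ (and by symmetry for $m < 0$, replacing $m$ by $|m|$). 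This gives $|\mathbf{a}m - \mathbf{b}n| \geq \mathbf{a}C_\varepsilon / |m|^{1+\varepsilon}$. Since $|m| \leq \|ne_1+me_2\|_{\mathbb{R}^2}$, we get $\|\pi_\Gamma(ne_1+me_2)\|_{\mathbb{R}^2} \geq \frac{\mathbf{a}C_\varepsilon}{\sqrt{\mathbf{a}^2+\mathbf{b}^2}} \cdot \frac{1}{\|ne_1+me_2\|_{\mathbb{R}^2}^{1+\varepsilon}}$. For $\pi_\Delta$: note $\mathbf{a}n + \mathbf{b}m = 0$ with $(n,m)\neq(0,0)$ would force $\theta = -n/m \in \mathbb{Q}$, contradicting irrationality; for $n \neq 0$ (if $n=0$, trivial) write $|\mathbf{a}n + \mathbf{b}m| = \mathbf{b}|m|\cdot|\theta + \frac{n}{m}|$ — wait, more cleanly, $|\mathbf{a}n+\mathbf{b}m| = \mathbf{a}|n|\cdot|1 + \theta \frac{m}{n}| = \mathbf{a}|n|\cdot|\theta|\cdot\left|\frac{1}{\theta} - \left(-\frac{m}{n}\right)\right|$, and since $1/\theta = a/b$ is also a Roth number (the Roth property is stable under $x \mapsto 1/x$, with a possibly different constant), the same argument applies with $n$ and $m$ interchanged, yielding the bound with $|n| \leq \|ne_1+me_2\|_{\mathbb{R}^2}$.

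Finally I would set $K_\varepsilon$ to be the minimum of the constants produced in the two cases (and intersected with $C_\varepsilon$ so that $0 < K_\varepsilon \leq C_\varepsilon$ as demanded — shrinking $K_\varepsilon$ only makes the inequality easier), and check that the trivial boundary cases $n=0$ or $m=0$ are absorbed by the same constant after possibly shrinking it once more (using $\|ne_1+me_2\|_{\mathbb{R}^2}^{1+\varepsilon} \geq 1$ when the vector is a nonzero integer vector, which it is on $\mathbb{Z}_*^2$). The main obstacle, though routine, is keeping the two projections' arguments symmetric: one must notice that controlling $\pi_\Delta$ uses the Roth property of $a/b$ rather than of $b/a$, so one invokes the elementary fact that $\theta$ Roth implies $1/\theta$ Roth — this follows because $\left|\frac{1}{\theta} - \frac{q}{p}\right| = \frac{1}{|\theta|}\cdot\frac{|\theta p - q| }{|p|} = \frac{|p|}{|\theta q|}\left|\theta - \frac{p}{q}\right|$ and, for the best rational approximations, $|p|$ and $|q|$ are comparable — alternatively one simply notes the denominators appearing are bounded by the vector norm either way, so no sharp comparison is even needed. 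Everything else is the explicit projection formula plus one application of the Roth hypothesis per line.
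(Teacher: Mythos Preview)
Your approach is essentially the paper's: compute the two projections explicitly as $|\mathbf{a}n+\mathbf{b}m|/\sqrt{\mathbf{a}^2+\mathbf{b}^2}$ and $|\mathbf{a}m-\mathbf{b}n|/\sqrt{\mathbf{a}^2+\mathbf{b}^2}$, factor out to isolate $|\theta - p/q|$, apply the Roth inequality, and bound the relevant integer by $\|ne_1+me_2\|$. Two small points are worth cleaning up.

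First, there is an algebraic slip: you write $|\mathbf{a}m-\mathbf{b}n|=\mathbf{a}|m|\,|\theta-n/m|$, but the right-hand side actually equals $|\mathbf{b}m-\mathbf{a}n|$. The correct factoring is $|\mathbf{a}m-\mathbf{b}n|=\mathbf{a}|m-\theta n|=\mathbf{a}|n|\,|\theta-m/n|$, so for $\pi_\Gamma$ the trivial case is $n=0$ (not $m=0$) and Roth is applied with denominator $|n|$.

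Second, your detour through ``$1/\theta$ is also Roth'' is unnecessary. The paper simply writes $\|\pi_\Delta(ne_1+me_2)\|=|u_1|\,|n+m\theta|=|u_1|\,|m|\,|\theta-(-n/m)|\ge |u_1|C_\varepsilon/|m|^{1+\varepsilon}$ and, symmetrically, $\|\pi_\Gamma(ne_1+me_2)\|\ge |u_2|C_\varepsilon/|n|^{1+\varepsilon}$, using the Roth hypothesis on $\theta$ itself in both cases and then taking $K_\varepsilon=\min(|u_1|,|u_2|)C_\varepsilon\le C_\varepsilon$. With that symmetric bookkeeping your proof and the paper's coincide.
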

\begin{proof}
Let us denotes by $u=\left(\begin{array}{c}
u_{1}\\
u_{2}\end{array}\right):=\frac{1}{\sqrt{\mathbf{a}^{2}+\mathbf{b}^{2}}}\left(\begin{array}{c}
\mathbf{a}\\
\mathbf{b}\end{array}\right)$ the unitary vector of the line $\Delta$, so we have \[
\left\Vert \pi_{\Delta}\left(ne_{1}+me_{2}\right)\right\Vert _{\mathbb{R}^{2}}=\left|\left\langle u,ne_{1}+me_{2}\right\rangle _{\mathbb{R}^{2}}\right|\]
\[
=\left|nu_{1}+mu_{2}\right|=\left|u_{1}\right|\left|n+m\theta\right|;\]
since $\theta$ is a Roth number, for all $\varepsilon>0$ there exist
$C_{\varepsilon}>0$ such that \[
\left\Vert \pi_{\Delta}\left(ne_{1}+me_{2}\right)\right\Vert _{\mathbb{R}^{2}}\geq\left|u_{1}\right|\frac{C_{\varepsilon}}{|m|^{1+\varepsilon}}\]
\[
\geq\frac{\left|u_{1}\right|C_{\varepsilon}}{\left\Vert ne_{1}+me_{2}\right\Vert _{\mathbb{R}^{2}}^{1+\varepsilon}}.\]
In a similar way we ge\[
\left\Vert \pi_{\Gamma}\left(ne_{1}+me_{2}\right)\right\Vert _{\mathbb{R}^{2}}\geq\left|u_{2}\right|\frac{C_{\varepsilon}}{|n|^{1+\varepsilon}}\geq\frac{\left|u_{2}\right|C_{\varepsilon}}{\left\Vert ne_{1}+me_{2}\right\Vert _{\mathbb{R}^{2}}^{1+\varepsilon}};\]
therefore with $K_{\varepsilon}:=\min\left(\left|u_{1}\right|C_{\varepsilon},\left|u_{2}\right|C_{\varepsilon}\right)\leq C_{\varepsilon}$
we obtain the lemma.
\end{proof}
A consequence of this lemma is :
\begin{thm}
For all $\varepsilon>0$ there $K_{\varepsilon}\leq C_{\varepsilon}$,
here $C_{\varepsilon}$ denotes the Roth constant of $\theta=\frac{b}{a}$,
such that for all $t\geq0$\[
d\left(\varphi_{t},\mathbb{Z}_{*}^{2}\right)\geq\frac{K_{\varepsilon}}{\left(\frac{\sqrt{2}}{2}+t\sqrt{\mathbf{a}^{2}+\mathbf{b}^{2}}\right)^{1+\varepsilon}}.\]
\end{thm}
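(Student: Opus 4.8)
The idea is to fix a time $t\ge 0$ and exhibit the nearest lattice point $P=(n,m)\in\mathbb{Z}_*^2$ to $\varphi_t=(\mathbf{a}t,\mathbf{b}t)$, then bound $\|\varphi_t-P\|_{\mathbb{R}^2}$ from below. The key geometric observation is that $\varphi_t$ lies on the line $\Delta=\mathrm{Vect}(\mathbf{a}e_1+\mathbf{b}e_2)$, so the displacement vector $\varphi_t-P$ decomposes orthogonally along $\Delta$ and $\Gamma$ as
\[
\varphi_t-P=\pi_\Delta(\varphi_t-P)+\pi_\Gamma(\varphi_t-P).
\]
Since $\varphi_t\in\Delta$, we have $\pi_\Gamma(\varphi_t)=0$, hence $\pi_\Gamma(\varphi_t-P)=-\pi_\Gamma(P)$, and therefore
\[
\|\varphi_t-P\|_{\mathbb{R}^2}\ge\|\pi_\Gamma(\varphi_t-P)\|_{\mathbb{R}^2}=\|\pi_\Gamma(P)\|_{\mathbb{R}^2}=\|\pi_\Gamma(ne_1+me_2)\|_{\mathbb{R}^2}.
\]
Now I would invoke Lemma~3.22: for every $\varepsilon>0$ there is $K_\varepsilon\le C_\varepsilon$ with $\|\pi_\Gamma(ne_1+me_2)\|_{\mathbb{R}^2}\ge K_\varepsilon/\|ne_1+me_2\|_{\mathbb{R}^2}^{1+\varepsilon}$ for all $(n,m)\in\mathbb{Z}_*^2$. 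So everything reduces to controlling $\|P\|_{\mathbb{R}^2}=\|ne_1+me_2\|_{\mathbb{R}^2}$ from above in terms of $t$.

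**Bounding the norm of the nearest lattice point.**

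Choosing $P$ to be the nearest point of $\mathbb{Z}_*^2$ to $\varphi_t$ — note $\varphi_t$ can be at distance at most $\tfrac{\sqrt2}{2}$ from $\mathbb{Z}^2$, and one checks $\varphi_t$ is never closer to $(0,0)$ than to every other lattice point for $t$ in the relevant range, or else argues directly that replacing $(0,0)$ by a neighbouring lattice point costs only an additive $O(1)$ — we get
\[
\|P\|_{\mathbb{R}^2}\le\|\varphi_t\|_{\mathbb{R}^2}+\|\varphi_t-P\|_{\mathbb{R}^2}\le t\sqrt{\mathbf{a}^2+\mathbf{b}^2}+\frac{\sqrt2}{2}.
\]
Plugging this into the lower bound above yields
\[
d\left(\varphi_t,\mathbb{Z}_*^2\right)=\|\varphi_t-P\|_{\mathbb{R}^2}\ge\frac{K_\varepsilon}{\|P\|_{\mathbb{R}^2}^{1+\varepsilon}}\ge\frac{K_\varepsilon}{\left(\tfrac{\sqrt2}{2}+t\sqrt{\mathbf{a}^2+\mathbf{b}^2}\right)^{1+\varepsilon}},
\]
which is exactly the claimed estimate.

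**Main obstacle.**

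The routine parts are the orthogonal decomposition and the triangle inequality for $\|P\|$. The delicate point is the handling of the distinguished lattice point $(0,0)$: the statement asks for the distance to $\mathbb{Z}_*^2$, i.e.\ with the origin removed, while Lemma~3.22 already covers all of $\mathbb{Z}_*^2$, so the genuine subtlety is only making sure that the nearest point $P$ we select is indeed \emph{nonzero} (otherwise the lemma does not apply to it). For $t$ bounded away from $0$ this is automatic since $\|\varphi_t\|\to\infty$; for small $t$ one observes that $\varphi_t$ is then close to the origin, so its nearest nonzero lattice point $P$ has $\|P\|_{\mathbb{R}^2}\le 1+\|\varphi_t\|_{\mathbb{R}^2}$, and the same computation goes through with the constant $\tfrac{\sqrt2}{2}$ absorbing this case as well (possibly after shrinking $K_\varepsilon$). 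I would therefore organize the proof as: (1) reduce to $\|\pi_\Gamma(P)\|$ via the decomposition; (2) bound $\|P\|$ for the nearest \emph{nonzero} lattice point, treating $t$ small and $t$ large uniformly through the additive constant $\tfrac{\sqrt2}{2}$; (3) apply Lemma~3.22 and conclude.
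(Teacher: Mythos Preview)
Your proposal is correct and follows essentially the same route as the paper: pick the nearest nonzero lattice point $(n_t,m_t)$, use that $\varphi_t\in\Delta$ so the distance dominates $\|\pi_\Gamma(n_t e_1+m_t e_2)\|$, apply the preceding lemma, and then bound $\|(n_t,m_t)\|$ via the triangle inequality and $\|\varphi_t\|=t\sqrt{\mathbf a^2+\mathbf b^2}$. Your discussion of the small-$t$ case (ensuring the nearest lattice point is nonzero and that the additive $\sqrt2/2$ still suffices) is in fact more careful than the paper, which simply asserts $\|\varphi_t-(n_t,m_t)\|\le\sqrt2/2$ without comment.
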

\begin{proof}
We observe that for all $t\geq0$ the point $\varphi_{t}$ belongs
to the line $\Delta$, thus there exists a pair $\left(n_{t},m_{t}\right)\in\mathbb{Z}_{*}^{2}$
such that\[
d\left(\varphi_{t},\mathbb{Z}_{*}^{2}\right)=\left\Vert \overrightarrow{O\varphi_{t}}-\left(n_{t}e_{1}+m_{t}e_{2}\right)\right\Vert _{\mathbb{R}^{2}}\]
\[
\geq\left\Vert \pi_{\Gamma}\left(n_{t}e_{1}+m_{t}e_{2}\right)\right\Vert _{\mathbb{R}^{2}};\]
applying the lemma above we get\[
d\left(\varphi_{t},\mathbb{Z}_{*}^{2}\right)\geq\frac{K_{\varepsilon}}{\left\Vert n_{t}e_{1}+m_{t}e_{2}\right\Vert _{\mathbb{R}^{2}}^{1+\varepsilon}}.\]
On the other hand we have the majorization\[
\left\Vert \overrightarrow{O\varphi_{t}}-\left(n_{t}e_{1}+m_{t}e_{2}\right)\right\Vert _{\mathbb{R}^{2}}\leq\frac{\sqrt{2}}{2},\]
and, by triangular inequality we obtain\foreignlanguage{english}{\[
\left\Vert n_{t}e_{1}+m_{t}e_{2}\right\Vert _{\mathbb{R}^{2}}\leq\left\Vert \overrightarrow{O\varphi_{t}}\right\Vert _{\mathbb{R}^{2}}+\frac{\sqrt{2}}{2}.\]
}Therefore, since $\left\Vert \overrightarrow{O\varphi_{t}}\right\Vert _{\mathbb{R}^{2}}=t\sqrt{\mathbf{a}^{2}+\mathbf{b}^{2}},$
we get for all $t\geq0,\,\varepsilon>0$ \[
d\left(\varphi_{t},\mathbb{Z}_{*}^{2}\right)\geq\frac{K_{\varepsilon}}{\left(t\sqrt{\mathbf{a}^{2}+\mathbf{b}^{2}}+\frac{\sqrt{2}}{2}\right)^{1+\varepsilon}}.\]
\end{proof}
\begin{cor}
For all $\varepsilon>0$ and for every $\eta\in\left]0,\frac{\sqrt{2}^{1+\varepsilon}}{2}\right[\subset\left]0,\frac{\sqrt{2}}{2}\right[$
we have :\[
d\left(\varphi_{t},\mathbb{Z}_{*}^{2}\right)<\eta\Rightarrow t>\frac{1}{\sqrt{\mathbf{a}^{2}+\mathbf{b}^{2}}}\left(\left(\frac{K_{\varepsilon}}{\eta}\right)^{\frac{1}{1+\varepsilon}}-\frac{\sqrt{2}}{2}\right).\]
\end{cor}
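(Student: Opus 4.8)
The plan is to read the corollary as nothing more than the contrapositive of the lower bound established just above, namely $d\left(\varphi_{t},\mathbb{Z}_{*}^{2}\right)\geq K_{\varepsilon}/\left(\frac{\sqrt{2}}{2}+t\sqrt{\mathbf{a}^{2}+\mathbf{b}^{2}}\right)^{1+\varepsilon}$, followed by an elementary rearrangement; no new analytic ingredient is needed. So I fix $\varepsilon>0$ and $\eta$ in the stated interval, and let $t\geq0$ satisfy $d\left(\varphi_{t},\mathbb{Z}_{*}^{2}\right)<\eta$. Combining this with the preceding theorem (whose constant $K_{\varepsilon}\leq C_{\varepsilon}$ does not depend on $t$) gives
\[
\frac{K_{\varepsilon}}{\left(\frac{\sqrt{2}}{2}+t\sqrt{\mathbf{a}^{2}+\mathbf{b}^{2}}\right)^{1+\varepsilon}}\;\leq\;d\left(\varphi_{t},\mathbb{Z}_{*}^{2}\right)\;<\;\eta ,
\]
hence $\left(\frac{\sqrt{2}}{2}+t\sqrt{\mathbf{a}^{2}+\mathbf{b}^{2}}\right)^{1+\varepsilon}>\frac{K_{\varepsilon}}{\eta}$.

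I would then take $(1+\varepsilon)$-th roots on both sides; this is legitimate since both sides are nonnegative and $x\mapsto x^{1/(1+\varepsilon)}$ is strictly increasing on $[0,+\infty)$, and it yields $\frac{\sqrt{2}}{2}+t\sqrt{\mathbf{a}^{2}+\mathbf{b}^{2}}>\left(\frac{K_{\varepsilon}}{\eta}\right)^{1/(1+\varepsilon)}$. Because $\mathbf{a},\mathbf{b}>0$ (as normalized above) we have $\sqrt{\mathbf{a}^{2}+\mathbf{b}^{2}}>0$, so dividing by it and subtracting $\frac{\sqrt{2}}{2}$ isolates $t$ and produces exactly
\[
t>\frac{1}{\sqrt{\mathbf{a}^{2}+\mathbf{b}^{2}}}\left(\left(\frac{K_{\varepsilon}}{\eta}\right)^{\frac{1}{1+\varepsilon}}-\frac{\sqrt{2}}{2}\right),
\]
which is the assertion.

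The only point deserving a word is the role of the hypothesis $\eta\in\left]0,\sqrt{2}^{\,1+\varepsilon}/2\right[$: it is there so that the lower bound just obtained is a genuine (strictly positive) restriction on $t$ rather than a vacuous one --- equivalently, so that $\left(K_{\varepsilon}/\eta\right)^{1/(1+\varepsilon)}>\sqrt{2}/2$ --- and in the applications later in the section $\eta$ will be a small positive power of $h$, so this is automatic as $h\to 0$. In short, there is essentially no obstacle here: the proof is a one-line algebraic inversion of the preceding theorem, the only care needed being the monotonicity of the power map on $[0,+\infty)$, the strict positivity of $\sqrt{\mathbf{a}^{2}+\mathbf{b}^{2}}$, and the $t$-independence of $K_{\varepsilon}$ (already part of that theorem's statement).
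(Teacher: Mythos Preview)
Your proof is correct and follows exactly the paper's argument: assume $d(\varphi_t,\mathbb{Z}_*^2)<\eta$, combine with the lower bound from the preceding theorem, and solve the resulting inequality for $t$. Your added remark on the role of the hypothesis on $\eta$ matches the paper's own remark following the corollary.
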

\begin{proof}
Suppose $d\left(\varphi_{t}(0),\mathbb{Z}_{*}^{2}\right)<\eta$, it
then follows from the theorem above that for all $\varepsilon>0$
there exists a constant $K_{\varepsilon}\in\left]0,\frac{1}{2}\right[$
such that \[
\frac{K_{\varepsilon}}{\left(\frac{\sqrt{2}}{2}+t\sqrt{\mathbf{a}^{2}+\mathbf{b}^{2}}\right)^{1+\varepsilon}}<\eta\]
holds for all $t\geq0$; i.e.\[
\left(\frac{K_{\varepsilon}}{\eta}\right)^{\frac{1}{1+\varepsilon}}<\frac{\sqrt{2}}{2}+t\sqrt{\mathbf{a}^{2}+\mathbf{b}^{2}}.\]
\end{proof}
\begin{rem}
Since $\eta\in\left]0,\frac{\sqrt{2}^{1+\varepsilon}}{2}\right[\subset\left]0,\frac{\sqrt{2}}{2}\right[$
we have $\left(\frac{K_{\varepsilon}}{\eta}\right)^{\frac{1}{1+\varepsilon}}\geq\frac{\sqrt{2}}{2}.$\end{rem}
\begin{notation}
For $\varepsilon>0$ and $\eta>0,$ let us denote :\[
t_{\eta}(\varepsilon):=\frac{1}{\sqrt{\mathbf{a}^{2}+\mathbf{b}^{2}}}\left(\left(\frac{K_{\varepsilon}}{\eta}\right)^{\frac{1}{1+\varepsilon}}\mathbf{-}\frac{\sqrt{2}}{2}\right).\]
\end{notation}
\begin{thm}
For all $\varepsilon>0$, for every $\eta\in\left]0,\frac{\sqrt{2}^{1+\varepsilon}}{2}\right[$
with $\eta$ small enough such that $t_{\eta}(\varepsilon)\geq\max T_{cl_{i}}$
and for all $k\geq1;$ there exists a constant $D_{k}>0$ which does
not depend on $h$ such that the inequality \[
\left|\mathbf{a}_{\mathbf{1}}(t)\right|\leq D_{k}h^{k(1-\max\delta_{i}^{\prime})}\eta^{-k}\]
holds for all $t\in$$[\max T_{cl_{i}},t_{\eta}(\varepsilon)].$ \end{thm}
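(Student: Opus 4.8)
The plan is to combine the Fourier-analytic description of $\mathbf{a_1}$ from Lemma 3.10 with the Diophantine lower bound on $d(\varphi_t,\mathbb{Z}_*^2)$ coming from Theorem 3.23 and Corollary 3.25. First I would invoke Lemma 3.10 to write
\[
\mathbf{a_1}(t)=\frac{1}{\mathfrak{F}(\chi^2)(0,0)}\,\mathfrak{F}(\chi^2)\!\left(-\frac{h^{\delta_1'-1}}{\omega_1}d(T_{cl_1}\mathbb{Z},t),-\frac{h^{\delta_2'-1}}{\omega_2}d(T_{cl_2}\mathbb{Z},t)\right)+O(h^\infty),
\]
so that it suffices to bound the argument of $\mathfrak{F}(\chi^2)$ from below in norm. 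The key point is the identification made in subsection 3.7.1: the pair $\bigl(d(T_{cl_1}\mathbb{Z},t),d(T_{cl_2}\mathbb{Z},t)\bigr)$ is, up to the harmless positive scalings by $1/\omega_j$, nothing but the pair of coordinates of the displacement of $\varphi_t$ from its nearest lattice point in $\mathbb{Z}_*^2$ (recall $T_{cl_j}=1/|\mathbf a|$ resp. $1/|\mathbf b|$ and $\varphi_t=(\mathbf a t,\mathbf b t)$), so that
\[
\sqrt{d(T_{cl_1}\mathbb{Z},t)^2+d(T_{cl_2}\mathbb{Z},t)^2}\ \gtrsim\ d(\varphi_t,\mathbb{Z}_*^2).
\]
More precisely the Euclidean norm of the vector $\bigl(\tfrac{h^{\delta_1'-1}}{\omega_1}d(T_{cl_1}\mathbb{Z},t),\tfrac{h^{\delta_2'-1}}{\omega_2}d(T_{cl_2}\mathbb{Z},t)\bigr)$ is at least $h^{\max\delta_i'-1}\cdot c\cdot d(\varphi_t,\mathbb{Z}_*^2)$ for a constant $c>0$ (namely $\min_j 1/\omega_j$ times a constant relating the sup-distance to $d(\varphi_t,\mathbb{Z}_*^2)$), since $h^{\delta_i'-1}\ge h^{\max\delta_i'-1}$ for $h<1$.

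Next I would feed in the hypothesis $t\in[\max T_{cl_i},\,t_\eta(\varepsilon)]$. By the very definition of $t_\eta(\varepsilon)$ and Corollary 3.25, for $t\le t_\eta(\varepsilon)$ one has the \emph{converse} implication built into Theorem 3.23: rearranging Theorem 3.23, $t\le t_\eta(\varepsilon)$ forces $d(\varphi_t,\mathbb{Z}_*^2)\ge$ the value attained at $t_\eta(\varepsilon)$... but more directly, Theorem 3.23 gives $d(\varphi_t,\mathbb{Z}_*^2)\ge K_\varepsilon\bigl(\tfrac{\sqrt2}{2}+t\sqrt{\mathbf a^2+\mathbf b^2}\bigr)^{-(1+\varepsilon)}$, and on the bounded interval $t\le t_\eta(\varepsilon)$ the denominator is at most $\bigl(\tfrac{\sqrt2}{2}+t_\eta(\varepsilon)\sqrt{\mathbf a^2+\mathbf b^2}\bigr)^{1+\varepsilon}=(K_\varepsilon/\eta)$, whence $d(\varphi_t,\mathbb{Z}_*^2)\ge\eta$ uniformly for $t\in[0,t_\eta(\varepsilon)]$. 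Thus the argument of $\mathfrak{F}(\chi^2)$ has Euclidean norm $\gtrsim h^{\max\delta_i'-1}\eta$, which tends to infinity since $\max\delta_i'<1$.

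Finally I would use the Schwartz decay of $\mathfrak{F}(\chi^2)$: for every $k\ge1$ there is $D_k'>0$ with $|\mathfrak{F}(\chi^2)(\zeta)|\le D_k'(1+\|\zeta\|)^{-k}\le D_k'\|\zeta\|^{-k}$, so
\[
|\mathbf{a_1}(t)|\le \frac{D_k'}{\mathfrak{F}(\chi^2)(0,0)}\bigl(c\,h^{\max\delta_i'-1}\eta\bigr)^{-k}+O(h^\infty)=D_k\,h^{k(1-\max\delta_i')}\eta^{-k},
\]
after absorbing constants into $D_k$ and noting the $O(h^\infty)$ term is dominated by $h^{k(1-\max\delta_i')}\eta^{-k}$ when $\eta$ is fixed (or at worst decaying polynomially) and $h$ is small. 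The lower bound $t\ge\max T_{cl_i}$ is needed only to ensure we are genuinely past the trivial recurrence at $t=0$ so that the regime is the one described, and it plays no further analytic role. The main obstacle — really the only delicate point — is step two: making rigorous the geometric identification that the componentwise distances $d(T_{cl_j}\mathbb{Z},t)$ assemble (after the $\omega_j$-rescaling) into something bounded below by $d(\varphi_t,\mathbb{Z}_*^2)$, i.e. checking that one cannot have both components simultaneously small while $\varphi_t$ stays far from $\mathbb{Z}_*^2$; this is immediate here because the two distances \emph{are} the two coordinates of $\overrightarrow{O\varphi_t}-(n_te_1+m_te_2)$, so their squares sum to exactly $d(\varphi_t,\mathbb{Z}_*^2)^2$ after undoing the scaling, but one must track the $\omega_j$ factors carefully to get the constant $c$ right.
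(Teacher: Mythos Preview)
Your proposal is correct and follows essentially the same approach as the paper: invoke Lemma 3.10, use the Diophantine bound (Theorem 3.23/Corollary 3.25) to get $d(\varphi_t,\mathbb{Z}_*^2)\ge\eta$ on $[\max T_{cl_i},t_\eta(\varepsilon)]$, translate this into a lower bound on the argument of $\mathfrak{F}(\chi^2)$, and conclude by Schwartz decay. One small correction: the scaling relating $d(T_{cl_j}\mathbb{Z},t)$ to the coordinates of $\varphi_t-(n_t,m_t)$ is by $\mathbf a,\mathbf b$ (since $d(\mathbf a t,\mathbb{Z})=|\mathbf a|\,d(t,T_{cl_1}\mathbb{Z})$), not by $1/\omega_j$; the $\omega_j$ enter only through the Lemma 3.10 formula, but as you note all such constants are absorbed into $D_k$, and the paper handles this identically via a norm-equivalence constant $C$.
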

\begin{proof}
Our starting point is that for all $t\geq\max T_{cl_{i}}$ we have
\[
d\left(t,T_{cl_{i}}\mathbb{Z}\right)=d\left(t,T_{cl_{i}}\mathbb{N}^{*}\right).\]
Next, since $T_{cl_{1}}=\left|\frac{2\pi}{a}\right|=\left|\frac{1}{\mathbf{a}}\right|$
and $T_{cl_{2}}=\left|\frac{2\pi}{b}\right|=\left|\frac{1}{\mathbf{b}}\right|$
we get \[
d\left(t,T_{cl_{1}}\mathbb{Z}\right)=d\left(\mathbf{a}t,\mathbb{N}^{*}\right),\; d\left(t,T_{cl_{2}}\mathbb{Z}\right)=d\left(\mathbf{b}t,\mathbb{N}^{*}\right).\]
Therefore, from the corrolary above (by contraposed) we obtain for
all $t\in$$[\max T_{cl_{i}},t_{\eta}(\varepsilon)]$\foreignlanguage{english}{\[
d\left((\mathbf{a}t,\mathbf{b}t),\mathbb{Z}_{*}^{2}\right)\geq\eta;\]
}and since the norms $\left\Vert (x,y)\right\Vert _{\mathbb{R}^{2}}$
and $\left|x\right|+\left|y\right|$ are equivalent on $\mathbb{R}^{2}$,
there exists a constant $C>0$ such that \[
d\left(t,T_{cl_{1}}\mathbb{Z}\right)+d\left(t,T_{cl_{2}}\mathbb{Z}\right)\geq C\eta\]
holds for all $t\in$$[\max T_{cl_{i}},t_{\eta}(\varepsilon)]$. 

Next, since the function $\mathfrak{F}\left(\chi^{2}\right)$ belongs
to the space $\mathcal{S}(\mathbb{R}^{2})$ we have \[
\forall k\in\mathbb{N}^{2},\,\exists M_{k}>0,\,\forall\zeta_{1},\zeta_{2}\in\mathbb{R}^{2},\,\left|\mathfrak{F}\left(\chi^{2}\right)\left(\zeta_{1},\zeta_{2}\right)\right|\leq\frac{M_{k}}{\left(\left|\zeta_{1}\right|+\left|\zeta_{2}\right|\right)^{k}};\]
thus we obtain for all $t\in$$[\max T_{cl_{i}},t_{\eta}(\varepsilon)]$\textit{\[
\left|\mathfrak{F}\left(\chi^{2}\right)\left(-\frac{h^{\delta_{1}^{\prime}-1}}{\omega_{1}}d\left(t,T_{cl_{1}}\mathbb{Z}\right),-\frac{h^{\delta_{2}^{\prime}-1}}{\omega_{2}}d\left(t,T_{cl_{2}}\mathbb{Z}\right)\right)\right|\]
}\[
\leq\frac{M_{k}}{\left(\frac{h^{\delta_{1}^{\prime}-1}}{\omega_{1}}d\left(t,T_{cl_{1}}\mathbb{Z}\right)+\frac{h^{\delta_{2}^{\prime}-1}}{\omega_{2}}d\left(t,T_{cl_{2}}\mathbb{Z}\right)\right)^{k}}\]
\[
\leq\frac{{\displaystyle \max(\omega_{i})^{k}}M_{k}}{h^{k\max(\delta_{i}^{\prime})-k}\left(d\left(t,T_{cl_{1}}\mathbb{Z}\right)+d\left(t,T_{cl_{2}}\mathbb{Z}\right)\right)^{k}}\]
\[
\leq\frac{{\displaystyle \max(\omega_{i})^{k}}M_{k}}{h^{k\max(\delta_{i}^{\prime})-k}}\frac{1}{C^{k}\eta^{k}}={\displaystyle \max(\omega_{i})^{k}\frac{M^{k}}{C^{k}}}h^{k(1-\max(\delta_{i}^{\prime}))}\eta^{-k}.\]

\end{proof}
Applying this theorem with $\eta=h^{s}$ where the real number $s$
belongs to $\left]0,1-\max\delta_{i}^{\prime}\right[$, we have :
\begin{cor}
For all $\varepsilon>0$, $s\in\left]0,1-\max\delta_{i}^{\prime}\right[$
and for $h$ small enough such that $t_{h^{s}}(\varepsilon)\geq\max T_{cl_{i}};$
the following equality\[
\mathbf{a}_{1}(t)=O(h^{\infty})\]
holds uniformly for all $t\in[\max T_{cl_{i}},t_{h^{s}}(\varepsilon)].$ 
\end{cor}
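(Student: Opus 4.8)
The plan is to obtain this statement as an immediate specialization of Theorem 3.24 to the choice $\eta=h^{s}$. First I would check that this choice is admissible in the sense required by that theorem. Since $s>0$ we have $h^{s}\to 0$ as $h\to 0$, so for $h$ small enough $h^{s}$ lies in the interval $\left]0,\frac{\sqrt{2}^{1+\varepsilon}}{2}\right[$; moreover, from the definition of $t_{\eta}(\varepsilon)$ in Notation 3.23,
\[
t_{h^{s}}(\varepsilon)=\frac{1}{\sqrt{\mathbf{a}^{2}+\mathbf{b}^{2}}}\left(\left(K_{\varepsilon}h^{-s}\right)^{\frac{1}{1+\varepsilon}}-\frac{\sqrt{2}}{2}\right)\longrightarrow+\infty\quad(h\to 0),
\]
so the hypothesis $t_{h^{s}}(\varepsilon)\geq\max T_{cl_{i}}$ holds for all sufficiently small $h$ — precisely the regime in which the corollary is stated.

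Next I would insert $\eta=h^{s}$ into the conclusion of Theorem 3.24: for every integer $k\geq 1$ there is a constant $D_{k}>0$, independent of $h$, with
\[
\left|\mathbf{a}_{1}(t)\right|\leq D_{k}\,h^{k(1-\max\delta_{i}^{\prime})}\left(h^{s}\right)^{-k}=D_{k}\,h^{k\left(1-\max\delta_{i}^{\prime}-s\right)}
\]
uniformly for $t\in[\max T_{cl_{i}},t_{h^{s}}(\varepsilon)]$. The key observation is the sign of the exponent: the assumption $s\in\left]0,1-\max\delta_{i}^{\prime}\right[$ gives $1-\max\delta_{i}^{\prime}-s>0$, so $k\left(1-\max\delta_{i}^{\prime}-s\right)\to+\infty$ as $k\to\infty$. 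Hence, given any $N\in\mathbb{N}$, choosing $k$ large enough that $k\left(1-\max\delta_{i}^{\prime}-s\right)\geq N$ yields $\left|\mathbf{a}_{1}(t)\right|\leq D_{k}h^{N}$ for all $t$ in the interval and all $h$ small; since $D_{k}$ does not depend on $h$, this is exactly $\mathbf{a}_{1}(t)=O(h^{\infty})$, uniformly on $[\max T_{cl_{i}},t_{h^{s}}(\varepsilon)]$.

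There is essentially no obstacle here: the whole substance of the corollary is already packaged in Theorem 3.24, and what remains is the elementary fact that a power $h^{kc}$ with fixed $c>0$ beats any prescribed power of $h$ once $k$ is enlarged, together with the routine bookkeeping that the admissibility conditions of the theorem (namely $\eta$ in the right interval and $t_{\eta}(\varepsilon)\geq\max T_{cl_{i}}$) are satisfied automatically for $h$ small when $\eta=h^{s}$ with $s>0$. The only mild care needed is to keep $\varepsilon>0$ fixed throughout (it merely selects which diophantine/Roth constant $K_{\varepsilon}$, hence which $D_{k}$, one works with) and to rely on the uniformity in $t$ of the bound furnished by Theorem 3.24, which is what licenses the uniform $O(h^{\infty})$ conclusion.
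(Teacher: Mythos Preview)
Your proposal is correct and follows exactly the approach indicated in the paper, which simply states that the corollary is obtained by applying Theorem 3.24 with $\eta=h^{s}$ for $s\in\left]0,1-\max\delta_{i}^{\prime}\right[$. You have merely made explicit the routine verifications (admissibility of $\eta=h^{s}$ and positivity of the exponent $1-\max\delta_{i}^{\prime}-s$) that the paper leaves implicit.
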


\subsection*{Notes on time scales }

From a pratical point of view, we must verify that for all $\varepsilon\geq0$
\[
t_{h^{s}}(\varepsilon)\leq h^{\alpha}\]
where \foreignlanguage{english}{$\alpha>1-2\min\left(\delta_{i}\right).$
Indeed we have :}
\begin{prop}
Suppose $\min\delta_{i}>\frac{2}{3}$, for all $\varepsilon>0$ and
for all $s\in\left]0,1-\max\delta_{i}^{\prime}\right[$ we have \[
t_{h^{s}}(\varepsilon)\leq\frac{1}{2\sqrt{\mathbf{a}^{2}+\mathbf{b}^{2}}}\left(h^{1-2{\displaystyle \min\delta_{i}}}-\sqrt{2}\right).\]
\end{prop}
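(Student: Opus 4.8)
The plan is to unwind the definition of $t_{h^s}(\varepsilon)$ and reduce the statement to a comparison of two powers of $h$. Substituting $\eta=h^s$ in the definition of $t_\eta(\varepsilon)$ (Notation~3.23) gives
\[
t_{h^s}(\varepsilon)=\frac{1}{\sqrt{\mathbf a^2+\mathbf b^2}}\left(\left(K_\varepsilon h^{-s}\right)^{\frac{1}{1+\varepsilon}}-\frac{\sqrt 2}{2}\right),
\]
so, since $\sqrt{\mathbf a^2+\mathbf b^2}>0$, after multiplying by $\sqrt{\mathbf a^2+\mathbf b^2}$ and adding $\tfrac{\sqrt 2}{2}$ to both sides the asserted inequality becomes equivalent to
\[
\left(K_\varepsilon h^{-s}\right)^{\frac{1}{1+\varepsilon}}\le\frac12\,h^{1-2\min\delta_i}.
\]
Everything then reduces to controlling the multiplicative constant and comparing the two exponents of $h$.

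For the constant, Lemma~3.19 together with Remark~3.17 gives $0<K_\varepsilon\le C_\varepsilon\le\tfrac12<1$, hence $K_\varepsilon^{1/(1+\varepsilon)}<1$ and therefore $\bigl(K_\varepsilon h^{-s}\bigr)^{1/(1+\varepsilon)}<h^{-s/(1+\varepsilon)}$. For the exponents I will establish the strict inequality $\tfrac{s}{1+\varepsilon}<2\min\delta_i-1$: the hypothesis $\min\delta_i>\tfrac23$ yields $2\min\delta_i-1>\tfrac13$, while $s<1-\max\delta_i'$ and $\delta_j'>\delta_j$ for $j=1,2$ (Lemma~2.7) give $\max\delta_i'>\max\delta_i\ge\min\delta_i>\tfrac23$, so $s<1-\tfrac23=\tfrac13$, and a fortiori $\tfrac{s}{1+\varepsilon}<\tfrac13<2\min\delta_i-1$. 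Writing $\mu:=(2\min\delta_i-1)-\tfrac{s}{1+\varepsilon}>0$ one has $h^{-s/(1+\varepsilon)}=h^{1-2\min\delta_i}\,h^{\mu}$, so the displayed inequality follows from $h^{\mu}\le\tfrac12$, that is, it holds as soon as $h\le 2^{-1/\mu}$ — in particular for $h$ small enough, which is exactly the regime of this subsection (one already needs $h$ small for $h^s$ to lie in the admissible range of $\eta$ and for $t_{h^s}(\varepsilon)\ge\max T_{cl_i}$; cf. Theorem~3.24 and Corollary~3.25).

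I do not anticipate a genuine obstacle: the proposition is a bookkeeping comparison of exponents. The one mildly delicate point is the spurious factor $\tfrac12$ on the right-hand side, which is harmless precisely because the exponent gap $\mu$ is \emph{strictly} positive, so the power $h^{\mu}$ beats any fixed constant as $h\to 0$. (If one wished for a version valid for every $h\in\,]0,1[$ one would keep the constant $K_\varepsilon^{1/(1+\varepsilon)}$ explicit and absorb it together with $h^{\mu}$; but since every other estimate in this subsection is stated for $h$ small enough, I will state and prove Proposition~3.26 in that regime.)
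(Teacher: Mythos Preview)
Your proof is correct and follows essentially the same route as the paper's: both unwind the definition of $t_{h^s}(\varepsilon)$ and reduce everything to the exponent comparison $-\tfrac{s}{1+\varepsilon}>1-2\min\delta_i$, which is immediate from $\min\delta_i>\tfrac23$ and $s<1-\max\delta_i'$. Your treatment of the stray factor $\tfrac12$ (absorbing it via the strict exponent gap $\mu>0$ for $h$ small) is in fact slightly cleaner than the paper's, which instead invokes $K_\varepsilon^{1/(1+\varepsilon)}\le\tfrac12$.
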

\begin{proof}
For all $s>0$ and for all $\varepsilon>0$ , since \[
t_{h^{S}}(\varepsilon)=\frac{1}{\sqrt{\mathbf{a}^{2}+\mathbf{b}^{2}}}\left(\left(K_{\varepsilon}\right)^{\frac{1}{1+\varepsilon}}h^{-\frac{s}{1+\varepsilon}}-\frac{\sqrt{2}}{2}\right)\]
for $h\rightarrow0$ we have the equivalence \[
t_{h^{s}}\sim D_{\varepsilon}h^{-\frac{s}{1+\varepsilon}}\]
where $D_{\varepsilon}:=\frac{1}{\sqrt{\mathbf{a}^{2}+\mathbf{b}^{2}}}\left(K_{\varepsilon}\right)^{\frac{1}{1+\varepsilon}}>0$.
On the other hand, we see that for all $\varepsilon>0$ we have $1-2{\displaystyle \min\delta_{i}\leq}\frac{\max\delta_{i}^{\prime}-1}{1+\varepsilon}.$
Hence \[
h^{\frac{\max\delta_{i}^{\prime}-1}{1+\varepsilon}}\leq h^{1-2{\displaystyle \min\delta_{i}}}.\]
Therefore, we obtain \[
t_{h^{S}}(\varepsilon)\leq\frac{1}{\sqrt{\mathbf{a}^{2}+\mathbf{b}^{2}}}\left(\left(K_{\varepsilon}\right)^{\frac{1}{1+\varepsilon}}h^{\frac{\max\delta_{i}^{\prime}-1}{1+\varepsilon}}-\frac{\sqrt{2}}{2}\right)\]
\[
\leq\frac{1}{\sqrt{\mathbf{a}^{2}+\mathbf{b}^{2}}}\left(\left(K_{\varepsilon}\right)^{\frac{1}{1+\varepsilon}}h^{1-2{\displaystyle \min\delta_{i}}}-\frac{\sqrt{2}}{2}\right)\leq\frac{1}{\sqrt{\mathbf{a}^{2}+\mathbf{b}^{2}}}\left(\frac{1}{2}h^{1-2{\displaystyle \min\delta_{i}}}-\frac{\sqrt{2}}{2}\right).\]

\end{proof}

\subsection*{Use of continued fractions}

Now, we can wonder what are the accurate times when $d\left(\varphi_{t},\mathbb{Z}_{*}^{2}\right)<\eta$
? To solve this problem we will use the continued fraction theory.

\subsubsection*{Some useful theorems}

The continued fractions are essentially used for the approximation
of real numbers. There exists two types of continued fractions: the
finite continued fractions representing rational numbers and the infinite
continued fractions representing irrational numbers. For all irrational
number $\theta,$ there exists a pair sequence $\left(q_{n},p_{n}\right)\in$$\mathbb{N}^{2}$
such that \[
\left|\theta-\frac{p_{n}}{q_{n}}\right|\leq\frac{1}{q_{n}^{2}}\]
holds for all $n\geq0$. This sequence is given by the continued fractions
algorithm (see \textbf{{[}Ro-Sz{]}, {[}Khi{]}}). Geometrically speaking,
the construction principle for this sequence is as follows (see \textbf{{[}Arn2{]}})
: consider $v_{0}:=(0,1)$ and $v_{-1}:=(1,0)$. It is obvious that
these points lie on different sides of the line $y=\theta x$. By
induction : let the vectors $v_{k-1}$ and $v_{k}$ be constructed
whereas to construct the new vector $v_{k+1}$, we add to the vector
$v_{k-1}$ the vector $v_{k}$ as many times as we can in such a way
the new vector $v_{k+1}$ lies on the same side of the line $y=\theta x$
as the vector $v_{k-1}$ :

\[
v_{k+1}=a_{k}v_{k}+v_{k-1}\]
i.e\[
\left\{ \begin{array}{cc}
q_{k+1}=a_{k}q_{k}+q_{k-1}\\
\\p_{k+1}=a_{k}p_{k}+p_{k-1}\end{array}\right.\]
where $\left(a_{k}\right)_{k\geq0}$ is a sequence of integers strictly
$>0$. 

\vspace {0.25cm}

\vspace {0.25cm}

We note that the sequence $\left(q_{n}\right)_{n}$ is strictly increasing.
With the standard notation continued fractions we have \foreignlanguage{english}{\[
\left[a_{0},a_{1},\ldots,a_{n},\ldots\right]:=a_{0}+\frac{1}{a_{1}+\frac{1}{a_{2}+\frac{1}{a_{3}+\ldots}}}\]
}end we have (see for example \textbf{{[}Khi{]}}) the relation \foreignlanguage{english}{$\left[a_{0},a_{1},\ldots,a_{n}\right]=\frac{p_{n}}{q_{n}}.$}
\begin{example}
The number $\pi$ is given by : $\pi=\left[3,7,15,1,292,1\ldots\right].$
\end{example}

\subsubsection*{Approach time }

Let us denote by $D$ the line $y=\frac{\mathbf{b}}{\mathbf{a}}x=\theta x$.
Hence, for all $t\geq0$ we have : \[
\left[O\varphi_{t}\right]\subset D.\]
For a fixed $n\geq0$ we wish to find the point $M_{n}$ of $D$ such
that $d\left(M_{n},\left(q_{n},p_{n}\right)\right)=d\left(D,\left(q_{n},p_{n}\right)\right).$
In other words, we wish to find the time $\tau_{n}$ such that $d\left(\varphi_{\tau_{n}},\left(q_{n},p_{n}\right)\right)=d\left(D,\left(q_{n},p_{n}\right)\right).$
\begin{prop}
For all $n\geq1$ the unique $\tau_{n}\geq0$ such that $d\left(\varphi_{\tau_{n}},\left(q_{n},p_{n}\right)\right)=d\left(D,\left(q_{n},p_{n}\right)\right)$
is given by \[
\tau_{n}=\frac{\mathbf{a}q_{n}+\mathbf{b}p_{n}}{\mathbf{a}^{2}+\mathbf{b}^{2}}.\]
Moreover we have \[
d\left(\varphi_{\tau_{n}},\mathbb{Z}_{*}^{2}\right)\leq\frac{\mathbf{a}}{\sqrt{\mathbf{a}^{2}+\mathbf{b}^{2}}}\frac{1}{q_{n}}<\frac{1}{q_{n}}.\]
\end{prop}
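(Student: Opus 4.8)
The plan is to identify $\varphi_{\tau_n}$ as the orthogonal projection of the lattice point $(q_n,p_n)$ onto the line $D$, and then to bound the resulting distance using the convergent inequality $\left|\theta-\frac{p_n}{q_n}\right|\le\frac{1}{q_n^{2}}$ recalled above. First I would note that $t\mapsto\varphi_t=(\mathbf{a}t,\mathbf{b}t)$ is an injective parametrization of $D=\textrm{Vect}(\mathbf{a}e_1+\mathbf{b}e_2)$, and that the point of a line nearest to an external point is its orthogonal projection, which is unique. Hence there is exactly one $\tau_n$ with $d\left(\varphi_{\tau_n},(q_n,p_n)\right)=d\left(D,(q_n,p_n)\right)$, namely the one for which $\varphi_{\tau_n}$ is that projection. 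Projecting $(q_n,p_n)$ onto the line spanned by the unit vector $\frac{1}{\sqrt{\mathbf{a}^{2}+\mathbf{b}^{2}}}(\mathbf{a},\mathbf{b})$ gives $\varphi_{\tau_n}=\frac{\mathbf{a}q_n+\mathbf{b}p_n}{\mathbf{a}^{2}+\mathbf{b}^{2}}(\mathbf{a},\mathbf{b})$, i.e. $\tau_n=\frac{\mathbf{a}q_n+\mathbf{b}p_n}{\mathbf{a}^{2}+\mathbf{b}^{2}}$; since $\mathbf{a},\mathbf{b}>0$ and $q_n\ge 1$, $p_n\ge 0$, this is indeed $\ge 0$.

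For the estimate I would use that $(q_n,p_n)\in\mathbb{Z}_*^{2}$ (for $n\ge 1$ the recursion together with the strict growth of $(q_n)_n$ give $q_n\ge 1$, so the point is nonzero), whence $d\left(\varphi_{\tau_n},\mathbb{Z}_*^{2}\right)\le d\left(\varphi_{\tau_n},(q_n,p_n)\right)=d\left(D,(q_n,p_n)\right)$. Writing $D$ as $\{\mathbf{b}x-\mathbf{a}y=0\}$, the point-to-line distance formula yields
\[
d\left(D,(q_n,p_n)\right)=\frac{\left|\mathbf{b}q_n-\mathbf{a}p_n\right|}{\sqrt{\mathbf{a}^{2}+\mathbf{b}^{2}}}=\frac{\mathbf{a}q_n}{\sqrt{\mathbf{a}^{2}+\mathbf{b}^{2}}}\left|\theta-\frac{p_n}{q_n}\right|\le\frac{\mathbf{a}}{\sqrt{\mathbf{a}^{2}+\mathbf{b}^{2}}}\cdot\frac{1}{q_n},
\]
using $\theta=\mathbf{b}/\mathbf{a}$ and the convergent inequality. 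Finally $\frac{\mathbf{a}}{\sqrt{\mathbf{a}^{2}+\mathbf{b}^{2}}}<1$ because $\mathbf{b}>0$, which gives the strict bound $d\left(\varphi_{\tau_n},\mathbb{Z}_*^{2}\right)<\frac{1}{q_n}$.

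There is no genuine obstacle here: the argument is essentially a one-line orthogonal-projection computation followed by the standard convergent estimate. The only points deserving a word of care are, first, the uniqueness of $\tau_n$, which I would pin down via injectivity of the parametrization $t\mapsto\varphi_t$ together with uniqueness of the foot of the perpendicular, and second, checking that $(q_n,p_n)$ really lies in $\mathbb{Z}_*^{2}$ rather than being the origin, which is exactly where the hypothesis $n\ge 1$ is used.
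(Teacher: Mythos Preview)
Your argument is correct and follows essentially the same approach as the paper: both identify $\varphi_{\tau_n}$ as the foot of the perpendicular from $(q_n,p_n)$ to $D$, compute $\tau_n$ accordingly, and then bound $d(\varphi_{\tau_n},(q_n,p_n))$ via the convergent inequality $|\theta-p_n/q_n|\le 1/q_n^{2}$, finally passing to $d(\varphi_{\tau_n},\mathbb{Z}_*^{2})$ by monotonicity. The only cosmetic difference is that you invoke the point--line distance formula directly, whereas the paper computes the coordinates of the projection and then the Euclidean distance explicitly; your version is slightly tidier but mathematically identical.
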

\begin{proof}
For $n\neq0$ fixed, we want to find $t\geq0$ such that $d\left(\varphi_{t},\left(q_{n},p_{n}\right)\right)=d\left(D,\left(q_{n},p_{n}\right)\right).$
This means that we want to find $t\geq0$ such that \[
\overrightarrow{O\varphi_{t}}\perp\left(\left(q_{n}e_{1}+p_{n}e_{2}\right)\mathbf{-}\overrightarrow{O\varphi_{t}}\right)\]
i.e. : to find $t\geq0$ such that\[
\left\langle \overrightarrow{O\varphi_{t}},\left(q_{n}e_{1}+p_{n}e_{2}\right)\mathbf{-}\overrightarrow{O\varphi_{t}}\right\rangle _{\mathbb{R}^{2}}=0.\]
Consequently we solve the equation $\mathbf{a}t(q_{n}-\mathbf{a}t)+\mathbf{b}t(p_{n}-\mathbf{b}t)=0$
and we find for non-null solution : $t=\frac{\mathbf{a}q_{n}+\mathbf{b}p_{n}}{\mathbf{a}^{2}+\mathbf{b}^{2}}.$
Therefore, at time $t=\tau_{n}:=\frac{\mathbf{a}q_{n}+\mathbf{b}p_{n}}{\mathbf{a}^{2}+\mathbf{b}^{2}}$
we obtain \[
d\left(\varphi_{\tau_{n}},\left(q_{n},p_{n}\right)\right)=d\left(\left(\frac{\mathbf{a}^{2}q_{n}+\mathbf{ab}p_{n}}{\mathbf{a}^{2}+\mathbf{b}^{2}},\frac{\mathbf{ab}q_{n}+\mathbf{b}^{2}p_{n}}{\mathbf{a}^{2}+\mathbf{b}^{2}}\right),\left(q_{n},p_{n}\right)\right)\]
\[
=\frac{1}{\mathbf{a}^{2}+\mathbf{\mathbf{b}^{2}}}\sqrt{\mathbf{b}^{2}\left(\mathbf{a}p_{n}-\mathbf{b}q_{n}\right)^{2}+\mathbf{a}^{2}\left(\mathbf{b}q_{n}-\mathbf{a}p_{n}\right)^{2}}.\]
Since for all integer $n$ we know that $\left|\theta-\frac{p_{n}}{q_{n}}\right|\leq\frac{1}{q_{n}^{2}}$,
i.e. $\left|q_{n}\mathbf{b}-p_{n}\mathbf{a}\right|\leq\frac{\mathbf{a}}{q_{n}}$
holds for all integer $n$, so we deduce that \[
d\left(\varphi_{\tau_{n}},\left(q_{n},p_{n}\right)\right)\leq\frac{1}{\mathbf{a}^{2}+\mathbf{b}^{2}}\sqrt{\mathbf{b}^{2}\frac{\mathbf{a}^{2}}{q_{n}^{2}}+\mathbf{a}^{2}\frac{\mathbf{a}^{2}}{q_{n}^{2}}}\leq\frac{\mathbf{a}}{\sqrt{\mathbf{a}^{2}+\mathbf{b}^{2}}}\frac{1}{q_{n}}<\frac{1}{q_{n}}.\]
For conclude, we note that \[
d\left(\varphi_{\tau_{n}},\left(q_{n},p_{n}\right)\right)\geq d\left(\varphi_{\tau_{n}},\mathbb{Z}_{*}^{2}\right)\]
holds for all integer $n.$
\end{proof}
We wish to generalize this result : we wish to analyze the behaviour
of the distance between the set $\mathbb{Z}_{*}^{2}$ and the flow
$\varphi_{t}$ when $t$ is in a neighbourhood of the time $\tau_{n}$.
\begin{notation}
For $r>0$ let us denotes by $B(\tau_{n},r)$ the closed ball of center
$\tau_{n}$ and radius $r>0$ :\[
B(\tau_{n},r):=\left[\frac{\mathbf{a}q_{n}+\mathbf{b}p_{n}}{\mathbf{a}^{2}+\mathbf{b}^{2}}-r,\frac{\mathbf{a}q_{n}+\mathbf{b}p_{n}}{\mathbf{a}^{2}+\mathbf{b}^{2}}+r\right].\]
\end{notation}
\begin{prop}
For all $r>0$ \[
d\left(\varphi_{t},\mathbb{Z}_{*}^{2}\right)\leq\frac{1}{\mathbf{a}^{2}+\mathbf{b}^{2}}\sqrt{\left(\left(\frac{\mathbf{ab}}{q_{n}}\right)+r\mathbf{a}(\mathbf{a}^{2}+\mathbf{b}^{2})\right)^{2}+\left(\frac{\mathbf{a^{2}}}{q_{n}}+r\mathbf{b}(\mathbf{a}^{2}+\mathbf{b}^{2})\right)^{2}}\]
holds for all $t\in B(\tau_{n},r).$\end{prop}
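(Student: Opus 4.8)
The plan is to produce the upper bound by keeping, for every $t$ in the ball $B(\tau_n,r)$, the single lattice point $q_ne_1+p_ne_2$ that was the optimal competitor for the centre $\tau_n$ in the previous proposition (the one identifying $\tau_n$ and computing $d(\varphi_{\tau_n},(q_n,p_n))$). Since $q_n\ge 1$ for $n\ge 1$, the point $q_ne_1+p_ne_2$ lies in $\mathbb{Z}_*^2$, hence
\[
d\left(\varphi_t,\mathbb{Z}_*^2\right)\le\left\|\overrightarrow{O\varphi_t}-\left(q_ne_1+p_ne_2\right)\right\|_{\mathbb{R}^2}
\]
for all $t\ge 0$, and it suffices to estimate the right-hand side uniformly for $t\in B(\tau_n,r)$.

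Since $\varphi_t=(\mathbf{a}t,\mathbf{b}t)$ is affine in $t$, for $t\in B(\tau_n,r)$ we write $\overrightarrow{O\varphi_t}=\overrightarrow{O\varphi_{\tau_n}}+(t-\tau_n)(\mathbf{a}e_1+\mathbf{b}e_2)$ with $|t-\tau_n|\le r$, so that
\[
\overrightarrow{O\varphi_t}-\left(q_ne_1+p_ne_2\right)=\left(\overrightarrow{O\varphi_{\tau_n}}-\left(q_ne_1+p_ne_2\right)\right)+(t-\tau_n)(\mathbf{a}e_1+\mathbf{b}e_2).
\]
From the value $\varphi_{\tau_n}=\left(\frac{\mathbf{a}^2q_n+\mathbf{ab}p_n}{\mathbf{a}^2+\mathbf{b}^2},\frac{\mathbf{ab}q_n+\mathbf{b}^2p_n}{\mathbf{a}^2+\mathbf{b}^2}\right)$ obtained in the preceding proof, the vector $\overrightarrow{O\varphi_{\tau_n}}-\left(q_ne_1+p_ne_2\right)$ has coordinates $\frac{\mathbf{b}(\mathbf{a}p_n-\mathbf{b}q_n)}{\mathbf{a}^2+\mathbf{b}^2}$ and $\frac{-\mathbf{a}(\mathbf{a}p_n-\mathbf{b}q_n)}{\mathbf{a}^2+\mathbf{b}^2}$. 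Reducing to the common denominator, the two coordinates of $\overrightarrow{O\varphi_t}-\left(q_ne_1+p_ne_2\right)$ become
\[
\frac{\mathbf{b}(\mathbf{a}p_n-\mathbf{b}q_n)+(t-\tau_n)\mathbf{a}(\mathbf{a}^2+\mathbf{b}^2)}{\mathbf{a}^2+\mathbf{b}^2}\quad\text{and}\quad\frac{-\mathbf{a}(\mathbf{a}p_n-\mathbf{b}q_n)+(t-\tau_n)\mathbf{b}(\mathbf{a}^2+\mathbf{b}^2)}{\mathbf{a}^2+\mathbf{b}^2}.
\]

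Then I would apply the triangle inequality inside each coordinate, using on one hand $\left|\mathbf{a}p_n-\mathbf{b}q_n\right|=\left|q_n\mathbf{b}-p_n\mathbf{a}\right|\le\mathbf{a}/q_n$ (which follows, exactly as in the previous proof, from $\left|\theta-p_n/q_n\right|\le 1/q_n^2$ with $\theta=\mathbf{b}/\mathbf{a}$) and on the other hand $|t-\tau_n|\le r$, together with $\mathbf{a},\mathbf{b},q_n>0$. This bounds the first coordinate by $\frac{1}{\mathbf{a}^2+\mathbf{b}^2}\left(\frac{\mathbf{ab}}{q_n}+r\mathbf{a}(\mathbf{a}^2+\mathbf{b}^2)\right)$ in absolute value and the second by $\frac{1}{\mathbf{a}^2+\mathbf{b}^2}\left(\frac{\mathbf{a}^2}{q_n}+r\mathbf{b}(\mathbf{a}^2+\mathbf{b}^2)\right)$; taking the Euclidean norm of the coordinate vector (via $\|(x,y)\|_{\mathbb{R}^2}\le\sqrt{X^2+Y^2}$ whenever $|x|\le X$, $|y|\le Y$) yields precisely the asserted inequality. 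The argument is elementary and I do not foresee a real obstacle; the only points to watch are verifying that $q_ne_1+p_ne_2$ genuinely lies in $\mathbb{Z}_*^2$ (so that it is an admissible competitor for the distance to $\mathbb{Z}_*^2$) and tracking signs carefully when clearing the factor $\mathbf{a}^2+\mathbf{b}^2$ and splitting each coordinate through $|x+y|\le|x|+|y|$, since $t-\tau_n$ may be negative.
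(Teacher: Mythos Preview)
Your proposal is correct and follows essentially the same route as the paper: both use $(q_n,p_n)\in\mathbb{Z}_*^2$ as the sole competitor, write $\varphi_t-\left(q_n,p_n\right)$ as the displacement at $\tau_n$ plus a term of size at most $r$ in the direction $(\mathbf{a},\mathbf{b})$, and then bound each coordinate via the triangle inequality together with $|q_n\mathbf{b}-p_n\mathbf{a}|\le\mathbf{a}/q_n$. The only cosmetic difference is that the paper phrases the displacement step as ``$\varphi_t$ lies in the rectangle $B(\cdot,r\mathbf{a})\times B(\cdot,r\mathbf{b})$'' before expanding, whereas you write the affine decomposition $\overrightarrow{O\varphi_t}=\overrightarrow{O\varphi_{\tau_n}}+(t-\tau_n)(\mathbf{a}e_1+\mathbf{b}e_2)$ directly; your version is in fact slightly cleaner.
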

\begin{proof}
We begin with the following inequality : for all $t\geq0$ and for
all $n$ \[
d\left(\varphi_{t},\mathbb{Z}_{*}^{2}\right)\leq d\left(\varphi_{t},\left(q_{n},p_{n}\right)\right).\]
Next, it's clear that for all $t\in B(\tau_{n},r)$ we have\[
\varphi_{t}\in B\left(\frac{\mathbf{a}^{2}q_{n}+\mathbf{ab}p_{n}}{\mathbf{a}^{2}+\mathbf{b}^{2}},r\mathbf{a}\right)\times B\left(\frac{\mathbf{ab}q_{n}+\mathbf{b}^{2}p_{n}}{\mathbf{a}^{2}+\mathbf{b}^{2}},r\mathbf{b}\right).\]
Therefore, for all $t\in B(\tau_{n},r)$ we have \[
d\left(\varphi_{t},\left(q_{n},p_{n}\right)\right)\]
\[
\leq\sqrt{\left(\frac{\mathbf{a}^{2}q_{n}+\mathbf{ab}p_{n}}{\mathbf{a}^{2}+\mathbf{b}^{2}}+r\mathbf{a}-q_{n}\right)^{2}+\left(\frac{\mathbf{ab}q_{n}+\mathbf{b}^{2}p_{n}}{\mathbf{a}^{2}+\mathbf{b}^{2}}+r\mathbf{b}-p_{n}\right)^{2}}\]
\[
=\frac{1}{\mathbf{a}^{2}+\mathbf{b}^{2}}\sqrt{\left(\mathbf{ab}p_{n}-\mathbf{b}^{2}q_{n}+r\mathbf{a}(\mathbf{a}^{2}+\mathbf{b}^{2})\right)^{2}+\left(\mathbf{ab}q_{n}-\mathbf{a}^{2}p_{n}+r\mathbf{b}(\mathbf{a}^{2}+\mathbf{b}^{2})\right)^{2}}\]
\[
\leq\frac{1}{\mathbf{a}^{2}+\mathbf{b}^{2}}\sqrt{\left(\left(\frac{\mathbf{ab}}{q_{n}}\right)+r\mathbf{a}(\mathbf{a}^{2}+\mathbf{b}^{2})\right)^{2}+\left(\frac{\mathbf{a^{2}}}{q_{n}}+r\mathbf{b}(\mathbf{a}^{2}+\mathbf{b}^{2})\right)^{2}}.\]
\end{proof}
\begin{rem}
For $r=0$ we obtain\[
d\left(\varphi_{t},\mathbb{Z}_{*}^{2}\right)\leq\frac{1}{\mathbf{a}^{2}+\mathbf{b}^{2}}\sqrt{\left(\left(\frac{\mathbf{ab}}{q_{n}}\right)\right)^{2}+\left(\frac{\mathbf{a^{2}}}{q_{n}}\right)^{2}}\leq\frac{\mathbf{a}}{\sqrt{\mathbf{a}^{2}+\mathbf{b}^{2}}}\frac{1}{q_{n}}.\]
and we obtain again the result of the proposition 3.28.
\end{rem}
Now, let us give an asymptotic equivalent (for $n\rightarrow\infty$)
of the real number $\tau_{n}$ :
\begin{prop}
For $n\rightarrow\infty$ we have $\tau_{n}\sim\Omega q_{n}$; where
$\Omega:=\frac{\mathbf{a}+\mathbf{b}\theta}{\mathbf{a}^{2}+\mathbf{b}^{2}}>0.$ \end{prop}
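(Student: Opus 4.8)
The plan is to start directly from the closed form for $\tau_n$ obtained in Proposition 3.28, namely $\tau_n = \dfrac{\mathbf{a}q_n + \mathbf{b}p_n}{\mathbf{a}^2+\mathbf{b}^2}$, and to divide by $q_n$. Writing
\[
\frac{\tau_n}{q_n} = \frac{1}{\mathbf{a}^2+\mathbf{b}^2}\left(\mathbf{a} + \mathbf{b}\,\frac{p_n}{q_n}\right),
\]
the whole question reduces to the behaviour of the ratio $p_n/q_n$ as $n\to\infty$.

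Next I would recall that the convergents of the continued fraction expansion of $\theta = \mathbf{b}/\mathbf{a}$ satisfy $\left|\theta - \frac{p_n}{q_n}\right| \le \frac{1}{q_n^{2}}$ for all $n\ge 0$, and that the denominators $(q_n)_n$ form a strictly increasing sequence of positive integers, hence $q_n \to +\infty$. Consequently $\frac{p_n}{q_n} \to \theta$, so that $\mathbf{a} + \mathbf{b}\,\frac{p_n}{q_n} \to \mathbf{a} + \mathbf{b}\theta$. Substituting into the displayed identity gives $\frac{\tau_n}{q_n} \to \frac{\mathbf{a}+\mathbf{b}\theta}{\mathbf{a}^2+\mathbf{b}^2} = \Omega$, which is exactly the asserted equivalence $\tau_n \sim \Omega q_n$.

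It only remains to check that $\Omega$ is genuinely positive, so that the equivalence is meaningful and $\tau_n \to +\infty$: since $\mathbf{a} = -a/2\pi > 0$, $\mathbf{b} = -b/2\pi > 0$ and $\theta = \mathbf{b}/\mathbf{a} > 0$, the numerator $\mathbf{a}+\mathbf{b}\theta$ is a sum of positive terms, whence $\Omega > 0$. There is no serious obstacle in this argument; the only point deserving a word is that $q_n\to +\infty$, so that the error term $\mathbf{b}\bigl(\frac{p_n}{q_n}-\theta\bigr)$ really does vanish in the limit, and this is guaranteed by the monotonicity of $(q_n)_n$ already observed in the description of the continued fraction algorithm.
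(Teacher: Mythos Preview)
Your proof is correct and follows essentially the same approach as the paper: both start from the closed form $\tau_n = \dfrac{\mathbf{a}q_n+\mathbf{b}p_n}{\mathbf{a}^2+\mathbf{b}^2}$, form a quotient involving $q_n$, and pass to the limit using $p_n/q_n\to\theta$. The only cosmetic difference is that the paper computes $\tau_n/(\Omega q_n)\to 1$ whereas you compute $\tau_n/q_n\to\Omega$, and you additionally spell out why $\Omega>0$, which the paper asserts but does not justify.
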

\begin{proof}
We just write the fraction $\tau_{n}/\Omega q_{n}$ :\[
\frac{\tau_{n}}{\Omega q_{n}}=\frac{\mathbf{a}q_{n}+\mathbf{b}p_{n}}{\mathbf{a}^{2}+\mathbf{b}^{2}}\frac{\mathbf{a}^{2}+\mathbf{b}^{2}}{q_{n}(\mathbf{a}+\mathbf{b}\theta)}=\frac{\mathbf{a}}{\mathbf{a}+\mathbf{b}\theta}+\frac{\mathbf{b}p_{n}}{q_{n}(\mathbf{a}+\mathbf{b}\theta)}\]
and since $\lim_{n\rightarrow\infty}p_{n}/q_{n}=\theta$ we obtain
that  $\lim_{n\rightarrow\infty}\tau_{n}/\Omega q_{n}=1.$
\end{proof}
Now, let us come back to the autocorrelation function approximation
$\mathbf{a_{1}}$. Start by a notation and a remark : 
\begin{notation}
For $\mu>0$ let us denotes by $\mathcal{A}_{h}=\mathcal{A}_{h}(\theta,\mu)$
the following set :\[
\mathcal{A}_{h}:=\left\{ q_{n}\in\mathbb{N},\, q_{n}\in\left[h^{\min\delta_{i}^{\prime}-1+\mu},h^{1-2\min\delta_{i}-\mu}\right]\right\} .\]
\end{notation}
\begin{rem}
For $\mu>0$, we have of course $\left[h^{\min\delta_{i}^{\prime}-1-\mu},h^{1-2\min\delta_{i}+\mu}\right]\subset\left[h^{\min\delta_{i}^{\prime}-1},h^{1-2\min\delta_{i}}\right].$

If we suppose that the set $\mathcal{A}_{h}$ is non empty, we have
some periods for the function $\mathbf{a_{1}}$, indeed we have : \end{rem}
\begin{thm}
Suppose $\mathcal{A}_{h}\neq\emptyset,$ then\[
\sup_{n\in\{m\in\mathbb{N},\, q_{m}(\theta)\in\mathcal{A}_{h}\}}\left|\mathbf{\mathbf{a_{1}}}(\tau_{n})-1\right|=O(h^{\mu}).\]
\end{thm}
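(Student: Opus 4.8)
The plan is to use Lemma 3.10, which reduces $\mathbf{a_1}(\tau_n)$ (up to $O(h^\infty)$) to evaluating $\mathfrak{F}(\chi^2)$ at the point with coordinates $-\frac{h^{\delta_1'-1}}{\omega_1}d(T_{cl_1}\mathbb{Z},\tau_n)$ and $-\frac{h^{\delta_2'-1}}{\omega_2}d(T_{cl_2}\mathbb{Z},\tau_n)$. The strategy is twofold: first show that these two arguments are $O(h^{\mu/2})$ (or some fixed positive power of $h$) uniformly over $n$ with $q_n\in\mathcal{A}_h$, so that the point tends to the origin; then Taylor-expand $\mathfrak{F}(\chi^2)$ near $(0,0)$, using $\mathfrak{F}(\chi^2)(0,0)$ in the denominator, to conclude $\mathbf{a_1}(\tau_n)=1+O(h^\mu)$. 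I will need both an upper bound on the distances (to control the error) and the fact that $\chi^2\ge 0$ ensures $\mathfrak{F}(\chi^2)(0,0)=\int\chi^2>0$, so the division is legitimate.

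First I would estimate $d(T_{cl_i}\mathbb{Z},\tau_n)$ from above. By the identification $T_{cl_1}=1/\mathbf{a}$, $T_{cl_2}=1/\mathbf{b}$ used earlier, $d(T_{cl_1}\mathbb{Z},\tau_n)=\frac1{\mathbf a}d(\mathbf a\tau_n,\mathbb Z)$ and similarly for the second. By Proposition 3.28, $d(\varphi_{\tau_n},\mathbb{Z}_*^2)\le \mathbf a/(\sqrt{\mathbf a^2+\mathbf b^2})\cdot q_n^{-1}<1/q_n$, and since $\varphi_{\tau_n}=(\mathbf a\tau_n,\mathbf b\tau_n)$, this controls both $d(\mathbf a\tau_n,\mathbb Z)$ and $d(\mathbf b\tau_n,\mathbb Z)$ by $1/q_n$. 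Hence
\[
\frac{h^{\delta_i'-1}}{\omega_i}d(T_{cl_i}\mathbb{Z},\tau_n)\le \frac{C}{\omega_i}\,\frac{h^{\delta_i'-1}}{q_n}.
\]
Now use $q_n\in\mathcal{A}_h$, i.e. $q_n\ge h^{\min\delta_i'-1+\mu}$, which gives $h^{\delta_i'-1}/q_n\le h^{\delta_i'-1}h^{1-\min\delta_i'-\mu}=h^{\delta_i'-\min\delta_i'-\mu}\le h^{-\mu}$ — that is the wrong sign. So the lower endpoint of $\mathcal{A}_h$ must be read the other way: $q_n$ large compared to $h^{-(1-\delta_i')}$. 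Indeed $h<1$ so $h^{\min\delta_i'-1+\mu}$ is a large number; then $h^{\delta_i'-1}/q_n\le h^{\delta_i'-1}/h^{\min\delta_i'-1+\mu}=h^{\delta_i'-\min\delta_i'-\mu}$, and since $\delta_i'\ge\min\delta_j'$ this is $\le h^{-\mu}\cdot h^{\delta_i'-\min\delta_i'}$... still not manifestly small. The cleanest route: take $q_n\ge h^{\min\delta_i'-1+\mu}$ means $1/q_n\le h^{1-\min\delta_i'-\mu}$, so $h^{\delta_i'-1}/q_n\le h^{\delta_i'-\min\delta_i'-\mu}$. With $\mu$ chosen small enough relative to the gap, or simply absorbing, we get each argument of $\mathfrak F(\chi^2)$ is $O(h^{-\mu})$ which is NOT small; therefore the intended reading is that the arguments are small, which forces $q_n$ to dominate $h^{\delta_i'-1}$, consistent with $q_n\in[h^{\min\delta_i'-1+\mu},\dots]$ only if $h^{\min\delta_i'-1+\mu}$ exceeds $h^{\delta_i'-1}$, i.e. $\mu>\delta_i'-\min\delta_i'\ge 0$; taking $\mu$ larger than all such gaps, the bound $h^{\delta_i'-1}/q_n\le h^{\delta_i'-\min\delta_i'-\mu}<1$ becomes a genuine decaying power, call it $h^{\nu}$ with $\nu=\mu-\max(\delta_i'-\min\delta_i')>0$.

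Second I would plug this into Lemma 3.10. Writing $\zeta_i:=-\frac{h^{\delta_i'-1}}{\omega_i}d(T_{cl_i}\mathbb{Z},\tau_n)$, we have shown $|\zeta_i|=O(h^{\nu})$, uniformly over the relevant $n$. Since $\mathfrak{F}(\chi^2)\in\mathcal S(\mathbb R^2)$ is $C^\infty$, the mean value inequality gives $|\mathfrak F(\chi^2)(\zeta_1,\zeta_2)-\mathfrak F(\chi^2)(0,0)|\le \|\nabla\mathfrak F(\chi^2)\|_\infty(|\zeta_1|+|\zeta_2|)=O(h^\nu)$. Dividing by the positive constant $\mathfrak F(\chi^2)(0,0)=\int_{\mathbb R^2}\chi^2>0$, we obtain
\[
\mathbf{a_1}(\tau_n)=\frac{\mathfrak F(\chi^2)(\zeta_1,\zeta_2)}{\mathfrak F(\chi^2)(0,0)}+O(h^\infty)=1+O(h^\nu)+O(h^\infty),
\]
and taking the supremum over $\{n:q_n\in\mathcal A_h\}$ — all estimates being uniform in $n$ since the only $n$-dependence entered through $q_n$, which is bounded below throughout $\mathcal A_h$ — yields $\sup_n|\mathbf{a_1}(\tau_n)-1|=O(h^\nu)$, hence $O(h^\mu)$ after renaming the exponent (the statement's $\mu$ being any admissible choice of the parameter defining $\mathcal A_h$, possibly with a harmless constant rescaling).

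The main obstacle is bookkeeping the exponents in the definition of $\mathcal{A}_h$: one must verify that membership $q_n\in[h^{\min\delta_i'-1+\mu},h^{1-2\min\delta_i-\mu}]$ really does force $h^{\delta_i'-1}d(T_{cl_i}\mathbb Z,\tau_n)$ to be a positive power of $h$ for \emph{each} $i\in\{1,2\}$ simultaneously — the lower endpoint $h^{\min\delta_i'-1+\mu}$ is calibrated to $\min\delta_i'$, not to each $\delta_i'$ individually, so the worst case is the coordinate with the largest $\delta_i'$, and one checks that the margin $\mu$ is exactly what compensates. The upper endpoint $h^{1-2\min\delta_i-\mu}$ is not needed for this theorem (it is the constraint ensuring $\tau_n$ lies within the validity window $[0,h^\alpha]$ of the linear approximation, via Proposition 3.22 and the equivalence $\tau_n\sim\Omega q_n$ of Proposition 3.31), so I would remark that it is only there to guarantee $\tau_n$ is an admissible time, not used in the estimate itself. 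Everything else is a routine Schwartz-decay/Taylor argument.
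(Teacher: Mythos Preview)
Your approach is exactly the paper's: apply Lemma~3.10 to reduce $\mathbf{a_1}(\tau_n)$ to $\mathfrak{F}(\chi^2)$ evaluated near the origin, use Proposition~3.28 to bound $d(\varphi_{\tau_n},\mathbb{Z}_*^2)<1/q_n$ (hence each $d(T_{cl_i}\mathbb{Z},\tau_n)$), convert the lower bound on $q_n$ coming from $\mathcal{A}_h$ into an upper bound on $h^{\delta_i'-1}d(T_{cl_i}\mathbb{Z},\tau_n)$, and finish with a first-order Taylor expansion of $\mathfrak{F}(\chi^2)$ at $(0,0)$.

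The exponent difficulty you flag is not a flaw in your argument but a typo in the paper's definition of $\mathcal{A}_h$. The paper's own proof assumes $1/q_n\le h^s$ with $s\ge 1-\min\delta_i'+\mu$, i.e.\ $q_n\ge h^{\min\delta_i'-1-\mu}$, and the Remark immediately following the definition also writes the interval as $[h^{\min\delta_i'-1-\mu},h^{1-2\min\delta_i+\mu}]$; only the displayed definition has $+\mu$ in the lower endpoint. With the intended sign, $h^{\delta_i'-1}/q_n\le h^{\delta_i'-\min\delta_i'+\mu}\le h^\mu$ for every $i$, and your mean-value step gives the claimed $O(h^\mu)$ directly, with no need to introduce an auxiliary exponent $\nu$ or to rename parameters. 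Your remark that the upper endpoint of $\mathcal{A}_h$ serves only to keep $\tau_n$ inside the validity window $[0,h^\alpha]$ (via $\tau_n\sim\Omega q_n$) is also correct and matches the paper's intent.
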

\begin{proof}
Applying the Taylor-Lagrange formula on the function $(x,y)\mapsto\mathfrak{F}\left(\chi^{2}\right)(x,y)$
near the origin : for all $t\geq0$ there exists $\theta=\theta\left(t,h,T_{cl_{1}},T_{cl_{2}}\right)\in\left]0,1\right[$
such that\[
\mathfrak{F}\left(\chi^{2}\right)\left(-\frac{h^{\delta_{1}^{\prime}-1}}{\omega_{1}}d\left(T_{cl_{1}}\mathbb{Z},t\right),-\frac{h^{\delta_{2}^{\prime}-1}}{\omega_{2}}d\left(T_{cl_{2}}\mathbb{Z},t\right)\right)=\mathfrak{F}\left(\chi^{2}\right)\left(0,0\right)\]
\[
-\frac{h^{\delta_{1}^{\prime}-1}}{\omega_{1}}d\left(T_{cl_{1}}\mathbb{Z},t\right)\frac{\partial\mathfrak{F}\left(\chi^{2}\right)}{\partial x}\left(\theta\frac{h^{\delta_{1}^{\prime}-1}}{\omega_{1}}d\left(T_{cl_{1}}\mathbb{Z},t\right),\theta\frac{h^{\delta_{2}^{\prime}-1}}{\omega_{2}}d\left(T_{cl_{2}}\mathbb{Z},t\right)\right)\]

\[
-\frac{h^{\delta_{2}^{\prime}-1}}{\omega_{2}}d\left(T_{cl_{2}}\mathbb{Z},t\right)\frac{\partial\mathfrak{F}\left(\chi^{2}\right)}{\partial y}\left(\theta\frac{h^{\delta_{1}^{\prime}-1}}{\omega_{1}}d\left(T_{cl_{1}}\mathbb{Z},t\right),\theta\frac{h^{\delta_{2}^{\prime}-1}}{\omega_{2}}d\left(T_{cl_{2}}\mathbb{Z},t\right)\right).\]
We know that for all $n\geq1$ the distance between the part of the
flow $\varphi_{\tau_{n}}$ and the set $\mathbb{Z}_{*}^{2}$ is strictly
lower than $\frac{1}{q_{n}}$. Hence, if we suppose that from a certain
point, like $n\geq N$ , the sequence $\left(\frac{1}{q_{n}}\right)_{n}$
is strictly lower than $h^{s}$ (with $s>0)$, then we obtain the
majorization $d\left(\varphi_{\tau_{n}},\mathbb{Z}_{*}^{2}\right)\leq h^{s}$.
Therefore, for all $i\in\{1,2\}$ we have also $d\left(\tau_{n},T_{cl_{i}}\mathbb{Z}\right)\leq h^{s}.$
Consequently for all $i\in\{1,2\}$ we get \foreignlanguage{english}{$h^{\delta_{i}^{\prime}-1}d\left(\tau_{n},T_{cl_{i}}\mathbb{Z}\right)\leq h^{\delta_{i}^{\prime}-1+s}$
}. Since we suppose $s\geq-\min\delta_{i}^{\prime}+1+\mu$ with $\mu>0$
we deduce that : \foreignlanguage{english}{\[
h^{\delta_{i}^{\prime}-1}d\left(\tau_{n},T_{cl_{i}}\mathbb{Z}\right)\leq h^{\mu};\]
}and, for $h$ small enough, we obtain \[
\left|\frac{h^{\delta_{1}^{\prime}-1}}{\omega_{1}}d\left(T_{cl_{1}}\mathbb{Z},\tau_{n}\right)\frac{\partial\mathfrak{F}\left(\chi^{2}\right)}{\partial x}\left(\theta\frac{h^{\delta_{1}^{\prime}-1}}{\omega_{1}}d\left(T_{cl_{1}}\mathbb{Z},\tau_{n}\right),\theta\frac{h^{\delta_{2}^{\prime}-1}}{\omega_{2}}d\left(T_{cl_{2}}\mathbb{Z},t\tau_{n}\right)\right)\right|\leq Mh^{\mu},\]
\[
\left|\frac{h^{\delta_{2}^{\prime}-1}}{\omega_{2}}d\left(T_{cl_{2}}\mathbb{Z},\tau_{n}\right)\frac{\partial\mathfrak{F}\left(\chi^{2}\right)}{\partial y}\left(\theta\frac{h^{\delta_{1}^{\prime}-1}}{\omega_{1}}d\left(T_{cl_{1}}\mathbb{Z},\tau_{n}\right),\theta\frac{h^{\delta_{2}^{\prime}-1}}{\omega_{2}}d\left(T_{cl_{2}}\mathbb{Z},t\tau_{n}\right)\right)\right|\leq Nh^{\mu};\]
where $M,\, M^{\prime}>0$ are constant which does not depend on $h$.
Next, it comes from the Taylor formula written above that \[
\sup_{n\in\{m\in\mathbb{N},\, q_{m}(\theta)\in\mathcal{A}_{h}\}}\left|\mathbf{\mathbf{a_{1}}}(\tau_{n})-1\right|=\frac{1}{\mathfrak{F}\left(\chi^{2}\right)(0,0)}\]
\[
\sup_{n\in\{m\in\mathbb{N},\, q_{m}(\theta)\in\mathcal{A}_{h}\}}\left|\frac{h^{\delta_{1}^{\prime}-1}}{\omega_{1}}d\left(T_{cl_{1}}\mathbb{Z},\tau_{n}\right)\frac{\partial\mathfrak{F}\left(\chi^{2}\right)}{\partial x}\left(\theta\frac{h^{\delta_{1}^{\prime}-1}}{\omega_{1}}d\left(T_{cl_{1}}\mathbb{Z},\tau_{n}\right),\theta\frac{h^{\delta_{2}^{\prime}-1}}{\omega_{2}}d\left(T_{cl_{2}}\mathbb{Z},\tau_{n}\right)\right)\right.\]
\[
\left.\frac{h^{\delta_{2}^{\prime}-1}}{\omega_{2}}d\left(T_{cl_{2}}\mathbb{Z},\tau_{n}\right)\frac{\partial\mathfrak{F}\left(\chi^{2}\right)}{\partial y}\left(\theta\frac{h^{\delta_{1}^{\prime}-1}}{\omega_{1}}d\left(T_{cl_{1}}\mathbb{Z},\tau_{n}\right),\theta\frac{h^{\delta_{2}^{\prime}-1}}{\omega_{2}}d\left(T_{cl_{2}}\mathbb{Z},\tau_{n}\right)\right)\right|\]
\[
\leq h^{\mu}M.\]

\end{proof}

\subsection*{Counting of the sequence $q_{n}$}

In view of the theorem above, let us now tackle an important problem
: what is the cardinality of the set $\mathcal{A}_{h}$ ? Note that
since the sequence $\left(q_{n}(\theta)\right)_{n\in\mathbb{N}}$
is strictly increasing we have \[
\#\left\{ \mathcal{A}_{h}\right\} =\#\left\{ n\in\mathbb{N},\, q_{n}(\theta)\in\mathcal{A}_{h}\right\} .\]
Start by a simple majorization of the integer $\#\left\{ \mathcal{A}_{h}\right\} $
:\[
\#\left\{ \mathcal{A}_{h}\right\} \leq\#\left\{ \mathbb{N}\cap\left[h^{\min\delta_{i}^{\prime}-1-\mu},h^{1-2\min\delta_{i}+\mu}\right]\right\} \leq E\left[\delta(h)\right]+1\]
where $\delta(h):=h^{1-2\min\delta_{i}+\mu}-h^{\min\delta_{i}^{\prime}-1-\mu}$
and $E[x]$ denotes the integer part of $x$. Then, for $h\rightarrow0$
we have the equivalence $\delta(h)\sim h^{1-2\min\delta_{i}+\mu}.$
So we get a majorization of the integer $\#\left\{ \mathcal{A}_{h}\right\} $
in order $h^{1-2\min\delta_{i}+\mu}$.

Nevertheless, find a minoration of the integer $\#\left\{ \mathcal{A}_{h}\right\} $
is more difficult; but it's cleat that for all $n^{*}>1$ there exists
$h^{*}\in\left]0,1\right[$ such that :\[
\left[h^{^{*}\min\delta_{i}^{\prime}-1-\mu},h^{*1-2\min\delta_{i}+\mu}\right]{\displaystyle \bigcap}\left({\displaystyle \bigcup_{n=0}^{+\infty}}q_{n}(\theta)\right)=\left\{ q_{n^{*}}(\theta)\right\} .\]
In order to estimate the integer $\#\left\{ \mathcal{A}_{h}\right\} ,$
we must know the distribution of the sequence $\left(q_{n}(\theta)\right)_{n\in\mathbb{N}}$
on the real axis (in particular on the compact set $\left[h^{\min\delta_{i}^{\prime}-1-\mu},h^{1-2\min\delta_{i}+\mu}\right]$)
depending on the number $\theta.$ Let's try to give some distribution
examples of the sequance $\left(q_{n}(\theta)\right)_{n\in\mathbb{N}}$
.

\subsubsection*{An exemple : the golden ratio}

The golden ratio $\varphi$ is the unique real roots of $X^{2}-X-1=0$,
i.e. $\varphi=(1+\sqrt{5})/2$. The continued fraction of the golden
ration is :\foreignlanguage{english}{\[
\varphi=\left[1,1,\ldots,1,\ldots\right]=1+\frac{1}{1+\frac{1}{1+\frac{1}{1+\ldots}}}.\]
}Consequently the golden ratio is that one of the most difficult real
number to approximate with rationals numbers. An another particularity
of the golden ratio is that the sequence of the denominators $\left(q_{n}\right)_{n}$
from the continued fraction algorithm is equal to the Fibonacci sequence
$\left(\mathbb{F}_{n}\right)_{n}$ :\[
\mathbb{F}_{n}:=\frac{1}{\sqrt{5}}\left(\frac{1+\sqrt{5}}{2}\right)^{n}-\frac{1}{\sqrt{5}}\left(\frac{1-\sqrt{5}}{2}\right)^{n}.\]
We note that \[
\lim_{n\rightarrow+\infty}\frac{\mathbb{F}_{n+1}}{\mathbb{F}_{n}}=\varphi.\]
Next for $n\rightarrow+\infty$ we have also :\[
\mathbb{F}_{n}\sim\frac{1}{\sqrt{5}}\left(\frac{1+\sqrt{5}}{2}\right)^{n}.\]
We have also the following property : 
\begin{prop}
Denote by $q_{n}(x)$ the sequence of denominators from the continued
fraction algorithm of the number $x$; for all $\theta\in\mathbb{R}$,
$n\geq0$ we have \[
q_{n}(\theta)\geq\mathbb{F}_{n}.\]

\end{prop}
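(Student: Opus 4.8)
The plan is a straightforward strong induction on $n$, exploiting that the only two inputs are the defining recurrence $q_{k+1}=a_kq_k+q_{k-1}$ and the positivity $a_k\geq 1$ of every partial quotient.

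First I would pin down the base data coming from the geometric construction recalled in the excerpt: the seeds $v_{-1}=(1,0)$ and $v_0=(0,1)$ give $q_{-1}=1$, $q_0=0$, and hence $q_1=a_0q_0+q_{-1}=1$. Since each $a_k\geq 1$, the recurrence immediately yields the termwise comparison
\[
q_{k+1}=a_kq_k+q_{k-1}\geq q_k+q_{k-1}\qquad(k\geq 0).
\]
Next I would record that the Fibonacci numbers obey the same additive law: $\varphi=(1+\sqrt5)/2$ and $\psi=(1-\sqrt5)/2$ are the two roots of $X^{2}=X+1$, so both $(\varphi^{n})_n$ and $(\psi^{n})_n$ satisfy $u_{n+1}=u_n+u_{n-1}$, and therefore so does the combination $\mathbb{F}_n=\frac{1}{\sqrt5}(\varphi^{n}-\psi^{n})$; in particular $\mathbb{F}_{k+1}=\mathbb{F}_k+\mathbb{F}_{k-1}$ with $\mathbb{F}_0=0$ and $\mathbb{F}_1=1$.

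The induction itself then runs as follows. The base cases are $q_0=0=\mathbb{F}_0$ and $q_1=1=\mathbb{F}_1$. Assuming $q_{n-1}\geq\mathbb{F}_{n-1}$ and $q_n\geq\mathbb{F}_n$ for some $n\geq 1$, the comparison above gives $q_{n+1}\geq q_n+q_{n-1}\geq\mathbb{F}_n+\mathbb{F}_{n-1}=\mathbb{F}_{n+1}$, which closes the induction and yields $q_n(\theta)\geq\mathbb{F}_n$ for all $n\geq 0$ and all $\theta$ (in the rational case, understood for $n$ within the length of the expansion).

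There is no real obstacle here; the only point demanding a little care is the bookkeeping of the indexing convention fixed by $v_{-1}=(1,0)$, $v_0=(0,1)$, which is precisely what makes both base cases equalities. It is also worth noting the sharpness of the bound: for $\theta=\varphi$ all partial quotients equal $1$, so $q_{k+1}=q_k+q_{k-1}$ and $q_n(\varphi)=\mathbb{F}_n$ for every $n$, i.e. the golden ratio is exactly the extremal case — which is the whole reason this proposition is stated here.
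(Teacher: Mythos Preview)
Your induction is correct and is exactly the classical argument. Note that the paper does not actually prove this proposition: it is stated as a well-known fact (in the spirit of the surrounding references to \textbf{[Khi]} and \textbf{[Arn2]}) and no proof is given. Your write-up fills that gap cleanly; the only minor point to watch is the indexing convention, which you handled correctly by reading off $q_{-1}=1$, $q_0=0$ from the seeds $v_{-1}=(1,0)$, $v_0=(0,1)$ used in the paper, so that the base cases $q_0=\mathbb{F}_0=0$ and $q_1=\mathbb{F}_1=1$ are indeed equalities.
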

In the general case for any $\theta$ irrational number, we have the
following theorem (see for example \textbf{{[}Khi{]}}) :
\begin{thm}
\textbf{(Khintchine-Lévy, 1952)}. Almost surely for $\theta\in\mathbb{R}$
we have\[
\lim_{n\rightarrow+\infty}q_{n}(\theta)^{\frac{1}{n}}=K;\]
where $K$ denotes the Khintchine-Lévy constant $K:=e^{\frac{\pi}{12\ln(2)}}>1.$
\end{thm}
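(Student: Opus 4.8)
The plan is to carry out the classical ergodic-theoretic proof via the Gauss map. Since $q_n(\theta)$ is determined by the partial quotients $a_1,a_2,\dots$ of $\theta$ and not by $\lfloor\theta\rfloor$, and since $\mathbb{Q}$ is Lebesgue-null, I may assume $\theta=[0;a_1,a_2,\dots]\in(0,1)\setminus\mathbb{Q}$. Let $T\colon(0,1)\to[0,1)$, $T(x)=\frac{1}{x}-\lfloor\frac{1}{x}\rfloor$, denote the Gauss map, so that $T^k\theta=[0;a_{k+1},a_{k+2},\dots]$. I would invoke the classical facts (see \textbf{[Khi]}) that the Gauss measure $d\mu=\frac{1}{\ln 2}\frac{dx}{1+x}$ is $T$-invariant and that $T$ is \emph{ergodic} for $\mu$; since $\mu$ and Lebesgue measure are mutually absolutely continuous on $(0,1)$, any $\mu$-almost-everywhere statement holds Lebesgue-almost everywhere. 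This ergodicity is the only genuinely non-trivial ingredient, and I would simply cite it.

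Next I would express $\ln q_n$ as a Birkhoff ergodic sum. From the recursions $p_k=a_kp_{k-1}+p_{k-2}$, $q_k=a_kq_{k-1}+q_{k-2}$ and the standard relation $\theta=\dfrac{p_n+p_{n-1}T^n\theta}{q_n+q_{n-1}T^n\theta}$, one gets $q_k\theta-p_k=-(q_{k-1}\theta-p_{k-1})\,T^k\theta$ for all $k\geq0$ (with the convention $q_{-1}\theta-p_{-1}=-1$); iterating and taking absolute values gives the telescoping identity
\[
\bigl|q_{n-1}\theta-p_{n-1}\bigr|=\prod_{k=0}^{n-1}T^k\theta .
\]
Combined with the elementary two-sided bound $\frac{1}{q_n+q_{n-1}}\leq\bigl|q_{n-1}\theta-p_{n-1}\bigr|\leq\frac{1}{q_n}$ and with $q_{n-1}\leq q_n$, this yields $\tfrac12\leq q_n\prod_{k=0}^{n-1}T^k\theta\leq 1$, hence
\[
\ln q_n=-\sum_{k=0}^{n-1}\ln\bigl(T^k\theta\bigr)+O(1).
\]

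Finally I would apply Birkhoff's pointwise ergodic theorem to the function $f(x)=-\ln x$, which is non-negative and $\mu$-integrable (indeed $\int_0^1(-\ln x)\,d\mu(x)\leq\frac{1}{\ln 2}\int_0^1(-\ln x)\,dx=\frac{1}{\ln 2}<\infty$). Ergodicity then yields, for $\mu$-almost every (hence Lebesgue-almost every) $\theta$,
\[
\frac{1}{n}\sum_{k=0}^{n-1}\bigl(-\ln T^k\theta\bigr)\longrightarrow\int_0^1(-\ln x)\,d\mu(x)=\frac{1}{\ln 2}\int_0^1\frac{-\ln x}{1+x}\,dx .
\]
The integral is computed by expanding $\frac{1}{1+x}=\sum_{j\geq0}(-1)^jx^j$ and using $\int_0^1(-\ln x)x^j\,dx=\frac{1}{(j+1)^2}$, giving $\int_0^1\frac{-\ln x}{1+x}\,dx=\sum_{j\geq0}\frac{(-1)^j}{(j+1)^2}=\frac{\pi^2}{12}$. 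Dividing the displayed asymptotics for $\ln q_n$ by $n$ and letting $n\to\infty$ then gives $\frac{1}{n}\ln q_n\to\frac{\pi^2}{12\ln 2}$, that is, $q_n(\theta)^{1/n}\to e^{\pi^2/(12\ln 2)}=K$, for Lebesgue-almost every $\theta$. Apart from the ergodicity of the Gauss map, the remaining steps — the continued-fraction recursions, the telescoping product, the integrability check, and the series evaluation $\sum_{j\geq0}(-1)^j/(j+1)^2=\pi^2/12$ — are all routine, so the write-up should be short.
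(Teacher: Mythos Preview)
Your argument is the standard ergodic-theoretic proof and is correct in all its steps: the telescoping identity $|q_{n-1}\theta-p_{n-1}|=\prod_{k=0}^{n-1}T^k\theta$, the two-sided bound giving $\ln q_n=-\sum_{k=0}^{n-1}\ln(T^k\theta)+O(1)$, the application of Birkhoff's theorem to $f(x)=-\ln x\in L^1(\mu)$, and the evaluation $\int_0^1\frac{-\ln x}{1+x}\,dx=\sum_{m\geq1}\frac{(-1)^{m-1}}{m^2}=\frac{\pi^2}{12}$ all check out. The only substantial input you defer is the ergodicity of the Gauss map with respect to the Gauss measure, and citing \textbf{[Khi]} for that is appropriate.

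There is, however, nothing to compare against: the paper does not prove this theorem. It is quoted as a classical result with a reference to Khintchine's book (``we have the following theorem (see for example \textbf{[Khi]})''), and is used only as background to discuss the distribution of the denominators $q_n(\theta)$. So your write-up goes well beyond what the paper itself supplies. As a side remark, your computation also corrects a typographical slip in the paper's statement of the constant: the exponent should be $\pi^{2}/(12\ln 2)$, not $\pi/(12\ln 2)$; the Khintchine--L\'evy constant is $K=e^{\pi^{2}/(12\ln 2)}\approx 3.2758$.
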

Thus for instance from a certain point we obtain :\[
\left(\frac{1}{2}K\right)^{n}\leq q_{n}(\theta)\leq\left(\frac{3}{2}K\right)^{n}.\]
The study of the distribution of the geometrical sequences $\left(\left(\frac{1}{2}K\right)^{n}\right)_{n\in\mathbb{N}}$
and $\left(\left(\frac{3}{2}K\right)^{n}\right)_{n\in\mathbb{N}}$
on the compact set $\left[h^{\min\delta_{i}^{\prime}-1-\mu},h^{1-2\min\delta_{i}+\mu}\right]$
is easy; unfortunately it does not provide accurate informations on
the distribution of the sequence $\left(q_{n}(\theta)\right)_{n\in\mathbb{N}}$
.

\subsubsection*{Open question}

Do we know the denominator distribution of $\left(q_{n}(\theta)\right)_{n\in\mathbb{N}}$
with the real axis depending on $\theta$? More specifically, for
a non-empty compact set of diameter $\delta>0$ includes in $\mathbb{R}_{+}^{*}$
is it possible to estimate the number of elements of the sequence
$\left(q_{n}(\theta)\right)_{n\in\mathbb{N}}$ in this compact set
depending on the numbers $\theta$ and $\delta$ ?

\section{Second order approximation : revival periods}

\subsection{Introduction }

Our next aim is to use a more accurate approximation of the function
$t\mapsto\mathbf{a}(t)$. In this section, we use the quadradic approximation
$\mathbf{a_{2}}(t)$ of the autocorrelation function, valid up on
a time scale $\left[0,1/h^{\beta}\right]$ where $\beta>1$. This
approximation is a consequence of a Taylor formula on the the term
$tF\left(\tau_{n},\mu_{m}\right)/h$ in order 2. In this quadradic
approximation appear three revivals periods $T_{rev_{1}},T_{rev_{2}}$
and $T_{rev_{12}}$ (of order $1/h)$. 
\selectlanguage{english}%
\begin{assumption}
\textup{In this section, we suppose $\frac{\partial^{2}F}{\partial X^{2}}\left(E_{1},E_{2}\right)\neq0,\,\frac{\partial^{2}F}{\partial X\partial Y}\left(E_{1},E_{2}\right)\neq0,\,\frac{\partial^{2}F}{\partial Y^{2}}\left(E_{1},E_{2}\right)\neq0.$}
\end{assumption}
\selectlanguage{english}%

\subsection{Quadradic approximation andc revival periods}

\selectlanguage{english}%

\subsubsection{Semi-classical revival and revival periods}
\selectlanguage{english}%
\begin{defn}
Let us define the semi-classical revival periods $T_{srev_{1}},$\foreignlanguage{english}{
$T_{srev_{2}}$ and }$T_{srev_{12}}$ by :\foreignlanguage{english}{\[
T_{srev{}_{1}}:=\frac{4\pi}{h\frac{\partial^{2}F}{\partial X^{2}}\left(\tau_{n_{0}},\mu_{m_{0}}\right)\omega_{1}^{2}};\, T_{srev_{2}}:=\frac{4\pi}{h\frac{\partial^{2}F}{\partial Y^{2}}\left(\tau_{n_{0}},\mu_{m_{0}}\right)\omega_{2}^{2}};\]
\[
T_{srev{}_{12}}:=\frac{4\pi}{h\frac{\partial^{2}F}{\partial X\partial Y}\left(\tau_{n_{0}},\mu_{m_{0}}\right)\omega_{1}\omega_{2}}.\]
}
\end{defn}
So we get the approximation :
\begin{prop}
Let $\beta$ a real number such that $\beta>1-3\min\delta_{i}$. Then
we have uniformly for $t\in\left[0,h^{\beta}\right]$:\[
e^{+itF\left(\tau_{n_{0}},\mu_{m_{0}}\right)/h}\mathbf{r}(t)\]
\[
={\displaystyle \sum_{n,m\in\mathbb{N}^{2}}\left|a_{n,m}\right|^{2}e^{-2i\pi t\left(\frac{n-n_{0}}{T_{scl_{1}}}+\frac{m-m_{0}}{T_{scl_{2}}}+\frac{(n-n_{0})^{2}}{T_{srev_{1}}}+\frac{(m-m_{0})^{2}}{T_{srev_{2}}}+\frac{(n-n_{0})(m-m_{0)}}{T_{srev_{12}}}\right)}}\]
\[
+O\left(h^{\beta+3\min\delta_{i}-1}\right).\]
\end{prop}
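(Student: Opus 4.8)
The plan is to repeat the scheme of the proof of Proposition 3.3, carrying the Taylor--Lagrange expansion of $F$ one order further. First I would set
\[
\varepsilon(t):=\left|e^{itF(\tau_{n_0},\mu_{m_0})/h}\mathbf{r}(t)-\sum_{n,m\in\mathbb{N}^2}|a_{n,m}|^2\, e^{-2i\pi t\left(\frac{n-n_0}{T_{scl_1}}+\frac{m-m_0}{T_{scl_2}}+\frac{(n-n_0)^2}{T_{srev_1}}+\frac{(m-m_0)^2}{T_{srev_2}}+\frac{(n-n_0)(m-m_0)}{T_{srev_{12}}}\right)}\right|
\]
and observe that, since $\mathbf{r}(t)=\sum|a_{n,m}|^2 e^{-i\frac{t}{h}F(\tau_n,\mu_m)}$, it suffices to control $\frac{t}{h}\bigl(F(\tau_n,\mu_m)-F(\tau_{n_0},\mu_{m_0})\bigr)$. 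Applying the Taylor--Lagrange formula at order $3$ to $F$ around $(\tau_{n_0},\mu_{m_0})$ yields, for each $(n,m)$, a point $\rho_{n,m}$ on the segment between $(\tau_{n_0},\mu_{m_0})$ and $(\tau_n,\mu_m)$ such that $F(\tau_n,\mu_m)$ equals $F(\tau_{n_0},\mu_{m_0})$, plus the first order terms $\frac{2\pi h(n-n_0)}{T_{scl_1}}+\frac{2\pi h(m-m_0)}{T_{scl_2}}$, plus the second order (Hessian) terms which, by the very definitions of $T_{srev_1},T_{srev_2},T_{srev_{12}}$ (in terms of second derivatives of $F$ at $(\tau_{n_0},\mu_{m_0})$), are $\frac{2\pi h(n-n_0)^2}{T_{srev_1}}+\frac{2\pi h(m-m_0)^2}{T_{srev_2}}+\frac{2\pi h(n-n_0)(m-m_0)}{T_{srev_{12}}}$, plus a third order remainder. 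Dividing by $h$ and factoring out the common first and second order phase gives $\varepsilon(t)=\left|\sum_{n,m\in\mathbb{N}^2}|a_{n,m}|^2 e^{-2i\pi t(\cdots)}\bigl[e^{-2i\pi t S_{n,m}(h)}-1\bigr]\right|$, where $S_{n,m}(h)$ is a fixed linear combination of the cubic monomials $(n-n_0)^i(m-m_0)^j$, $i+j=3$, each carrying a factor $h^2$ and a third partial derivative of $F$ evaluated at $\rho_{n,m}$.

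The rest is the bookkeeping of Proposition 3.3. Splitting the sum over $\Delta$ and $\Gamma=\mathbb{N}^2\setminus\Delta$, Lemma 2.7 (applicable exactly as before, using $\delta_i'>\delta_i$) gives $\sum_{n,m\in\Gamma}|a_{n,m}|^2=O(h^\infty)$, so after the triangle inequality only the sum over $\Delta$ matters. For $(n,m)\in\Delta$ one has $|n-n_0|\le h^{\delta_1-1}$ and $|m-m_0|\le h^{\delta_2-1}$, and since $(\tau_{n_0},\mu_{m_0})\to(E_1,E_2)$ the point $\rho_{n,m}$ lies, for $h$ small enough, in a fixed ball $B((E_1,E_2),\varepsilon)$; hence every third partial derivative $\partial^3 F(\rho_{n,m})$ is bounded by a constant independent of $h$, just as the Hessian was bounded in Proposition 3.3. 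A monomial $h^2(n-n_0)^i(m-m_0)^j$ with $i+j=3$ is then $O\bigl(h^{2+i(\delta_1-1)+j(\delta_2-1)}\bigr)=O\bigl(h^{-1+i\delta_1+j\delta_2}\bigr)=O\bigl(h^{3\min\delta_i-1}\bigr)$, so $|S_{n,m}(h)|=O(h^{3\min\delta_i-1})$ uniformly on $\Delta$. Consequently, for $t\in[0,h^\beta]$, $t|S_{n,m}(h)|=O(h^{\beta+3\min\delta_i-1})$, which tends to $0$ since $\beta>1-3\min\delta_i$, whence $e^{-2i\pi t S_{n,m}(h)}-1=O(h^{\beta+3\min\delta_i-1})$ uniformly in $(n,m)\in\Delta$ and $t\in[0,h^\beta]$. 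Collecting the two pieces and using $\sum_{n,m}|a_{n,m}|^2=1+O(h^\infty)$ from Proposition 2.5 gives $\varepsilon(t)=O(h^{\beta+3\min\delta_i-1})$, which is the claim.

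I do not expect a genuine obstacle: the whole argument is a line-by-line upgrade of Proposition 3.3 from first to second order, with the cubic Taylor remainder playing the role of the quadratic remainder $R_{n,m}(h)$ there. The only points requiring real care are the combinatorial factors in the third order multivariate Taylor formula --- in particular checking that the Hessian part of the expansion reproduces exactly the coefficients $2\pi h/T_{srev_1}$, $2\pi h/T_{srev_2}$, $2\pi h/T_{srev_{12}}$ of the statement, the mixed term needing the correct binomial factor --- and the exponent arithmetic that produces the uniform bound $h^{3\min\delta_i-1}$ for $|S_{n,m}(h)|$ on $\Delta$; both are routine.
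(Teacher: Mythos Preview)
Your proposal is correct and follows essentially the same route as the paper's own proof: Taylor--Lagrange at order $3$ around $(\tau_{n_0},\mu_{m_0})$, splitting $\mathbb{N}^2=\Delta\amalg\Gamma$, bounding the third derivatives of $F$ at $\rho_{n,m}$ uniformly on a fixed ball, and estimating each cubic monomial $h^2(n-n_0)^i(m-m_0)^j$ with $i+j=3$ by $h^{i\delta_1+j\delta_2-1}\le h^{3\min\delta_i-1}$ on $\Delta$. The paper's presentation differs only cosmetically (it lists the four cubic estimates separately rather than packaging them into a single $S_{n,m}(h)$), so there is nothing to add.
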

\begin{proof}
The principle is the same as in the proof of proposition 3.3. Here
we use the Taylor-Lagrange formula at order 3 : for all pair $(n,m)\in\mathbb{N}^{2}$
there exists $\theta=\theta\left(n,m,n_{0},m_{0}\right)\in\left]0,1\right[$
such that\[
F\left(\tau_{n},\mu_{m}\right)=F\left(\tau_{n_{0}},\mu_{m_{0}}\right)+\frac{\partial F\left(\tau_{n_{0}},\mu_{m_{0}}\right)}{\partial X}\omega_{1}h\left(n-n_{0}\right)+\frac{\partial F\left(\tau_{n_{0}},\mu_{m_{0}}\right)}{\partial y}\omega_{2}h\left(m-m_{0}\right)\]
\[
+\frac{1}{2}\frac{\partial^{2}F\left(\tau_{n_{0}},\mu_{m_{0}}\right)}{\partial X^{2}}\omega_{1}^{2}h^{2}\left(n-n_{0}\right)^{2}+\frac{1}{2}\frac{\partial^{2}F\left(\tau_{n_{0}},\mu_{m_{0}}\right)}{\partial Y^{2}}\omega_{2}^{2}h^{2}\left(m-m_{0}\right)^{2}\]

\[
+\frac{1}{2}\frac{\partial^{2}F}{\partial X\partial Y}\left(\tau_{n_{0}},\mu_{m_{0}}\right)\omega_{1}\omega_{2}h^{2}\left(n-n_{0}\right)\left(m-m_{0}\right)+\frac{1}{6}\frac{\partial^{3}F\left(\rho_{n,m}\right)}{\partial X^{3}}\omega_{1}^{3}h^{3}\left(n-n_{0}\right)^{3}\]
\[
+\frac{1}{2}\frac{\partial^{3}F\left(\rho_{n,m}\right)}{\partial X^{2}\partial Y}\omega_{1}^{2}\omega_{2}h^{3}\left(n-n_{0}\right)^{2}\left(m-m_{0}\right)+\frac{1}{2}\frac{\partial^{3}F\left(\rho_{n,m}\right)}{\partial X\partial Y^{2}}\omega_{1}\omega_{2}^{2}h^{3}\left(n-n_{0}\right)\left(m-m_{0}\right)^{2}\]
\[
+\frac{1}{6}\frac{\partial^{3}F\left(\rho_{n,m}\right)}{\partial X^{3}}\omega_{2}^{3}h^{3}\left(m-m_{0}\right)^{3},\]
with $\rho_{n,m}=\rho(n,m,n_{0},m_{0},h):=\left(\tau_{n_{0}}+\theta(\tau_{n}-\tau_{n_{0}}),\mu_{m_{0}}+\theta(\mu_{m}-\mu_{m_{0}})\right).$

Next, we observe that for all pair $(n,m)\in\Delta$ and for all $t\in[0,h^{\beta}]$\[
\left|t\left(n-n_{0}\right)^{3}h^{2}\right|\leq h^{\beta+3\delta_{1}-1};\;\left|t\left(n-n_{0}\right)^{2}\left(m-m_{0}\right)h^{2}\right|\leq h^{\beta+2\delta_{1}+\delta_{2}-1};\]
\[
\left|t\left(n-n_{0}\right)\left(m-m_{0}\right)^{2}h^{2}\right|\leq h^{\beta+\delta_{1}+2\delta_{2}-1};\;\left|t\left(m-m_{0}\right)^{3}h^{2}\right|\leq h^{\beta+3\delta_{2}-1};\]
hence, since $\beta>1-\min\left(3\delta_{1},2\delta_{1}+\delta_{2},\delta_{1}+2\delta_{2},3\delta_{2}\right)=1-3\min\delta_{i},$
for all $t\in\left[0,h^{\beta}\right]$ and for all pair $(n,m)\in\mathbb{N}^{2}$
we get \[
e^{-2i\pi t\left(\left(n-n_{0}\right)^{3}h^{2}+\left(n-n_{0}\right)^{2}\left(m-m_{0}\right)h^{2}+\left(n-n_{0}\right)\left(m-m_{0}\right)^{2}h^{2}+\left(m-m_{0}\right)^{3}h^{2}\right)}\]
\[
=1+O\left(h^{\beta-1+3\min\delta_{i}}\right).\]
And the statement of the proposition is established.
\end{proof}
For the same reason as in definition 3.4 we introduce the revival
periods :
\begin{defn}
Let us defines the revival periods $T_{rev_{1}},$\foreignlanguage{english}{
$T_{rev_{2}}$ and }$T_{rev_{12}}$\foreignlanguage{english}{ by :\[
T_{rev_{1}}:=\frac{4\pi}{h\frac{\partial^{2}F}{\partial X^{2}}\left(E_{1},E_{2}\right)\omega_{1}^{2}};\, T_{rev_{2}}:=\frac{4\pi}{h\frac{\partial^{2}F}{\partial Y^{2}}\left(E_{1},E_{2}\right)\omega_{2}^{2}};\]
\[
T_{rev{}_{12}}:=\frac{4\pi}{h\frac{\partial^{2}F}{\partial X\partial Y}\left(E_{1},E_{2}\right)\omega_{1}\omega_{2}}.\]
}
\end{defn}
Clearly for all $j\in\left\{ 1,2,12\right\} $ we have\foreignlanguage{english}{
$\lim_{h\rightarrow0}T_{srev_{j}}/T_{rev_{j}}=1.$} The three semi-classical
periods $T_{srev_{j}}$ depend on $h$ as well as their quotients.
Since we will consider period quotients afterwards, is it preferably
to study revival periods than semi-classical revival periods;for that
we use indeed :
\begin{prop}
Let $\upsilon$ a real number such that $\upsilon>-2\min\delta_{i}$.
Then we have uniformly for $t\in\left[0,h^{\upsilon}\right]$: \[
{\displaystyle {\displaystyle \sum_{n,m\in\mathbb{N}^{2}}\left|a_{n,m}\right|^{2}e^{-2i\pi t\left(\frac{n-n_{0}}{T_{scl_{1}}}+\frac{m-m_{0}}{T_{scl_{2}}}+\frac{(n-n_{0})^{2}}{T_{srev_{1}}}+\frac{(m-m_{0})^{2}}{T_{srev_{2}}}+\frac{(n-n_{0})(m-m_{0)}}{T_{srev_{12}}}\right)}}}\]
\[
={\displaystyle {\displaystyle \sum_{n,m\in\mathbb{N}^{2}}\left|a_{n,m}\right|^{2}e^{-2i\pi t\left(\frac{n-n_{0}}{T_{scl_{1}}}+\frac{m-m_{0}}{T_{scl_{2}}}+\frac{(n-n_{0})^{2}}{T_{rev_{1}}}+\frac{(m-m_{0})^{2}}{T_{rev_{2}}}+\frac{(n-n_{0})(m-m_{0)}}{T_{rev_{12}}}\right)}}}+O\left(h^{\upsilon+\min\left(\delta_{1},\delta_{2}\right)}\right).\]
\end{prop}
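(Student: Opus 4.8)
The plan is to argue exactly as in the proof of Proposition 3.5: one compares the two phases on the localization window $\Delta$ (where $|n-n_0|\le h^{\delta_1-1}$ and $|m-m_0|\le h^{\delta_2-1}$) and discards the complement $\Gamma=\mathbb{N}^2-\Delta$ through the tail bound $\sum_{n,m\in\Gamma}|a_{n,m}|^2=O(h^{\infty})$ (which uses the standing hypothesis $\delta_i'>\delta_i$). In fact the present estimate is a shade easier than that of Proposition 3.5, because by Definitions 4.2 and 4.4 the quantities $1/T_{srev_j}$ and $1/T_{rev_j}$ are, up to the factor $h/(4\pi)$ and a product of the $\omega$'s, simply values of a second-order partial derivative of the polynomial $F$ — at $(\tau_{n_0},\mu_{m_0})$ and at $(E_1,E_2)$ respectively — so no reciprocals occur and the comparison needs no non-vanishing hypothesis (Assumption 4.1 only serves to make the $T_{rev_j}$ finite).

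Write $S_{n,m}$ for the phase carrying the $T_{srev}$'s and $R_{n,m}$ for the phase carrying the $T_{rev}$'s. The linear part $\tfrac{n-n_0}{T_{scl_1}}+\tfrac{m-m_0}{T_{scl_2}}$ is common to both, so
\[
S_{n,m}-R_{n,m}=(n-n_0)^2\Big(\tfrac1{T_{srev_1}}-\tfrac1{T_{rev_1}}\Big)+(m-m_0)^2\Big(\tfrac1{T_{srev_2}}-\tfrac1{T_{rev_2}}\Big)+(n-n_0)(m-m_0)\Big(\tfrac1{T_{srev_{12}}}-\tfrac1{T_{rev_{12}}}\Big).
\]
Using $|e^{ia}-e^{ib}|\le|a-b|$ and splitting the sum over $\Delta$ and $\Gamma$, the difference of the two sides of the asserted identity is bounded by $2\pi t\sum_{(n,m)\in\Delta}|a_{n,m}|^2\,|S_{n,m}-R_{n,m}|+2\sum_{(n,m)\in\Gamma}|a_{n,m}|^2$, and the last term is $O(h^{\infty})$.

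The crux is the estimate $\big|\tfrac1{T_{srev_j}}-\tfrac1{T_{rev_j}}\big|=O(h^{2})$ for $j\in\{1,2,12\}$. Indeed this difference equals $\tfrac{h}{4\pi}$ times a product of $\omega$'s times $\big(\tfrac{\partial^2 F}{\partial X^2}(\tau_{n_0},\mu_{m_0})-\tfrac{\partial^2 F}{\partial X^2}(E_1,E_2)\big)$ when $j=1$, and the analogous expression with the mixed, resp. the $Y$-second, partial for $j=12$, resp. $j=2$. Since $P_i$ has spectral gap $\omega_i h$ and $\tau_{n_0},\mu_{m_0}$ are the eigenvalues closest to $E_1,E_2$, one has $\|(\tau_{n_0},\mu_{m_0})-(E_1,E_2)\|_{\mathbb{R}^2}\le\tfrac h2\sqrt{\omega_1^2+\omega_2^2}$; as $F$ is a polynomial the gradients of its second partials are bounded on $B((E_1,E_2),1)$, and $(\tau_{n_0},\mu_{m_0})$ lies in that ball for $h$ small (it tends to $(E_1,E_2)$), so Lagrange's inequality gives $\big|\tfrac{\partial^2 F}{\partial X^2}(\tau_{n_0},\mu_{m_0})-\tfrac{\partial^2 F}{\partial X^2}(E_1,E_2)\big|=O(h)$, whence the claimed $O(h^2)$ with explicit constants $C_1,C_2,C_{12}>0$ independent of $h$. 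Plugging in $|n-n_0|\le h^{\delta_1-1}$, $|m-m_0|\le h^{\delta_2-1}$ and $t\le h^{\upsilon}$ gives, for $(n,m)\in\Delta$,
\[
t\,|S_{n,m}-R_{n,m}|\le C_1 h^{\upsilon+2\delta_1}+C_2 h^{\upsilon+2\delta_2}+C_{12}h^{\upsilon+\delta_1+\delta_2}\le 3\max(C_1,C_2,C_{12})\,h^{\upsilon+2\min\delta_i},
\]
since $2\delta_1,2\delta_2,\delta_1+\delta_2\ge 2\min\delta_i$. Summing against $\sum_{n,m}|a_{n,m}|^2=1+O(h^{\infty})$ (Proposition 2.5) and adding the $O(h^{\infty})$ from $\Gamma$, the whole difference is $O(h^{\upsilon+2\min\delta_i})$; as $\min\delta_i>0$ this is a fortiori $O(h^{\upsilon+\min\delta_i})$, the stated bound, and the hypothesis $\upsilon>-2\min\delta_i$ is exactly what makes the error genuinely $o(1)$. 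Every estimate above being uniform for $t\in[0,h^{\upsilon}]$, so is the conclusion.

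I do not anticipate any real obstacle here; the only point worth care is to notice that the gain in the difference of inverse revival periods is $O(h^{2})$ rather than merely $O(h)$: on top of the $O(h)$ displacement $\|(\tau_{n_0},\mu_{m_0})-(E_1,E_2)\|$, the quantity $1/T_{srev_j}$ itself carries a factor $h$. It is precisely this second power of $h$ that absorbs the $h^{-2}$ produced by $(n-n_0)^2$ on $\Delta$ and still leaves the safety margin $h^{\upsilon+2\min\delta_i}$.
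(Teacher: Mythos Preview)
Your argument is correct and follows the paper's own proof essentially step for step: split $\mathbb{N}^2=\Delta\amalg\Gamma$, discard $\Gamma$ via Lemma~2.7, bound the phase difference on $\Delta$ using $\bigl|\tfrac{1}{T_{srev_j}}-\tfrac{1}{T_{rev_j}}\bigr|=O(h^{2})$ together with $|n-n_0|\le h^{\delta_1-1}$, $|m-m_0|\le h^{\delta_2-1}$, and arrive at $O(h^{\upsilon+2\min\delta_i})$, which is stronger than the stated $O(h^{\upsilon+\min\delta_i})$.

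The one genuine (if minor) difference is your route to the $O(h^{2})$ bound on $\bigl|\tfrac{1}{T_{srev_j}}-\tfrac{1}{T_{rev_j}}\bigr|$. The paper first estimates $|T_{srev_j}-T_{rev_j}|$, which brings in reciprocals of the second partials of $F$ and therefore leans on Assumption~4.1, and then multiplies by $|1/(T_{srev_j}T_{rev_j})|=O(h^{2})$. You instead observe that $\tfrac{1}{T_{srev_j}}-\tfrac{1}{T_{rev_j}}$ is, up to $h/(4\pi)$ and a product of $\omega$'s, simply a difference of values of a second partial of $F$ at the $O(h)$-close points $(\tau_{n_0},\mu_{m_0})$ and $(E_1,E_2)$, so Lagrange gives the $O(h^{2})$ directly with no reciprocals and no need for Assumption~4.1 at this step. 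This is a clean simplification; the rest is identical to the paper.
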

\begin{proof}
The principle is the same as in the proof of proposition 3.5. With
the partition $\mathbb{N}^{2}=\Delta\amalg\Gamma$ and by triangular
inequality we have \[
\left|\sum_{n,m\in\mathbb{N}^{2}}\left|a_{n,m}\right|^{2}\left[e^{-2i\pi t\left(\frac{n-n_{0}}{T_{scl_{1}}}+\frac{m-m_{0}}{T_{scl_{2}}}+\frac{(n-n_{0})^{2}}{T_{srev_{1}}}+\frac{(m-m_{0})^{2}}{T_{srev_{2}}}+\frac{(n-n_{0})(m-m_{0)}}{T_{srev_{12}}}\right)}\right.\right.\]
\[
\left.\left.e^{-2i\pi t\left(\frac{n-n_{0}}{T_{scl_{1}}}+\frac{m-m_{0}}{T_{scl_{2}}}+\frac{(n-n_{0})^{2}}{T_{rev_{1}}}+\frac{(m-m_{0})^{2}}{T_{rev_{2}}}+\frac{(n-n_{0})(m-m_{0)}}{T_{rev12}}\right)}\right]\right|\]
\[
\leq\sum_{n,m\in\Gamma}2\left|a_{n,m}\right|^{2}\]
\[
+2\sum_{n,m\in\Delta}\left|a_{n,m}\right|^{2}\left[\left|2\pi t(n-n_{0})^{2}\left(\frac{1}{T_{srev_{1}}}-\frac{1}{T_{rev_{1}}}\right)\right|+\left|2\pi t(m-m_{0})^{2}\left(\frac{1}{T_{srev_{2}}}-\frac{1}{T_{rev_{2}}}\right)\right|\right.\]
\[
+\left|2\pi t(n-n_{0})(m-m_{0})\left(\frac{1}{T_{srev_{12}}}-\frac{1}{T_{rev_{12}}}\right)\right|;\]
because $\left|e^{iX}-e^{iY}\right|\leq2\left|X-Y\right|.$ 

Next we observe that\[
T_{srev_{1}}-T_{rev_{1}}=\frac{4\pi}{\omega_{1}^{2}h}\left(\frac{\frac{\partial^{2}F}{\partial X^{2}}\left(E_{1},E_{2}\right)-\frac{\partial^{2}F}{\partial X^{2}}\left(\tau_{n_{0}},\mu_{m_{0}}\right)}{\frac{\partial^{2}F}{\partial X^{2}}\left(\tau_{n_{0}},\mu_{m_{0}}\right)\frac{\partial^{2}F}{\partial X^{2}}\left(E_{1},E_{2}\right)}\right).\]
First we have\[
\left|\frac{\partial^{2}F}{\partial X^{2}}\left(E_{1},E_{2}\right)-\frac{\partial^{2}F}{\partial X^{2}}\left(\tau_{n_{0}},\mu_{m_{0}}\right)\right|\]
\[
\leq\sup_{(x,y)\in B\left((E_{1},E_{2}),1\right)}\left\Vert \nabla\left(\frac{\partial^{2}F}{\partial X^{2}}\right)(x,y)\right\Vert _{\mathbb{R}^{2}}\left\Vert \left(E_{1},E_{2}\right)-\left(\tau_{n_{0}},\mu_{m_{0}}\right)\right\Vert _{\mathbb{R}^{2}}\]
\[
\leq M\sqrt{\left(E_{1}-\tau_{n_{0}}\right)^{2}+\left(E_{2}-\mu_{m_{0}}\right)^{2}}\leq Mh\frac{\sqrt{2}}{2};\]
where $M>0$ is a constant which does not depend on $h.$ 

On the other hand, since we suppose $\frac{\partial^{2}F}{\partial X^{2}}\left(E_{1},E_{2}\right)\neq0;$
there exists $\varepsilon_{1}>0$ and $r_{1}>0$ such that for all$(x,y)\in B\left((E_{1},E_{2}),r_{1}\right)$
we get \foreignlanguage{english}{\[
\left|\frac{\partial^{2}F}{\partial X^{2}}\left(x,y\right)\right|\geq\varepsilon_{1};\]
and we have seen that there exists $h_{1}>0$ such that for all $h\in\left]0,h_{1}\right[$
we have }\[
(\tau_{n_{0}},\mu_{m_{0}})\in B\left((E_{1},E_{2}),r_{1}\right);\]
therefore the application \[
h\mapsto\frac{1}{\frac{\partial^{2}F}{\partial X^{2}}\left(E_{1},E_{2}\right)\frac{\partial^{2}F}{\partial X^{2}}\left(\tau_{n_{0}},\mu_{m_{0}}\right)}\]
is bounded on the open set \foreignlanguage{english}{$\left]0,h_{1}\right[$,
indeed for all $h\in\left]0,h_{1}\right[$ we have \[
\left|\frac{1}{\frac{\partial^{2}F}{\partial X^{2}}\left(E_{1},E_{2}\right)\frac{\partial^{2}F}{\partial X^{2}}\left(\tau_{n_{0}},\mu_{m_{0}}\right)}\right|\leq\frac{1}{\varepsilon_{1}^{2}}<+\infty;\]
and with $M^{\prime}:=2\pi\frac{M\sqrt{2}}{\varepsilon_{1}^{2}}$
for all $h\in\left]0,h_{1}\right[$ we obtain $\left|T_{srev_{1}}-T_{rev_{1}}\right|\leq M^{\prime}.$} 

Next, since\[
\left|\frac{1}{T_{srev_{1}}T_{rev_{1}}}\right|\leq h^{2}\frac{\left|\frac{\partial^{2}F}{\partial X^{2}}\left(E_{1},E_{2}\right)\frac{\partial^{2}F}{\partial X^{2}}\left(\tau_{n_{0}},\mu_{m_{0}}\right)\right|}{16\pi^{2}}\leq Kh^{2}\]
where $K:=\frac{1}{16\pi^{2}}\sup_{(x,y)\in B\left((E_{1},E_{2}),1\right)}\left|\frac{\partial^{2}F}{\partial X^{2}}(x,y)\right|^{2}$,
there exists a constant $C_{1}>0$ (which does not depend on $h$
) such that for all $h\in\left]0,h_{1}\right[$ we have $\left|1/T_{sren_{1}}-1/T_{ren_{1}}\right|\leq C_{1}h^{2}$
(eg. take $C_{1}:=KM$). In a similary way : there exists $C_{2},C_{12}>0$
such that for all $h\in\left]0,h_{2}\right[$ we get $\left|1/T_{sren_{2}}-1/T_{ren_{2}}\right|\leq C_{2}h^{2}$
and for all $h\in\left]0,h_{12}\right[$ we get also $\left|1/T_{sren_{12}}-1/T_{ren_{12}}\right|\leq C_{12}h^{2}.$ 

Next, for all $t\geq0$, for all pair $(n,m)\in\Delta$ and for all
$h<\min\left(h_{1},h_{2},h_{12}\right)$ we have\[
\left|t\left(n-n_{0}\right)^{2}\left(\frac{1}{T_{srev_{1}}}-\frac{1}{T_{rev_{1}}}\right)\right|\leq C_{1}|t|h^{2\delta_{1}};\]
\[
\left|t\left(n-n_{0}\right)\left(m-m_{0}\right)\left(\frac{1}{T_{srev_{12}}}-\frac{1}{T_{rev_{12}}}\right)\right|\leq C_{12}|t|h^{\delta_{1}+\delta_{2}};\]
\[
\left|t\left(m-m_{0}\right)^{2}\left(\frac{1}{T_{srev_{2}}}-\frac{1}{T_{rev_{2}}}\right)\right|\leq C_{2}|t|h^{2\delta_{2}};\]
hence for all $t\in\left[0,h^{\upsilon}\right]$ where $\upsilon$
is a real number such that $\upsilon>-2\min\delta_{i}$ we obtain
\[
\left|t\left(n-n_{0}\right)^{2}\left(\frac{1}{T_{srev_{1}}}-\frac{1}{T_{rev_{1}}}\right)\right|+\left|t\left(m-m_{0}\right)^{2}\left(\frac{1}{T_{srev_{2}}}-\frac{1}{T_{rev_{2}}}\right)\right|\]
\[
+\left|t\left(n-n_{0}\right)\left(m-m_{0}\right)\left(\frac{1}{T_{srev_{12}}}-\frac{1}{T_{rev_{12}}}\right)\right|\leq Mh^{\upsilon+2\min\delta_{i}}\]
where $M:=3\max\left(C_{1},C_{2},C_{12}\right).$ 

So we proove that : there exists a constant $M>0$ such that for all
$h<\min\left(h_{1},h_{2},h_{12}\right)$ and for all $\upsilon>-2\min\delta_{i}$
we get \[
\left|\sum_{n,m\in\mathbb{N}^{2}}\left|a_{n,m}\right|^{2}\left[e^{-2i\pi t\left(\frac{n-n_{0}}{T_{scl_{1}}}+\frac{m-m_{0}}{T_{scl_{2}}}+\frac{(n-n_{0})^{2}}{T_{srev_{1}}}+\frac{(m-m_{0})^{2}}{T_{srev_{2}}}+\frac{(n-n_{0})(m-m_{0)}}{T_{srev_{12}}}\right)}\right.\right.\]
\[
\left.\left.e^{-2i\pi t\left(\frac{n-n_{0}}{T_{scl_{1}}}+\frac{m-m_{0}}{T_{scl_{2}}}+\frac{(n-n_{0})^{2}}{T_{rev_{1}}}+\frac{(m-m_{0})^{2}}{T_{rev_{2}}}+\frac{(n-n_{0})(m-m_{0)}}{T_{rev_{12}}}\right)}\right]\right|\]
\[
\leq2\sum_{n,m\in\Gamma}\left|a_{n,m}\right|^{2}+Mh^{\upsilon+2\min\delta_{i}}\sum_{n,m\in\Delta}\left|a_{n,m}\right|^{2}.\]
It follows from the lemma 2.5 (the lemma 2.5 says $\sum_{n,m\in\Gamma}\left|a_{n,m}\right|^{2}=O\left(h^{\infty}\right)$)
and from \[
\sum_{n,m\in\Delta}\left|a_{n,m}\right|^{2}\leq\sum_{n,m\in\mathbb{N}^{2}}\left|a_{n,m}\right|^{2}=1+O\left(h^{\infty}\right)\]
that \[
\left|\sum_{n,m\in\mathbb{N}^{2}}\left|a_{n,m}\right|^{2}\left[e^{-2i\pi t\left(\frac{n-n_{0}}{T_{scl_{1}}}+\frac{m-m_{0}}{T_{scl_{2}}}+\frac{(n-n_{0})^{2}}{T_{srev_{1}}}+\frac{(m-m_{0})^{2}}{T_{srev_{2}}}+\frac{(n-n_{0})(m-m_{0)}}{T_{srev_{12}}}\right)}\right.\right.\]
\[
\left.\left.e^{-2i\pi t\left(\frac{n-n_{0}}{T_{scl_{1}}}+\frac{m-m_{0}}{T_{scl_{2}}}+\frac{(n-n_{0})^{2}}{T_{rev_{1}}}+\frac{(m-m_{0})^{2}}{T_{rev_{2}}}+\frac{(n-n_{0})(m-m_{0)}}{T_{rev_{12}}}\right)}\right]\right|\]
\[
=O\left(h^{\upsilon+2\min\delta_{i}}\right).\]

\end{proof}

\subsubsection{Comparison between revivals periods an the time scale \foreignlanguage{english}{\textmd{\textup{$\left[0,h^{\alpha}\right]$}}}}

Recall here that the parameters $\left(\delta_{i}^{'},\delta_{i}\right)\in]\frac{1}{2},1[^{2}$
with $\delta_{i}^{'}>\delta_{i}$; recall also that the real coefficients
$\alpha$,$\beta$ would require $\alpha>1-2\min\delta_{i}$ and $\beta>1-3\min\delta_{i}$.
Next we observe that $1-3\min\delta_{i}-\left(1-2\min\delta_{i}\right)=-\min\delta_{i}<0$
and $-2\min\delta_{i}-\left(1-3\min\delta_{i}\right)=\min\delta_{i}-1<0$;
hence for $h$ small enough we obtain 

\[
h^{1-2\min\delta_{i}}<h^{1-3\min\delta_{i}}<h^{-2\min\delta_{i}}.\]
So we can make a {}``good choice'' for parameters $\alpha$, $\beta$
and $\upsilon$ : indeed we can choose $\alpha$, $\beta$ and $\upsilon$
such that : $\upsilon\leq\beta<-1<\alpha<0$, therefore (for $h$
small enough) we have :\[
\left[0,T_{cl_{i}}\right]\subset\left[0,h^{\alpha}\right]\subset\left[0,T_{rev_{i}}\right]\subset\left[0,h^{\beta}\right]\subseteq\left[0,h^{\upsilon}\right].\]

\subsubsection{The quadradic approximation $\mathbf{a_{2}}$}

So, the quadradic approximation of the autocorrelation function on
the time scale $\left[0,h^{\beta}\right]$ is :
\begin{defn}
The quadradic approximation of the autocorrelation function is \[
\mathbf{\mathbf{a_{2}}\,:\,}t\mapsto{\displaystyle \sum_{n,m\in\mathbb{N}^{2}}\left|a_{n,m}\right|^{2}e^{-2i\pi t\left(\frac{n-n_{0}}{T_{scl_{1}}}+\frac{m-m_{0}}{T_{scl_{2}}}+\frac{(n-n_{0})^{2}}{T_{rev_{1}}}+\frac{(m-m_{0})^{2}}{T_{rev_{2}}}+\frac{(n-n_{0})(m-m_{0)}}{T_{rev_{12}}}\right)}.}\]

\end{defn}

\subsection{Revival theorems}

\subsubsection{Preliminaries }

\subsection*{Resonance hypothesis}
\begin{defn}
We say that the revival periods $T_{rev_{1}},\, T_{rev_{2}},\, T_{rev_{12}}$
are in resonance if an only if there exists $\left(\frac{p_{1}}{q_{1}},\frac{p_{2}}{q_{2}},\frac{p_{12}}{q_{12}}\right)\in\mathbb{Q}^{3}$
such that\[
\frac{p_{1}}{q_{1}}T_{rev_{1}}=\frac{p_{2}}{q_{2}}T_{rev_{2}}=\frac{p_{12}}{q_{12}}T_{rev_{12}}.\]
\end{defn}
\begin{notation}
In this case, we introduce the notation $T_{frac}:=\frac{p_{1}}{q_{1}}T_{rev_{1}}=\frac{p_{2}}{q_{2}}T_{rev_{2}}=\frac{p_{12}}{q_{12}}T_{rev_{12}}.$
And for all $j\in\{1,2\}$ let us also consider the numbers $r_{j}:=p_{12}q_{j},\; s_{j}:=q_{12}p_{j}$
, and clearly for all $j\in\{1,2\}$ we have $T_{rev_{j}}=\frac{r_{j}}{s_{j}}T_{rev_{12}}.$
\end{notation}

\subsection*{Preliminaries }

To make progress in our study we need to introduce a new function
$\psi_{cl}$ with two artificial variables $t_{1},t_{2}$.
\begin{defn}
Let us define the pseudo-classical function $\psi_{cl}$ :\[
\psi_{cl}\left(t_{1},t_{2}\right):={\displaystyle \sum_{n,m\in\mathbb{N}^{2}}\left|a_{n,m}\right|^{2}e^{-2i\pi t_{1}\frac{n-n_{0}}{T_{scl_{1}}}-2i\pi t_{2}\frac{m-m_{0}}{T_{scl_{2}}}}.}\]

\end{defn}
So we get the obvious following property; first the function $\psi_{cl}$
is doubly-periodic :

\textbf{(i)} for all pair $t_{1},t_{2}\geq0$ we have $\psi_{cl}\left(t_{1}+T_{scl_{1}},t_{2}\right)={\displaystyle \psi_{cl}\left(t_{1},t_{2}\right);}$

\textbf{(ii)} and for all pair $t_{1},t_{2}\geq0$ we have also $\psi_{cl}\left(t_{1},t_{2}+T_{scl_{2}}\right)={\displaystyle \psi_{cl}\left(t_{1},t_{2}\right).}$

This function have no immediate physical significance, but if the
time $t_{1}$ and $t_{2}$ are equal :

\textbf{(iii)} for all $t\geq0$ we have $\psi_{cl}(t,t)={\displaystyle \mathbf{a}_{\mathbf{1}}(t).}$

\subsection*{Some lemmas}
\begin{notation}
Let us consider the sequence $\left(\theta_{n,m}\right)_{n,m}=\left(\theta_{n,m}(p_{1},q_{1},p_{2},q_{2},p_{12},q_{12},h)\right)_{n,m}$
with $(n,m)\in\mathbb{Z}^{2}$ defined by :\[
\theta_{n,m}:=e^{-2i\pi\left(\frac{p_{1}}{q_{1}}(n-n_{0})^{2}+\frac{p_{12}}{q_{12}}(n-n_{0})(m-m_{0})+\frac{p_{2}}{q_{2}}(m-m_{0})^{2}\right)}.\]

\end{notation}
The periodicity of this sequence is caracterised by the following
easy proposition. 
\begin{prop}
For all $p_{1},q_{1},p_{2},q_{2},p_{12},q_{12}\in\mathbb{Z}$, the
sequence $\left(\theta_{n,m}\right)_{n,m}$ verify \[
\left\{ \begin{array}{cc}
\theta_{n+\ell_{1},m}=\theta_{n,m}\\
\\\theta_{n,m+\ell_{2}}=\theta_{n,m}\end{array}\right.\]
if and only if the integers $\ell_{1}$ and $\ell_{2}$ satisfy the
following equations :\[
\forall(n,m)\in\mathbb{Z}^{2},\;\frac{\ell_{1}^{2}p_{1}}{q_{1}}+\frac{2p_{1}\ell_{1}}{q_{1}}n+\frac{p_{1}r_{1}\ell_{1}}{s_{1}q_{1}}m\equiv0\;[1]\]
\[
\forall(n,m)\in\mathbb{Z}^{2},\;\frac{\ell_{2}^{2}p_{2}}{q_{2}}+\frac{2p_{2}\ell_{2}}{q_{2}}m+\frac{p_{2}r_{2}\ell_{2}}{s_{2}q_{2}}n\equiv0\;[1].\]
\end{prop}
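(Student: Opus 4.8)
The plan is to reduce the claim to an elementary quadratic expansion, using crucially that $\theta_{n,m}$ has modulus one for every $(n,m)$. Writing $N:=n-n_0$, $M:=m-m_0$ and $\Phi(N,M):=\frac{p_1}{q_1}N^2+\frac{p_{12}}{q_{12}}NM+\frac{p_2}{q_2}M^2$ for the phase, the identity $\theta_{n+\ell_1,m}=\theta_{n,m}$ holds exactly when the two exponents differ by an integer multiple of $2i\pi$, i.e. when $\Phi(N+\ell_1,M)-\Phi(N,M)\in\mathbb{Z}$. First I would expand this difference: the $M^2$ term cancels, the cross term contributes $\frac{p_{12}}{q_{12}}\ell_1M$, and the $N^2$ term contributes $\frac{p_1}{q_1}(2\ell_1N+\ell_1^2)$, so that
$$\Phi(N+\ell_1,M)-\Phi(N,M)=\frac{\ell_1^2p_1}{q_1}+\frac{2p_1\ell_1}{q_1}N+\frac{p_{12}}{q_{12}}\ell_1M.$$

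Next I would rewrite the cross-term coefficient through the quantities appearing in the statement. Since $r_1=p_{12}q_1$ and $s_1=q_{12}p_1$ by definition, one has the algebraic identity $\frac{p_1r_1}{q_1s_1}=\frac{p_{12}}{q_{12}}$, so the displayed difference equals $\frac{\ell_1^2p_1}{q_1}+\frac{2p_1\ell_1}{q_1}N+\frac{p_1r_1\ell_1}{s_1q_1}M$. Finally, since $n_0$ and $m_0$ are fixed integers, $(n,m)\mapsto(n-n_0,m-m_0)$ is a bijection of $\mathbb{Z}^2$ onto itself; hence $\theta_{n+\ell_1,m}=\theta_{n,m}$ for every $(n,m)\in\mathbb{Z}^2$ if and only if $\frac{\ell_1^2p_1}{q_1}+\frac{2p_1\ell_1}{q_1}n+\frac{p_1r_1\ell_1}{s_1q_1}m\equiv0\ [1]$ for every $(n,m)\in\mathbb{Z}^2$, which is the first displayed congruence.

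The second periodicity $\theta_{n,m+\ell_2}=\theta_{n,m}$ is handled by the same computation with the two variables exchanged: $\Phi(N,M+\ell_2)-\Phi(N,M)=\frac{\ell_2^2p_2}{q_2}+\frac{2p_2\ell_2}{q_2}M+\frac{p_{12}}{q_{12}}\ell_2N$, and the identity $\frac{p_2r_2}{q_2s_2}=\frac{p_{12}}{q_{12}}$ (from $r_2=p_{12}q_2$, $s_2=q_{12}p_2$) turns this, after the same relabelling of $\mathbb{Z}^2$, into the second displayed congruence. Since the first periodicity involves only $\ell_1$ and the second only $\ell_2$, the conjunction of the two periodicities is equivalent to the conjunction of the two congruences, which is the proposition.

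There is no serious obstacle here; the argument is a quadratic expansion followed by bookkeeping. The only points requiring care are the equivalence between ``$\theta$ is $\ell$-periodic in the given variable'' and ``the increment of its argument lies in $\mathbb{Z}$'' (valid precisely because $|\theta_{n,m}|=1$), and the identity $\frac{p_jr_j}{q_js_j}=\frac{p_{12}}{q_{12}}$, which is what lets the cross-term coefficient be expressed through $p_j,q_j,r_j,s_j$. I would also note that the proposition is an equivalence between universally quantified congruences and does not attempt to solve them for $\ell_1,\ell_2$, so no further reduction of the congruences to divisibility conditions is carried out at this stage.
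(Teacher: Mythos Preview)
Your argument is correct and is precisely the natural one: expand the quadratic phase, use that $|\theta_{n,m}|=1$ so periodicity is equivalent to the phase increment being an integer, and rewrite $\frac{p_{12}}{q_{12}}$ as $\frac{p_jr_j}{q_js_j}$ via the definitions $r_j=p_{12}q_j$, $s_j=q_{12}p_j$. The paper in fact gives no proof at all for this proposition, introducing it as an ``easy proposition'' and passing directly to the example $\ell_1=q_1s_1$, $\ell_2=q_2s_2$; your write-up is exactly the computation one is expected to supply.
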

\begin{example}
An obvious solution is $\ell_{1}=q_{1}s_{1}$ and $\ell_{2}=q_{2}s_{2}.$
\end{example}
For two periods $\ell_{1},\ell_{2}\in\mathbb{Z}^{2}$ let us consider
the set of sequences $\ell_{1},\ell_{2}-$periodic with his natural
scalar product.
\begin{defn}
For a fixed pair $\ell_{1},\ell_{2}$$\in(\mathbb{Z}^{*})^{2}$ we
define $\mathfrak{S}_{\ell_{1},\ell_{2}}(\mathbb{Z})$ the set of
sequences $\ell_{1},\ell_{2}-$periodic in the following sense : \[
\mathfrak{S}_{\ell_{1},\ell_{2}}(\mathbb{Z}):=\left\{ u_{n,m}\in\mathbb{C}^{\mathbb{Z}^{2}};\,\forall n,m\in\mathbb{Z}^{2},\, u_{n+\ell_{1},m}=u_{n,m}\;\; and\;\; u_{n,m+\ell_{2}}=u_{n,m}\right\} .\]

\end{defn}
So we have the elementary :
\begin{prop}
The application\[
\left\langle \,,\,\right\rangle _{\mathfrak{S}_{\ell_{1},\ell_{2}}}:\left\{ \begin{array}{cc}
\mathfrak{S}_{\ell_{1},\ell_{2}}(\mathbb{Z})^{2}\rightarrow\mathbb{C}\\
\\(u,v)\mapsto\left\langle u,v\right\rangle _{\mathfrak{S}_{\ell_{1},\ell_{2}}}:=\frac{1}{|\ell_{1}\ell_{2}|}{\displaystyle \sum_{n=0}^{|\ell_{1}|-1}\sum_{m=0}^{|\ell_{2}|-1}u_{n,m}\overline{v_{n,m}}} & .\end{array}\right.\]
is a Hermitean product on the space $\mathfrak{S}_{\ell_{1},\ell_{2}}(\mathbb{Z})$
.
\end{prop}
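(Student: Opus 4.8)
The plan is to verify directly the three defining properties of a Hermitean product from the explicit formula, exploiting only two facts: the sequences involved are $\ell_{1},\ell_{2}$-periodic, and the double sum runs over exactly one fundamental domain $\{0,\dots,|\ell_{1}|-1\}\times\{0,\dots,|\ell_{2}|-1\}$, so it is a \emph{finite} sum of complex numbers and raises no convergence issue. I would also note in passing that $\mathfrak{S}_{\ell_{1},\ell_{2}}(\mathbb{Z})$ is visibly a $\mathbb{C}$-vector space (closed under addition and scalar multiplication), so that asking whether the given application is a Hermitean product makes sense.

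First I would establish sesquilinearity. For $u,v,w\in\mathfrak{S}_{\ell_{1},\ell_{2}}(\mathbb{Z})$ and $\lambda\in\mathbb{C}$, the termwise identity $(u_{n,m}+\lambda w_{n,m})\overline{v_{n,m}}=u_{n,m}\overline{v_{n,m}}+\lambda\, w_{n,m}\overline{v_{n,m}}$ together with linearity of a finite sum gives $\left\langle u+\lambda w,v\right\rangle_{\mathfrak{S}_{\ell_{1},\ell_{2}}}=\left\langle u,v\right\rangle_{\mathfrak{S}_{\ell_{1},\ell_{2}}}+\lambda\left\langle w,v\right\rangle_{\mathfrak{S}_{\ell_{1},\ell_{2}}}$, and the analogous computation with $\overline{\lambda}$ in the second slot shows conjugate-linearity there. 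For Hermitean symmetry, the termwise identity $\overline{u_{n,m}\overline{v_{n,m}}}=v_{n,m}\overline{u_{n,m}}$, combined with the fact that $|\ell_{1}\ell_{2}|$ is a positive real, yields $\overline{\left\langle u,v\right\rangle_{\mathfrak{S}_{\ell_{1},\ell_{2}}}}=\left\langle v,u\right\rangle_{\mathfrak{S}_{\ell_{1},\ell_{2}}}$ upon conjugating the whole sum.

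The only point requiring a short argument is positive-definiteness. Taking $v=u$ gives $\left\langle u,u\right\rangle_{\mathfrak{S}_{\ell_{1},\ell_{2}}}=\frac{1}{|\ell_{1}\ell_{2}|}\sum_{n=0}^{|\ell_{1}|-1}\sum_{m=0}^{|\ell_{2}|-1}|u_{n,m}|^{2}\ge 0$, a finite sum of non-negative reals. If it vanishes, then $u_{n,m}=0$ for every $(n,m)$ in the fundamental domain. Given an arbitrary $(p,q)\in\mathbb{Z}^{2}$, Euclidean division in each variable lets us write $p=n+k\ell_{1}$, $q=m+j\ell_{2}$ with $(n,m)$ in the fundamental domain and $k,j\in\mathbb{Z}$; iterating the periodicity relations $u_{n+\ell_{1},m}=u_{n,m}$ and $u_{n,m+\ell_{2}}=u_{n,m}$ then gives $u_{p,q}=u_{n,m}=0$, so $u$ is the zero sequence. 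Hence $\left\langle u,u\right\rangle_{\mathfrak{S}_{\ell_{1},\ell_{2}}}=0$ forces $u=0$.

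I do not expect a genuine obstacle here; the statement is elementary. The only thing to be slightly careful about is that a block of size $|\ell_{1}|\times|\ell_{2}|$ meets each residue class modulo $(\ell_{1},\ell_{2})$ exactly once — which is precisely what the Euclidean division argument above records — and this also shows, if one wished to phrase the form invariantly, that its value does not depend on which fundamental domain is summed over. Assembling sesquilinearity, Hermitean symmetry and positive-definiteness completes the proof.
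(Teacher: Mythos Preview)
Your proof is correct; the paper does not supply a proof of this proposition (it is introduced as ``elementary'' and left to the reader), so your direct verification of sesquilinearity, Hermitean symmetry, and positive-definiteness via periodicity and Euclidean division is exactly what is needed to fill the gap.
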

We have also the obvious following remark :
\begin{prop}
Let us consider $\phi_{n,m}^{k,p}:=e^{-\frac{2i\pi kn}{\ell_{1}}}e^{-\frac{2i\pi pm}{\ell_{2}}}$
where $(k,p)\in\mathbb{Z}^{2}$; then the family $\left\{ \left(\phi_{n,m}^{k,p}\right)_{n,m\in\mathbb{Z}^{2}}\right\} _{k=0...\ell_{1}-1,p=0...\ell_{2}-1}$
is an orthonormal basis of the space vector $\mathfrak{S}_{\ell_{1},\ell_{2}}(\mathbb{Z})$.
\end{prop}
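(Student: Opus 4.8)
The plan is to recognize $\left\{\left(\phi_{n,m}^{k,p}\right)_{n,m}\right\}_{k,p}$ as the two-dimensional discrete Fourier basis of $\mathbb{Z}/\ell_{1}\mathbb{Z}\times\mathbb{Z}/\ell_{2}\mathbb{Z}$ and to conclude by a dimension count together with a direct orthonormality computation. Without loss of generality I may assume $\ell_{1},\ell_{2}>0$ (otherwise one replaces $\ell_{i}$ by $|\ell_{i}|$ everywhere, the sums defining $\left\langle\,,\,\right\rangle_{\mathfrak{S}_{\ell_{1},\ell_{2}}}$ being indexed by $|\ell_{i}|$ in any case). First I would record that the restriction map sending $u\in\mathfrak{S}_{\ell_{1},\ell_{2}}(\mathbb{Z})$ to the finite array $(u_{n,m})_{0\leq n<\ell_{1},\,0\leq m<\ell_{2}}$ is a linear isomorphism onto $\mathbb{C}^{\ell_{1}\ell_{2}}$: it is injective because the two periodicity relations determine every value of $u$ from its values on the fundamental domain $\{0,\dots,\ell_{1}-1\}\times\{0,\dots,\ell_{2}-1\}$, and surjective because any array on that domain extends (uniquely) to a doubly periodic sequence. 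Hence $\dim_{\mathbb{C}}\mathfrak{S}_{\ell_{1},\ell_{2}}(\mathbb{Z})=\ell_{1}\ell_{2}$, which is exactly the number of index pairs $(k,p)$ with $0\leq k<\ell_{1}$ and $0\leq p<\ell_{2}$.

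Next I would check that each $\phi^{k,p}$ indeed belongs to $\mathfrak{S}_{\ell_{1},\ell_{2}}(\mathbb{Z})$: since $e^{-2i\pi k(n+\ell_{1})/\ell_{1}}=e^{-2i\pi kn/\ell_{1}}e^{-2i\pi k}=e^{-2i\pi kn/\ell_{1}}$ for $k\in\mathbb{Z}$, one gets $\phi^{k,p}_{n+\ell_{1},m}=\phi^{k,p}_{n,m}$, and in the same way $\phi^{k,p}_{n,m+\ell_{2}}=\phi^{k,p}_{n,m}$. Then comes the core computation: for index pairs $(k,p)$ and $(k',p')$,
\[
\left\langle \phi^{k,p},\phi^{k',p'}\right\rangle_{\mathfrak{S}_{\ell_{1},\ell_{2}}}=\frac{1}{\ell_{1}\ell_{2}}\left(\sum_{n=0}^{\ell_{1}-1}e^{-2i\pi(k-k')n/\ell_{1}}\right)\left(\sum_{m=0}^{\ell_{2}-1}e^{-2i\pi(p-p')m/\ell_{2}}\right),
\]
where I have used $\overline{e^{-2i\pi k'n/\ell_{1}}}=e^{2i\pi k'n/\ell_{1}}$ and factorized the double sum. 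Each factor is a finite geometric sum: $\sum_{n=0}^{\ell_{1}-1}e^{-2i\pi(k-k')n/\ell_{1}}$ equals $\ell_{1}$ when $k=k'$ (every term is $1$), and when $k\neq k'$ it equals
\[
\sum_{n=0}^{\ell_{1}-1}e^{-2i\pi(k-k')n/\ell_{1}}=\frac{1-e^{-2i\pi(k-k')}}{1-e^{-2i\pi(k-k')/\ell_{1}}}=0,
\]
the denominator being nonzero precisely because $k,k'$ range over $\ell_{1}$ consecutive integers, so $0<|k-k'|<\ell_{1}$ and $k-k'$ is never a nonzero multiple of $\ell_{1}$; the numerator vanishes since $k-k'\in\mathbb{Z}$. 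The same holds with $(\ell_{2},p,p')$. Therefore $\left\langle\phi^{k,p},\phi^{k',p'}\right\rangle_{\mathfrak{S}_{\ell_{1},\ell_{2}}}=\delta_{kk'}\delta_{pp'}$, i.e. the family is orthonormal for the Hermitean product introduced above.

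Finally, having exhibited $\ell_{1}\ell_{2}$ pairwise orthonormal — in particular linearly independent — vectors inside a space of dimension $\ell_{1}\ell_{2}$, I conclude that $\left\{\left(\phi_{n,m}^{k,p}\right)_{n,m\in\mathbb{Z}^{2}}\right\}_{k=0\ldots\ell_{1}-1,\,p=0\ldots\ell_{2}-1}$ is an orthonormal basis of $\mathfrak{S}_{\ell_{1},\ell_{2}}(\mathbb{Z})$. There is no serious obstacle in this argument; the only points deserving a line of care are the sign convention on $\ell_{1},\ell_{2}$ and the remark that the denominator $1-e^{-2i\pi(k-k')/\ell_{1}}$ does not vanish, which is exactly why the indices $k,p$ are allowed to run over a complete residue system of consecutive integers rather than over all of $\mathbb{Z}$.
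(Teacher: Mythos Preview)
Your proof is correct and complete. The paper does not actually supply a proof of this proposition---it is introduced as an ``obvious'' remark and stated without argument---so your standard discrete-Fourier-basis computation (dimension count plus orthonormality via geometric sums) is exactly the natural way to fill in the details.
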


\subsubsection{The main theorem}

In the following theorem we show that the function $t\mapsto\mathbf{a}_{\mathbf{2}}(t)$
near the period \foreignlanguage{english}{$T_{frac}$} can be written
as a finite sum of $\psi_{cl}$ with arguments shifted. Indeed we
have :
\begin{thm}
Suppose resonance hypothesis holds; then there exists a family of
$\ell_{1}+\ell_{2}$ complex numbers (depends on $h$) : $\left(c_{k_{1},k_{2}}\right)_{k_{1}\in\left\{ 0...\ell_{1}-1\right\} ,k_{2}\in\left\{ 0...\mathbf{\ell}_{2}-1\right\} }$
where the integers $\ell_{1},\ell_{2}\in\mathbb{Z}^{2}$ are solutions
of equations from proposition 4.11; such that\[
\mathbf{a}_{\mathbf{2}}\left(t+T_{frac}\right)={\displaystyle \sum_{k_{1}=0}^{\ell_{1}-1}\sum_{k_{2}=0}^{\ell_{2}-1}c_{k_{1},k_{2}}\psi_{cl}\left(t+T_{frac}+\frac{k_{1}}{\ell_{1}}T_{scl_{1}},t+T_{frac}+\frac{k_{2}}{\ell_{2}}T_{scl_{2}}\right)}\]
\[
+O\left(h^{\alpha+2\min\delta_{i}-1}\right).\]
holds for all $t\in\left[0,h^{\alpha}\right]$. The numbers $c_{k_{1},k_{2}}$
are called fractionnals coefficients; and for all $k_{1}\in\left\{ 0...\ell_{1}-1\right\} ,\, k_{2}\in\left\{ 0...\ell_{2}-1\right\} $
\[
c_{k_{1},k_{2}}=e^{-\frac{2i\pi k_{1}n_{0}}{\ell_{1}}}e^{-\frac{2i\pi k_{2}m_{0}}{\ell_{2}}}b_{k_{1},k_{2}}\]
with $b_{k_{1},k_{2}}=b_{k_{1},k_{2}}(h)=\left\langle \sigma_{h},\phi^{k_{1},k_{2}}\right\rangle _{\mathfrak{S}_{\ell_{1},\ell_{2}}}.$\end{thm}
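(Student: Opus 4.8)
The plan is to work directly with the exact series defining $\mathbf{a_2}$ (Definition 4.6), substitute $t\mapsto t+T_{frac}$, and use the resonance hypothesis to peel off a doubly periodic factor from the quadratic part of the phase. Writing
\[
Q(n,m):=\frac{(n-n_0)^2}{T_{rev_1}}+\frac{(m-m_0)^2}{T_{rev_2}}+\frac{(n-n_0)(m-m_0)}{T_{rev_{12}}},
\]
Notation 4.8 gives $T_{frac}/T_{rev_1}=p_1/q_1$, $T_{frac}/T_{rev_2}=p_2/q_2$ and $T_{frac}/T_{rev_{12}}=p_{12}/q_{12}$, so that $e^{-2i\pi T_{frac}Q(n,m)}=\theta_{n,m}$ exactly, $\theta_{n,m}$ being the sequence of Notation 4.10. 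Hence $e^{-2i\pi(t+T_{frac})Q(n,m)}=\theta_{n,m}\,e^{-2i\pi tQ(n,m)}$ for every $(n,m)$ and $t$, and therefore
\[
\mathbf{a_2}(t+T_{frac})=\sum_{n,m\in\mathbb{N}^2}|a_{n,m}|^2\,\theta_{n,m}\,e^{-2i\pi(t+T_{frac})\left(\frac{n-n_0}{T_{scl_1}}+\frac{m-m_0}{T_{scl_2}}\right)}e^{-2i\pi t\,Q(n,m)}.
\]

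Next I would discard the residual factor $e^{-2i\pi t\,Q(n,m)}$ on the window $t\in[0,h^{\alpha}]$, repeating verbatim the argument of Proposition 3.3: split $\mathbb{N}^2=\Delta\amalg\Gamma$; on $\Gamma$ the contribution is $O(h^{\infty})$ by Lemma 2.5 (using $|\theta_{n,m}|=1$), while on $\Delta$, since $1/T_{rev_j}=O(h)$, $|n-n_0|\le h^{\delta_1-1}$ and $|m-m_0|\le h^{\delta_2-1}$, one gets $t\,Q(n,m)=O(h^{\alpha+2\min\delta_i-1})$ uniformly, hence $e^{-2i\pi t\,Q(n,m)}-1=O(h^{\alpha+2\min\delta_i-1})$; summing against $|a_{n,m}|^2$ (of total mass $1+O(h^\infty)$) yields
\[
\mathbf{a_2}(t+T_{frac})=\sum_{n,m\in\mathbb{N}^2}|a_{n,m}|^2\,\theta_{n,m}\,e^{-2i\pi(t+T_{frac})\left(\frac{n-n_0}{T_{scl_1}}+\frac{m-m_0}{T_{scl_2}}\right)}+O\left(h^{\alpha+2\min\delta_i-1}\right)
\]
uniformly for $t\in[0,h^{\alpha}]$.

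Then I would expand $\theta$ in Fourier modes. By Proposition 4.11 and Example 4.12, with $\ell_1:=q_1s_1$ and $\ell_2:=q_2s_2$ the sequence $\sigma_h:=(\theta_{n,m})_{n,m\in\mathbb{Z}^2}$ belongs to $\mathfrak{S}_{\ell_1,\ell_2}(\mathbb{Z})$, so Proposition 4.15 gives
\[
\theta_{n,m}=\sum_{k_1=0}^{\ell_1-1}\sum_{k_2=0}^{\ell_2-1}b_{k_1,k_2}\,e^{-2i\pi k_1 n/\ell_1}\,e^{-2i\pi k_2 m/\ell_2},\qquad b_{k_1,k_2}=\langle\sigma_h,\phi^{k_1,k_2}\rangle_{\mathfrak{S}_{\ell_1,\ell_2}}.
\]
Inserting this finite sum into the previous display, interchanging the (finite) $(k_1,k_2)$-sum with the absolutely convergent $(n,m)$-sum, and splitting $e^{-2i\pi k_1 n/\ell_1}=e^{-2i\pi k_1 n_0/\ell_1}e^{-2i\pi k_1(n-n_0)/\ell_1}$ (and likewise in $m$), one recombines the two exponentials in the shifted variables via $\frac{k_1}{\ell_1}+\frac{t+T_{frac}}{T_{scl_1}}=\frac{1}{T_{scl_1}}(t+T_{frac}+\frac{k_1}{\ell_1}T_{scl_1})$, so the inner $(n,m)$-sum is precisely $\psi_{cl}(t+T_{frac}+\frac{k_1}{\ell_1}T_{scl_1},\,t+T_{frac}+\frac{k_2}{\ell_2}T_{scl_2})$ by Definition 4.9. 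Setting $c_{k_1,k_2}:=e^{-2i\pi k_1 n_0/\ell_1}e^{-2i\pi k_2 m_0/\ell_2}b_{k_1,k_2}$ then gives the asserted identity, with the error term inherited from the second step.

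The only genuinely delicate point is the uniform control of the residual quadratic phase in the second step: one must check that $e^{-2i\pi t\,Q(n,m)}-1$ is $O(h^{\alpha+2\min\delta_i-1})$ simultaneously for all $t\in[0,h^{\alpha}]$ and all $(n,m)\in\mathbb{N}^2$, which is exactly what the partition $\Delta\amalg\Gamma$ together with the rapid-decay estimate of Lemma 2.5 provide. Everything else is the purely algebraic verification (Proposition 4.11) that the chosen pair $(\ell_1,\ell_2)$ makes $\theta$ doubly periodic, and the bookkeeping of recombining exponentials to recognize $\psi_{cl}$ at shifted arguments.
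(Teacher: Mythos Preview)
Your proof is correct and follows essentially the same route as the paper: substitute $t\mapsto t+T_{frac}$, recognise $e^{-2i\pi T_{frac}Q(n,m)}=\theta_{n,m}$ via the resonance hypothesis, expand $\theta$ on the orthonormal basis $(\phi^{k_1,k_2})$ of $\mathfrak{S}_{\ell_1,\ell_2}(\mathbb{Z})$, and control the residual factor $e^{-2i\pi tQ(n,m)}-1$ on $[0,h^{\alpha}]$ with the partition $\Delta\amalg\Gamma$. The only cosmetic difference is that you kill the residual quadratic phase \emph{before} the Fourier expansion of $\theta$, whereas the paper carries it along and estimates it at the very end; this changes nothing since $|\theta_{n,m}|=1$ and the Fourier expansion is a finite sum.
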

\begin{proof}
Let us denote the integers $\widetilde{n}:=n-n_{0},\,\widetilde{m}:=m-m_{0}$
and consider the function $\varepsilon(t)$ : \[
\varepsilon(t):=\left|\mathbf{a}_{\mathbf{2}}\left(t+T_{frac}\right)-{\displaystyle \sum_{k_{1}=0}^{\ell_{1}-1}\sum_{k_{2}=0}^{\ell_{2}-1}c_{k_{1},k_{2}}\psi_{cl}\left(t+T_{frac}+\frac{k_{1}}{\ell_{1}}T_{scl_{1}},t+T_{frac}+\frac{k_{2}}{\ell_{2}}T_{scl_{2}}\right)}\right|\]
\[
=\left|\sum_{n,m\in\mathbb{N}^{2}}\left|a_{n,m}\right|^{2}e^{-2i\pi t\frac{\widetilde{n}}{T_{scl_{1}}}}e^{-2i\pi T_{frac}\frac{\widetilde{n}}{T_{scl_{1}}}}e^{-2i\pi t\frac{\widetilde{m}}{T_{scl_{2}}}}e^{-2i\pi T_{frac}\frac{\widetilde{m}}{T_{scl_{2}}}}\right.\]
\[
e^{-2i\pi t\frac{\widetilde{n}^{2}}{T_{rev_{1}}}}e^{-2i\pi\frac{p_{1}\widetilde{n}^{2}}{q_{1}}}e^{-2i\pi t\frac{\widetilde{m}^{2}}{T_{rev_{2}}}}e^{-2i\pi\frac{p_{2}\widetilde{m}^{2}}{q_{2}}}e^{-2i\pi t\frac{\widetilde{n}\widetilde{m}}{T_{rev{}_{12}}}}e^{-2i\pi\frac{p_{12}\widetilde{n}\widetilde{m}}{q_{12}}}\]
\foreignlanguage{english}{\[
\left.-\sum_{k_{1}=0}^{\ell_{1}-1}\sum_{k_{2}=0}^{\ell_{2}-1}c_{k_{1},k_{2}}\psi_{cl}\left(t+T_{frac}+\frac{k_{1}}{\ell_{1}}T_{cl_{1}},t+T_{frac}+\frac{k_{2}}{\ell_{2}}T_{cl_{2}}\right)\right|\]
}\[
=\left|\sum_{n,m\in\mathbb{N}^{2}}\left|a_{n,m}\right|^{2}\theta_{n,m}e^{-2i\pi t\frac{\widetilde{n}}{T_{scl_{1}}}}e^{-2i\pi T_{frac}\frac{\widetilde{n}}{T_{scl_{1}}}}e^{-2i\pi t\frac{\widetilde{m}}{T_{scl_{2}}}}e^{-2i\pi T_{frac}\frac{\widetilde{m}}{T_{scl_{2}}}}\right.\]
\[
e^{-2i\pi t\frac{\widetilde{n}^{2}}{T_{rev_{1}}}}e^{-2i\pi t\frac{\widetilde{m}^{2}}{T_{rev{}_{2}}}}e^{-2i\pi t\frac{\widetilde{n}\widetilde{m}}{T_{rev_{12}}}}\]
\foreignlanguage{english}{\[
\left.-\sum_{k_{1}=0}^{\ell_{1}-1}\sum_{k_{2}=0}^{\ell_{2}-1}c_{k_{1},k_{2}}\psi_{cl}\left(t+T_{frac}+\frac{k_{1}}{\ell_{1}}T_{scl_{1}},t+T_{frac}+\frac{k_{2}}{\ell_{2}}T_{scl_{2}}\right)\right|.\]
}Since the sequence $\left(\theta_{n,m}\right)_{n,m}\in\mathfrak{S}_{\ell_{1},\ell_{2}}(\mathbb{Z})$
with $\ell_{1}=q_{1}s_{1}$ and $\ell_{2}=q_{2}s_{2}$ there exists
a unique decomposition of the sequence \foreignlanguage{english}{$\left(\theta_{n,m}\right)_{n,m}$}
on the basis \textit{$\left\{ \left(\phi_{n,m}^{k,p}\right)_{n,m\in\mathbb{Z}^{2}}\right\} _{k=0...\ell_{1}-1,p=0...\ell_{2}-1}$}
; indeed we have :\[
\theta_{n,m}=\sum_{k_{1}=0}^{\ell_{1}-1}{\displaystyle \sum_{k_{2}=0}^{\ell_{2}-1}b_{k_{1},k_{2}}\phi_{n,m}^{k_{1},k_{2}}}.\]
where $b_{k_{1},k_{2}}=\left\langle \theta,\phi^{k_{1},k_{2}}\right\rangle _{\mathfrak{S}_{\ell_{1},\ell_{2}}}$.
Therefore we get \[
\varepsilon(t)=\left|\sum_{n,m\in\mathbb{N}^{2}}\sum_{k_{1}=0}^{\ell_{1}-1}\sum_{k_{2}=0}^{\ell_{2}-1}\left|a_{n,m}\right|^{2}b_{k_{1},k_{2}}e^{-2i\pi t\frac{\widetilde{n}}{T_{scl_{1}}}}e^{-2i\pi T_{frac}\frac{\widetilde{n}}{T_{scl_{1}}}}e^{-2i\pi t\frac{\widetilde{m}}{T_{scl_{2}}}}e^{-2i\pi T_{frac}\frac{\widetilde{m}}{T_{scl_{2}}}}\right.\]
\[
e^{-2i\pi t\frac{\widetilde{n}^{2}}{T_{rev_{1}}}}e^{-2i\pi t\frac{\widetilde{m}^{2}}{T_{rev_{2}}}}e^{-2i\pi t\frac{\widetilde{n}\widetilde{m}}{T_{rev_{12}}}}\phi_{n,m}^{k_{1},k_{2}}\]
\foreignlanguage{english}{\[
\left.-\sum_{k_{1}=0}^{\ell_{1}-1}\sum_{k_{2}=0}^{\ell_{2}-1}c_{k_{1},k_{2}}\psi_{cl}\left(t+T_{frac}+\frac{k_{1}}{\ell_{1}}T_{scl_{1}},t+T_{frac}+\frac{k_{2}}{\ell_{2}}T_{scl_{2}}\right)\right|\]
}

\[
=\left|\sum_{n,m\in\mathbb{N}^{2}}\sum_{k_{1}=0}^{\ell_{1}-1}\sum_{k_{2}=0}^{\ell_{2}-1}\left|a_{n,m}\right|^{2}b_{k_{1},k_{2}}e^{-2i\pi t\frac{\widetilde{n}}{T_{scl_{1}}}}e^{-2i\pi T_{frac}\frac{\widetilde{n}}{T_{scl_{1}}}}e^{-2i\pi t\frac{\widetilde{m}}{T_{scl_{2}}}}e^{-2i\pi T_{frac}\frac{\widetilde{m}}{T_{scl_{2}}}}\right.\]
\[
e^{-2i\pi t\frac{\widetilde{n}^{2}}{T_{rev_{1}}}}e^{-2i\pi t\frac{\widetilde{m}^{2}}{T_{rev_{2}}}}e^{-2i\pi t\frac{\widetilde{n}\widetilde{m}}{T_{rev_{12}}}}\phi_{n,m}^{k_{1},k_{2}}\]
\foreignlanguage{english}{\[
-\sum_{k_{1}=0}^{\ell_{1}-1}\sum_{k_{2}=0}^{\ell_{2}-1}c_{k_{1},k_{2}}\sum_{n,m\in\mathbb{N}^{2}}\left|a_{n,m}\right|^{2}e^{-2i\pi t\frac{\widetilde{n}}{T_{scl_{1}}}}e^{-2i\pi T_{frac}\frac{\widetilde{n}}{T_{scl_{1}}}}e^{-2i\pi k_{1}\frac{\widetilde{n}}{\ell_{1}}}\]
\[
\left.e^{-2i\pi t\frac{\widetilde{m}}{T_{scl_{2}}}}e^{-2i\pi T_{frac}\frac{\widetilde{m}}{T_{scl_{2}}}}e^{-2i\pi k_{2}\frac{\widetilde{m}}{\ell_{2}}}\right|\]
}\[
=\left|\sum_{n,m\in\mathbb{N}^{2}}\sum_{k_{1}=0}^{\ell_{1}-1}\sum_{k_{2}=0}^{\ell_{2}-1}\left|a_{n,m}\right|^{2}b_{k_{1},k_{2}}e^{-2i\pi t\frac{\widetilde{n}}{T_{scl_{1}}}}e^{-2i\pi T_{frac}\frac{\widetilde{n}}{T_{scl_{1}}}}e^{-2i\pi t\frac{\widetilde{m}}{T_{scl_{2}}}}e^{-2i\pi T_{frac}\frac{\widetilde{m}}{T_{scl_{2}}}}\right.\]
\[
e^{-2i\pi t\frac{\widetilde{n}^{2}}{T_{rev_{1}}}}e^{-2i\pi t\frac{\widetilde{m}^{2}}{T_{rev_{2}}}}e^{-2i\pi t\frac{\widetilde{n}\widetilde{m}}{T_{rev_{12}}}}e^{-\frac{2i\pi k_{1}n}{\ell_{1}}}e^{-\frac{2i\pi k_{2}m}{\ell_{2}}}\]
\foreignlanguage{english}{\[
-\sum_{k_{1}=0}^{\ell_{1}-1}\sum_{k_{2}=0}^{\ell_{2}-1}c_{k_{1},k_{2}}\sum_{n,m\in\mathbb{N}^{2}}\left|a_{n,m}\right|^{2}e^{-2i\pi t\frac{\widetilde{n}}{T_{scl_{1}}}}e^{-2i\pi T_{frac}\frac{\widetilde{n}}{T_{scl_{1}}}}e^{-2i\pi k_{1}\frac{\widetilde{n}}{\ell_{1}}}\]
\[
\left.e^{-2i\pi t\frac{\widetilde{m}}{T_{scl_{2}}}}e^{-2i\pi T_{frac}\frac{\widetilde{m}}{T_{scl_{2}}}}e^{-2i\pi k_{2}\frac{\widetilde{m}}{\ell_{2}}}\right|.\]
}And since $c_{k_{1},k_{2}}e^{-2i\pi k_{1}\frac{\widetilde{n}}{\ell_{1}}}e^{-2i\pi k_{2}\frac{\widetilde{m}}{\ell_{2}}}=b_{k_{1},k_{2}}e^{-\frac{2i\pi k_{1}n}{\ell_{1}}}e^{-\frac{2i\pi k_{2}m}{\ell_{2}}}$
we deduce that 

\[
\varepsilon(t)=\]
\[
\left|\sum_{n,m\in\mathbb{N}^{2}}\sum_{k_{1}=0}^{\ell_{1}-1}\sum_{k_{2}=0}^{\ell_{2}-1}\left|a_{n,m}\right|^{2}b_{k_{1},k_{2}}e^{-2i\pi t\frac{\widetilde{n}}{T_{scl_{1}}}}e^{-2i\pi T_{frac}\frac{\widetilde{n}}{T_{scl_{1}}}}e^{-2i\pi t\frac{\widetilde{m}}{T_{scl_{2}}}}e^{-2i\pi T_{frac}\frac{\widetilde{m}}{T_{scl_{2}}}}e^{-\frac{2i\pi k_{1}n}{\ell_{1}}}e^{-\frac{2i\pi k_{2}m}{\ell_{2}}}\right.\]
\foreignlanguage{english}{\[
\left.\left(-1+e^{-2i\pi t\frac{\widetilde{n}^{2}}{T_{rev{}_{1}}}}e^{-2i\pi t\frac{\widetilde{m}^{2}}{T_{rev_{2}}}}e^{-2i\pi t\frac{\widetilde{n}\widetilde{m}}{T_{rev{}_{12}}}}\right)\right|.\]
}To finish, we use the partition $\mathbb{N}^{2}=\Delta\amalg\Gamma$
and we just consider indices in the set $\Delta$ for the sum. Hence
there exists constants $C_{1},C_{2},C_{12}>0$ which does not depend
on $h$, such that \[
\left|t\frac{(n-n_{0})^{2}}{T_{rev_{1}}}\right|\leq C_{1}h^{\alpha+2\delta_{1}-1};\;\left|t\frac{(m-m_{0})^{2}}{T_{rev_{2}}}\right|\leq C_{2}h^{\alpha+2\delta_{2}-1};\]
\[
\left|t\frac{(n-n_{0})(m-m_{0})}{T_{rev_{12}}}\right|\leq C_{12}h^{\alpha+\delta_{1}+\delta_{2}-1}\]
holds for all pair $(n,m)\in\Delta$ and for all \foreignlanguage{english}{$t\in\left[0,h^{\alpha}\right]$.}

As a consequence for all \foreignlanguage{english}{$t\in\left[0,h^{\alpha}\right]$
and for all pair }$(n,m)\in\Delta$ we get :\[
e^{-2i\pi t\frac{\widetilde{n}^{2}}{T_{rev_{1}}}}e^{-2i\pi t\frac{\widetilde{m}^{2}}{T_{rev_{2}}}}e^{-2i\pi t\frac{\widetilde{n}\widetilde{m}}{T_{rev_{12}}}}-1=O\left(h^{\alpha+2\min\delta_{i}-1}\right).\]

\end{proof}
If we take $t=0$ we obtain :
\begin{cor}
Under the same hypothesis we have\[
\mathbf{a}_{\mathbf{2}}\left(T_{frac}\right)={\displaystyle \sum_{k_{1}=0}^{\ell_{1}-1}\sum_{k_{2}=0}^{\ell_{2}-1}c_{k_{1},k_{2}}\psi_{cl}\left(T_{frac}+\frac{k_{1}}{\ell_{1}}T_{scl_{1}},T_{frac}+\frac{k_{2}}{\ell_{2}}T_{scl_{2}}\right)}.\]

\end{cor}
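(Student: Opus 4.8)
The plan is to read this off directly from Theorem 4.16 by specialising to $t=0$, the only extra point being that the remainder appearing in Theorem 4.16 is in fact \emph{identically zero} at $t=0$, so that one obtains an exact identity rather than one up to $O\!\left(h^{\alpha+2\min\delta_i-1}\right)$.

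First, since the resonance hypothesis is in force, $T_{frac}$, the periods $\ell_1=q_1 s_1$, $\ell_2=q_2 s_2$, and the fractional coefficients $c_{k_1,k_2}$ — hence also $b_{k_1,k_2}=\left\langle \theta,\phi^{k_1,k_2}\right\rangle_{\mathfrak{S}_{\ell_1,\ell_2}}$ — are well defined, and Theorem 4.16 applies on $\left[0,h^\alpha\right]$, which contains $0$. Recall from the proof of that theorem that, with $\widetilde n:=n-n_0$, $\widetilde m:=m-m_0$, and after decomposing the $\mathfrak{S}_{\ell_1,\ell_2}(\mathbb{Z})$-periodic sequence $\left(\theta_{n,m}\right)_{n,m}$ on the orthonormal basis $\left(\phi^{k_1,k_2}\right)$, the quantity
\[
\varepsilon(t):=\left|\mathbf{a}_{\mathbf{2}}\left(t+T_{frac}\right)-\sum_{k_1=0}^{\ell_1-1}\sum_{k_2=0}^{\ell_2-1}c_{k_1,k_2}\,\psi_{cl}\!\left(t+T_{frac}+\frac{k_1}{\ell_1}T_{scl_1},\,t+T_{frac}+\frac{k_2}{\ell_2}T_{scl_2}\right)\right|
\]
was rewritten in the form
\[
\varepsilon(t)=\left|\sum_{n,m\in\mathbb{N}^2}\sum_{k_1=0}^{\ell_1-1}\sum_{k_2=0}^{\ell_2-1}\left|a_{n,m}\right|^2 b_{k_1,k_2}\,U_{n,m}^{k_1,k_2}(t)\left(e^{-2i\pi t\frac{\widetilde n^2}{T_{rev_1}}}e^{-2i\pi t\frac{\widetilde m^2}{T_{rev_2}}}e^{-2i\pi t\frac{\widetilde n\widetilde m}{T_{rev_{12}}}}-1\right)\right|,
\]
where $U_{n,m}^{k_1,k_2}(t)$ collects the factors of modulus $1$ (the $e^{-2i\pi t\widetilde n/T_{scl_1}}$, $e^{-2i\pi T_{frac}\widetilde n/T_{scl_1}}$, $e^{-2i\pi k_1 n/\ell_1}$, and their analogues in $m$), and the bound $O\!\left(h^{\alpha+2\min\delta_i-1}\right)$ came solely from estimating the last parenthesis on $t\in\left[0,h^\alpha\right]$ for $(n,m)\in\Delta$.

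Now take $t=0$. Each of the three revival exponentials becomes $1$, so the last parenthesis is exactly $0$ for \emph{every} pair $(n,m)\in\mathbb{N}^2$; in particular there is no need to separate $\Delta$ from $\Gamma$. Hence $\varepsilon(0)=0$, which is precisely the asserted identity.

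There is essentially no obstacle here: the only thing to observe is that the remainder in Theorem 4.16, being manufactured out of $e^{-2i\pi t(\cdots)}-1$, collapses to zero when $t=0$, so the approximate equality of Theorem 4.16 becomes an exact one at $t=T_{frac}$.
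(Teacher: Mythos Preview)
Your proposal is correct and follows exactly the paper's approach, which is simply to set $t=0$ in Theorem 4.16. In fact you make explicit a point the paper leaves implicit: since the remainder in the proof of Theorem 4.16 is of the form $e^{-2i\pi t(\cdots)}-1$, it vanishes identically at $t=0$, which is why the corollary is stated as an exact equality rather than one up to $O\!\left(h^{\alpha+2\min\delta_i-1}\right)$.
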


\subsection{Explicit values of modulus for revival coefficients}

Our final aim is to compute the modulus of revival coefficients. The
idea is to split the sum \foreignlanguage{english}{$\left|c_{k_{1},k_{2}}\right|$}
in two simple parts. These parts look like that Gauss sums, but in
fact with a little difference. Here we propose a simple way to compute
this sums and we don't use sophisticated theory. Start with a notation
and a remark :
\begin{notation}
For $\ell\geq1$ and for integers  $p$ et $q$ such
that $p\wedge q=1$, let us consider for all integer $k\in\left\{ 0...\ell-1\right\} $
the following sum \[
d_{k}(\ell,p,q)=\frac{1}{\ell}{\displaystyle \sum_{n=0}^{\ell-1}e^{-2i\pi\frac{p}{q}(n-n_{0})^{2}}e^{\frac{2i\pi kn}{\ell}}}.\]

\end{notation}
Therefore for all\textit{ $k\in\left\{ 0...\ell-1\right\} $ }\[
\left|d_{k}(\ell,p,q)\right|=\frac{1}{\ell}\left|{\displaystyle \sum_{m\in\mathbb{Z}/\ell\mathbb{Z}}e^{-2i\pi\frac{p}{q}m^{2}}e^{\frac{2i\pi km}{\ell}}}\right|.\]

\begin{thm}
Suppose resonance hypothesis holds and suppose also $\frac{p_{12}}{q_{12}}\in\mathbb{Z};$
then we obtain :\[
\left|b_{k_{1},k_{2}}\right|^{2}=\left|d_{k_{1}}\left(\ell_{1},p_{1}s_{1},\ell_{1}\right)\right|^{2}\left|d_{k_{2}}\left(\ell_{2},p_{2}s_{2},\ell_{2}\right)\right|^{2}.\]
\end{thm}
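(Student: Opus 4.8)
The plan is to compute $|b_{k_1,k_2}|^2 = \big|\langle \theta, \phi^{k_1,k_2}\rangle_{\mathfrak{S}_{\ell_1,\ell_2}}\big|^2$ directly from the definition of the Hermitean product, and show that under the extra hypothesis $p_{12}/q_{12}\in\mathbb{Z}$ the double sum over $(n,m)\in\{0,\dots,\ell_1-1\}\times\{0,\dots,\ell_2-1\}$ factors into a product of two one-dimensional sums, each of which is (up to relabelling) one of the sums $d_k(\ell,p,q)$ from Notation 4.18. First I would write out
\[
b_{k_1,k_2} = \frac{1}{\ell_1\ell_2}\sum_{n=0}^{\ell_1-1}\sum_{m=0}^{\ell_2-1}
e^{-2i\pi\left(\frac{p_1}{q_1}(n-n_0)^2+\frac{p_{12}}{q_{12}}(n-n_0)(m-m_0)+\frac{p_2}{q_2}(m-m_0)^2\right)}
e^{\frac{2i\pi k_1 n}{\ell_1}}e^{\frac{2i\pi k_2 m}{\ell_2}},
\]
recalling $\ell_1 = q_1 s_1$, $\ell_2 = q_2 s_2$. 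The key observation is that when $\frac{p_{12}}{q_{12}}$ is an integer, the cross term $e^{-2i\pi\frac{p_{12}}{q_{12}}(n-n_0)(m-m_0)}$ equals $1$ for all integers $n,m$, since $(n-n_0)(m-m_0)\in\mathbb{Z}$. Hence the summand splits as a product of a function of $n$ alone and a function of $m$ alone.

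Once the cross term disappears, the sum factors:
\[
b_{k_1,k_2} = \left(\frac{1}{\ell_1}\sum_{n=0}^{\ell_1-1} e^{-2i\pi\frac{p_1}{q_1}(n-n_0)^2}e^{\frac{2i\pi k_1 n}{\ell_1}}\right)\left(\frac{1}{\ell_2}\sum_{m=0}^{\ell_2-1} e^{-2i\pi\frac{p_2}{q_2}(m-m_0)^2}e^{\frac{2i\pi k_2 m}{\ell_2}}\right).
\]
The next step is to match each factor with the normalized form in Notation 4.18, i.e. $d_k(\ell,p,q)=\frac{1}{\ell}\sum_{n=0}^{\ell-1}e^{-2i\pi\frac{p}{q}(n-n_0)^2}e^{\frac{2i\pi kn}{\ell}}$. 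Here I must express $\frac{p_1}{q_1}$ in a form with denominator $\ell_1 = q_1 s_1$: writing $\frac{p_1}{q_1} = \frac{p_1 s_1}{q_1 s_1} = \frac{p_1 s_1}{\ell_1}$, the first factor is exactly $d_{k_1}(\ell_1, p_1 s_1, \ell_1)$; likewise the second is $d_{k_2}(\ell_2, p_2 s_2, \ell_2)$. Taking moduli and squaring gives $|b_{k_1,k_2}|^2 = |d_{k_1}(\ell_1,p_1s_1,\ell_1)|^2 |d_{k_2}(\ell_2,p_2s_2,\ell_2)|^2$, which is the claim.

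The main obstacle — really the only nontrivial point — is bookkeeping with the denominators: one must check that $p_1 s_1/\ell_1$ genuinely represents $p_1/q_1$ and that the resulting exponential sum over $n=0,\dots,\ell_1-1$ has period compatible with $\ell_1$, so that the indexing in $d_{k_1}(\ell_1,\cdot,\cdot)$ is legitimate (the exponent $\frac{p_1 s_1}{\ell_1}(n-n_0)^2$ together with the phase $e^{2i\pi k_1 n/\ell_1}$ is $\ell_1$-periodic in $n$ precisely because $\theta\in\mathfrak{S}_{\ell_1,\ell_2}(\mathbb{Z})$, which is guaranteed by Proposition 4.11 and Example 4.12). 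I would also note in passing that the hypothesis $p\wedge q=1$ demanded in Notation 4.18 is not actually needed for this identity — only for any subsequent closed-form evaluation of $d_k$ — so I would either verify it holds here or remark that the sum $d_k$ makes sense regardless. Everything else is a one-line application of the factorization, so the proof is short.
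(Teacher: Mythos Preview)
Your argument is correct and in fact more direct than the paper's own proof. You observe at the outset that the hypothesis $\tfrac{p_{12}}{q_{12}}\in\mathbb{Z}$ makes the cross factor $e^{-2i\pi\frac{p_{12}}{q_{12}}(n-n_0)(m-m_0)}$ identically $1$, so that $b_{k_1,k_2}$ itself factors as a product of two one-dimensional sums; the identification $\tfrac{p_j}{q_j}=\tfrac{p_j s_j}{\ell_j}$ then matches each factor to $d_{k_j}(\ell_j,p_j s_j,\ell_j)$, and squaring moduli finishes.

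The paper takes a longer route: it first shifts the summation variable and rewrites everything through the characters $\chi_j:\mathbb{Z}/\ell_j\mathbb{Z}\to\mathbb{C}^\ast$, then expands $|b_{k_1,k_2}|^2$ into a quadruple sum over $(x,z,y,t)\in(\mathbb{Z}/\ell_1\mathbb{Z})^2\times(\mathbb{Z}/\ell_2\mathbb{Z})^2$, and only at that stage invokes $\tfrac{p_{12}}{q_{12}}\in\mathbb{Z}$ in the form $\ell_1\mid q_1p_1p_{12}$ to conclude $\chi_1\bigl(p_{12}q_1p_1(xy-zt)\bigr)=1$, after which the quadruple sum separates. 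Both approaches hinge on exactly the same arithmetic fact, but your version applies it one level earlier (to $b$ rather than $|b|^2$), avoiding the character machinery entirely. Your side remark that the coprimality condition in Notation~4.18 is irrelevant for the identity itself, only for the later evaluation of $|d_k|^2$, is also well taken.
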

\begin{proof}
For all $k_{1}\in\left\{ 0...\ell_{1}-1\right\} ,\, k_{2}\in\left\{ 0...\ell_{2}-1\right\} $
we have \[
\left|b_{k_{1},k_{2}}\right|=\frac{1}{\ell_{1}\ell_{2}}\left|{\displaystyle \sum_{(n,m)\in\mathbb{Z}/\ell_{1}\mathbb{Z}\times\mathbb{Z}/\ell_{2}\mathbb{Z}}e^{-2i\pi\left(\frac{p_{1}}{q_{1}}n^{2}+\frac{p_{12}}{q_{12}}nm+\frac{p_{2}}{q_{2}}m^{2}\right)}e^{\frac{2i\pi k_{1}n}{\ell_{1}}}e^{\frac{2i\pi k_{2}m}{\ell_{2}}}}\right|;\]
\[
=\frac{1}{\ell_{1}\ell_{2}}\left|{\displaystyle \sum_{m=0}^{\ell_{2}-1}e^{\frac{2i\pi k_{2}m}{\ell_{2}}}e^{-2i\pi\frac{p_{2}}{q_{2}}m^{2}}\sum_{n=0}^{\ell_{1}-1}e^{-2i\pi\frac{p_{1}}{q_{1}}n^{2}}e^{\frac{2i\pi n}{\ell_{1}}\left(k_{1}-\frac{p_{12}}{q_{12}}\ell_{1}m\right)}}\right|\]
and since $\ell_{1}=q_{1}s_{1}=q_{1}q_{12}p_{1}$ we obtain \[
\left|b_{k_{1},k_{2}}\right|=\frac{1}{\ell_{1}\ell_{2}}\left|{\displaystyle \sum_{m=0}^{\ell_{2}-1}e^{\frac{2i\pi k_{2}m}{\ell_{2}}}e^{-2i\pi\frac{p_{2}}{q_{2}}m^{2}}\sum_{n=0}^{\ell_{1}-1}e^{-2i\pi\frac{p_{1}}{q_{1}}n^{2}}e^{\frac{2i\pi n}{\ell_{1}}\left(k_{1}-p_{12}q_{1}s_{1}m\right)}}\right|.\]
For $j\in\{1,2\},$ let us consider $\chi_{j}$ the following characters
: \[
\chi_{j}:\left\{ \begin{array}{cc}
\mathbb{Z}/\ell_{j}\mathbb{Z}\hookrightarrow\mathbb{C}^{*}\\
\\a\mapsto e^{-2i\pi\frac{a}{\ell_{j}}};\end{array}\right.\]
as a consequence we get\[
\left|b_{k_{1},k_{2}}\right|^{2}=\frac{1}{\ell_{1}^{2}\ell_{2}^{2}}{\displaystyle \sum_{(x,y)\in\mathbb{Z}/\ell_{1}\mathbb{Z}\times\mathbb{Z}/\ell_{2}\mathbb{Z}}\chi_{1}\left(p_{1}s_{1}x^{2}-x(k_{1}-p_{12}q_{1}p_{1}y\right)\chi_{2}\left(p_{2}s_{2}y^{2}-k_{2}y\right)}\]
\foreignlanguage{english}{\[
{\displaystyle \sum_{(z,t)\in\mathbb{Z}/\ell_{1}\mathbb{Z}\times\mathbb{Z}/\ell_{2}\mathbb{Z}}\chi_{1}\left(-p_{1}s_{1}z^{2}+z(k_{1}-p_{12}q_{1}p_{1}t\right)\chi_{2}\left(-p_{2}s_{2}t^{2}+k_{2}t\right)}\]
}\[
=\frac{1}{\ell_{1}^{2}\ell_{2}^{2}}\sum_{\left((x,z),(y,t)\right)\in(\mathbb{Z}/\ell_{1}\mathbb{Z})^{2}\times(\mathbb{Z}/\ell_{2}\mathbb{Z})^{2}}\left(\chi_{1}\left(p_{1}s_{1}\left(x^{2}-z^{2}\right)-k_{1}(x-z)+p_{12}q_{1}p_{1}(xy-zt)\right)\right.\]
\[
\left.{\displaystyle \chi_{2}\left(p_{2}s_{2}\left(y^{2}-t^{2}\right)-k_{2}(y-t)\right)}\right).\]
Now, because $\frac{p_{12}}{q_{12}}\in\mathbb{Z}$ then $\ell_{1}=q_{1}q_{12}p_{1}|q_{1}p_{1}p_{12}$
hence for all $x,y,z,t$ we have $p_{12}q_{1}p_{1}(xy-zt)\in\ell_{1}\mathbb{Z}$
. Therefore for all $x,y,z,t$ we have \foreignlanguage{english}{$\chi_{1}\left(p_{12}q_{1}p_{1}(xy-zt)\right)=1.$}
Hence\[
\left|b_{k_{1},k_{2}}\right|^{2}=\]
\[
\frac{1}{\ell_{1}^{2}\ell_{2}^{2}}\sum_{\left((x,z),(y,t)\right)\in(\mathbb{Z}/\ell_{1}\mathbb{Z})^{2}\times(\mathbb{Z}/\ell_{2}\mathbb{Z})^{2}}\chi_{1}\left(p_{1}s_{1}\left(x^{2}-z^{2}\right)-k_{1}(x-z)\right){\displaystyle \chi_{2}\left(p_{2}s_{2}\left(y^{2}-t^{2}\right)-k_{2}(y-t)\right)}\]
\[
=\frac{1}{\ell_{1}^{2}\ell_{2}^{2}}\sum_{(x,z)\in(\mathbb{Z}/\ell_{1}\mathbb{Z})^{2}}\chi_{1}\left(p_{1}s_{1}\left(x^{2}-z^{2}\right)-k_{1}(x-z)\right)\sum_{(y,t)\in(\mathbb{Z}/\ell_{2}\mathbb{Z})^{2}}\chi_{2}\left(p_{2}s_{2}\left(y^{2}-t^{2}\right)-k_{2}(y-t)\right)\]
\foreignlanguage{english}{\[
=\frac{1}{\ell_{1}^{2}}\left|{\displaystyle \sum_{x\in\mathbb{Z}/\ell_{1}\mathbb{Z}}e^{-2i\pi\frac{p_{1}s_{1}}{\ell_{1}}x^{2}}e^{\frac{2i\pi k_{1}}{\ell_{1}}x}}\right|^{2}\frac{1}{\ell_{2}^{2}}\left|{\displaystyle \sum_{y\in\mathbb{Z}/\ell_{2}\mathbb{Z}}e^{-2i\pi\frac{p_{2}s_{2}}{\ell_{2}}y^{2}}e^{\frac{2i\pi k_{2}}{\ell_{2}}y}}\right|^{2}.\]
}
\end{proof}
To finish we can compute \[
\left|d_{k_{1}}\left(\ell_{1},p_{1}s_{1},\ell_{1}\right)\right|^{2}\left|d_{k_{2}}\left(\ell_{2},p_{2}s_{2},\ell_{2}\right)\right|^{2}.\]
with the following results (see for example\textbf{ {[}Lab2{]}}) :
\begin{prop}
For all pair $p,q$ with $p\wedge q=1$ and $q$ odd, then for all
$k\in\left\{ 0...q-1\right\} $ we get :\[
\left|d_{k}(q,p,q)\right|^{2}=\frac{1}{q}.\]

\end{prop}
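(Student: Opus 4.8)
The plan is to recognise $d_{k}(q,p,q)$ as a normalised quadratic Gauss sum and to evaluate $\left|d_{k}(q,p,q)\right|^{2}$ by expanding it and performing a change of summation variables. I would start from the identity recorded just above the statement, which for $\ell=q$ reads $\left|d_{k}(q,p,q)\right|=\frac{1}{q}\left|\sum_{m\in\mathbb{Z}/q\mathbb{Z}}e^{-2i\pi\frac{p}{q}m^{2}}e^{\frac{2i\pi km}{q}}\right|$. Squaring and writing the modulus squared as a double sum gives $q^{2}\left|d_{k}(q,p,q)\right|^{2}=\sum_{(m,n)\in(\mathbb{Z}/q\mathbb{Z})^{2}}e^{\frac{2i\pi}{q}\left(-p(m^{2}-n^{2})+k(m-n)\right)}$, so the problem is reduced to showing this double sum equals $q$.

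The next step is the substitution $(m,n)\mapsto(u,v):=(m-n,m+n)$. Here the hypothesis that $q$ is odd is used: since $2$ is then invertible modulo $q$, this map is a bijection of $(\mathbb{Z}/q\mathbb{Z})^{2}$ onto itself. Using $m^{2}-n^{2}=uv$ and $m-n=u$, the double sum becomes $\sum_{(u,v)\in(\mathbb{Z}/q\mathbb{Z})^{2}}e^{\frac{2i\pi}{q}\left(-puv+ku\right)}=\sum_{u\in\mathbb{Z}/q\mathbb{Z}}e^{\frac{2i\pi ku}{q}}\sum_{v\in\mathbb{Z}/q\mathbb{Z}}e^{-\frac{2i\pi puv}{q}}$, which in particular no longer depends on $k$ in any essential way.

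Then I would invoke orthogonality of the additive characters of $\mathbb{Z}/q\mathbb{Z}$: the inner sum $\sum_{v\in\mathbb{Z}/q\mathbb{Z}}e^{-2i\pi(pu)v/q}$ equals $q$ if $pu\equiv0\;[q]$ and $0$ otherwise. Because $p\wedge q=1$, the condition $pu\equiv0\;[q]$ holds exactly when $u\equiv0\;[q]$, so only the term $u=0$ survives and the whole double sum equals $q$. Combining with the first paragraph yields $q^{2}\left|d_{k}(q,p,q)\right|^{2}=q$, that is $\left|d_{k}(q,p,q)\right|^{2}=\frac{1}{q}$ for every $k\in\{0,\dots,q-1\}$.

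There is no real obstacle here: the argument is the classical evaluation $\left|\sum_{m\in\mathbb{Z}/q\mathbb{Z}}e^{2i\pi am^{2}/q}\right|^{2}=q$ for $q$ odd and $a\wedge q=1$, the linear term $ku$ being harmless because it is carried along unchanged by the bijective substitution. The only points that genuinely require the hypotheses are the invertibility of $(m,n)\mapsto(m-n,m+n)$ (needing $q$ odd) and the final character-orthogonality step (needing $p\wedge q=1$); everything else is a routine rearrangement. One could alternatively complete the square in $m$ first, reducing to the pure Gauss sum with $k$ removed and then applying the same orthogonality trick, but the direct double-sum computation above is shortest.
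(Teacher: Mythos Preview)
Your proof is correct: the substitution $(m,n)\mapsto(m-n,m+n)$ is indeed a bijection of $(\mathbb{Z}/q\mathbb{Z})^{2}$ when $q$ is odd, the factorisation $m^{2}-n^{2}=uv$ separates the double sum, and the orthogonality of additive characters together with $p\wedge q=1$ leaves only the contribution $u=0$, yielding exactly $q$.

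As for comparison with the paper: the paper does not actually supply a proof of this proposition. It is stated as a known result and the reader is referred to \textbf{[Lab2]} (the author's thesis) for details. Your argument is the classical direct computation of the squared modulus of a quadratic Gauss sum with a linear twist, and is presumably close in spirit to what \textbf{[Lab2]} contains, since this is the standard route. There is nothing to correct or to compare further.
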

And
\begin{prop}
For all pair $p,q$ with $p\wedge q=1$ and $q$ even, then for all
$k\in\left\{ 0...q-1\right\} $ we get :\[
if\;\frac{q}{2}\; is\; even\; then\;\left|d_{k}(q,p,q)\right|^{2}=\left\{ \begin{array}{cc}
\frac{2}{q}\;\; if\; k\; is\; even\\
\\0\;\; else;\end{array}\right.\]
\[
if\:\frac{q}{2}\; is\: odd\; then\;\left|d_{k}(q,p,q)\right|^{2}=\left\{ \begin{array}{cc}
0\;\; if\; k\; is\; pair\\
\\\frac{2}{q}\; else.\end{array}\right.\]
\end{prop}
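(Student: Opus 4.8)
The plan is to reduce the statement to the evaluation of a quadratic Gauss sum modulo $q$. By the identity recorded just after the definition of $d_{k}(\ell,p,q)$ (taken with $\ell=q$),
\[
\left|d_{k}(q,p,q)\right|=\frac{1}{q}\left|\sum_{m\in\mathbb{Z}/q\mathbb{Z}}e^{-2i\pi\frac{p}{q}m^{2}}e^{\frac{2i\pi km}{q}}\right|,
\]
so it suffices to compute $|G_{k}|^{2}$ where $G_{k}:=\sum_{m\in\mathbb{Z}/q\mathbb{Z}}e^{-2i\pi(pm^{2}-km)/q}$; then $\left|d_{k}(q,p,q)\right|^{2}=|G_{k}|^{2}/q^{2}$, and the announced value $2/q$ will correspond to $|G_{k}|^{2}=2q$, while $0$ corresponds to $|G_{k}|^{2}=0$.

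First I would expand $|G_{k}|^{2}=G_{k}\overline{G_{k}}=\sum_{m,m'\in\mathbb{Z}/q\mathbb{Z}}e^{-2i\pi(p(m^{2}-m'^{2})-k(m-m'))/q}$ and substitute $m=m'+r$, noting that $(r,m')$ again runs bijectively over $(\mathbb{Z}/q\mathbb{Z})^{2}$. Using $m^{2}-m'^{2}=2m'r+r^{2}$ the double sum factorises as
\[
|G_{k}|^{2}=\sum_{r\in\mathbb{Z}/q\mathbb{Z}}e^{-2i\pi(pr^{2}-kr)/q}\sum_{m'\in\mathbb{Z}/q\mathbb{Z}}e^{-4i\pi prm'/q}.
\]
The inner sum is a geometric (character) sum equal to $q$ when $q\mid 2pr$ and to $0$ otherwise; since $p\wedge q=1$ and $q$ is even, $q\mid 2pr\iff q\mid 2r\iff\frac{q}{2}\mid r$, so among $r\in\{0,\dots,q-1\}$ only $r=0$ and $r=\frac{q}{2}$ survive. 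Evaluating the two surviving terms gives
\[
|G_{k}|^{2}=q\left(1+e^{-2i\pi\left(p(q/2)^{2}-k(q/2)\right)/q}\right)=q\left(1+(-1)^{k}e^{-i\pi pq/2}\right).
\]

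The final step is the parity analysis of the sign $e^{-i\pi pq/2}$. If $\frac{q}{2}$ is even then $4\mid q$, hence $\frac{pq}{2}=p\cdot\frac{q}{2}$ is even and $e^{-i\pi pq/2}=1$, so $|G_{k}|^{2}=q(1+(-1)^{k})$, which equals $2q$ for $k$ even and $0$ for $k$ odd. If $\frac{q}{2}$ is odd then $q\equiv2\;[4]$ and, since $p\wedge q=1$ with $q$ even forces $p$ odd, $\frac{pq}{2}=p\cdot\frac{q}{2}$ is odd and $e^{-i\pi pq/2}=-1$, so $|G_{k}|^{2}=q(1-(-1)^{k})$, which equals $0$ for $k$ even and $2q$ for $k$ odd. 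Dividing by $q^{2}$ produces exactly the two tables claimed. I expect the only genuinely delicate point to be this parity bookkeeping — in particular the observation that $p\wedge q=1$ together with $q$ even forces $p$ odd, which is precisely what distinguishes the two cases — whereas the orthogonality step in the middle is entirely routine; it is also worth checking the degenerate case $q=2$ (where $r=\frac{q}{2}=1$ is the only nonzero surviving index) directly against the stated answer $\left|d_{k}(2,p,2)\right|^{2}\in\{0,1\}$.
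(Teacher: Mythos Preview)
Your argument is correct. The substitution $m=m'+r$ followed by orthogonality of additive characters is the standard way to evaluate the squared modulus of a quadratic Gauss sum, and your parity bookkeeping (in particular the observation that $p\wedge q=1$ with $q$ even forces $p$ odd, so that $p\cdot\tfrac{q}{2}$ is odd exactly when $\tfrac{q}{2}$ is odd) is exactly what is needed to separate the two cases.

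The paper itself does not prove this proposition in the text: it merely states the result and refers the reader to \textbf{[Lab2]} (the author's thesis). Your computation is therefore not comparable to a proof in the paper, but it is the natural self-contained argument one would expect, and it supplies what the paper outsources. The only minor cosmetic remark is that your closing comment about checking $q=2$ separately is unnecessary: your derivation already covers it, since for $q=2$ the surviving indices are $r=0$ and $r=1=\tfrac{q}{2}$, and the formula $|G_k|^2=q(1-(-1)^k)$ gives $|d_0|^2=0$, $|d_1|^2=1$ as stated.
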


\vspace{1cm}

\hspace{-0.5cm}\textbf{\large Olivier Lablée}{\large \par}

\hspace{-0.5cm}

\hspace{-0.5cm}Université Grenoble 1-CNRS

\hspace{-0.5cm}Institut Fourier

\hspace{-0.5cm}UFR de Mathématiques

\hspace{-0.5cm}UMR 5582

\hspace{-0.5cm}BP 74 38402 Saint Martin d'Hères 

\hspace{-0.5cm}mail: \textcolor{blue}{olivier.lablee@ac-grenoble.fr}

\hspace{-0.5cm}http://www-fourier.ujf-grenoble.fr/\textasciitilde{}lablee/
\end{document}